\newcounter{relctr} 
\everydisplay\expandafter{\the\everydisplay\setcounter{relctr}{0}} 
\newcommand\labelrel[2]{%
  \begingroup
    \refstepcounter{relctr}%
    \stackrel{\textnormal{(\alph{relctr})}}{\mathstrut{#1}}%
    \originallabel{#2}%
  \endgroup
}
\numberwithin{equation}{section}
\newtheorem{dfn}{Definition}[section]
\newtheorem{thm}[dfn]{Theorem}
\newtheorem{lma}[dfn]{Lemma}
\newtheorem{ppsn}[dfn]{Proposition}
\newtheorem{crl}[dfn]{Corollary}
\newtheorem{rmrk}[dfn]{Remark}
\newcommand{\cir}{\mathbb{T}}
\newcommand{\R}{\mathbb{R}}
\newcommand{\C}{\mathbb{C}}
\newcommand{\N}{\mathbb{N}}	
\newcommand{\hilh}{\mathcal{H}}
\newcommand{\bh}{\mathcal{B}(\hilh)}
\newcommand{\Sp}{\mathcal{S}}
\newcommand{\Tr}{\operatorname{Tr}}
\begin{document}
	
\title[Functions of unitaries for non continuously differentiable functions]{Functions of unitaries with $\mathcal{S}^p$-perturbations for non continuously differentiable functions}

\author[C. Coine]{Cl\'ement Coine}
\address{Normandie Univ, UNICAEN, CNRS, LMNO, 14000 Caen, France}	
\email{clement.coine@unicaen.fr}

\subjclass[2010]{47B49, 47B10, 46L52, 47A55}
	
\keywords{Multiple operator integrals, Differentiation of operator functions}

\begin{abstract}
Consider a function $f : \mathbb{T} \to \mathbb{C}$, $n$-times differentiable on $\mathbb{T}$ and such that its $n$th derivative $f^{(n)}$ is bounded but not necessarily continuous. Let $U : \mathbb{R} \to \mathcal{U}(\hilh)$ be a function taking values in the set of unitary operators on some separable Hilbert space $\hilh$. Let $1<p<\infty$ and let $\mathcal{S}^p(\hilh)$ be the Schatten class of order $p$ on $\hilh$. If $\tilde{U}:t\in\R\mapsto U(t)-U(0)$ is $n$-times $\Sp^{p}$-differentiable on $\mathbb{R}$, we show that the operator valued function $\varphi : t\in \mathbb{R} \mapsto f(U(t)) - f(U(0)) \in \mathcal{S}^p(\hilh)$ is $n$-times differentiable on $\mathbb{R}$ as well. This theorem is optimal and extends several results related to the differentiability of functions of unitaries. The derivatives of $\varphi$ are given in terms of multiple operator integrals and a formula and $\mathcal{S}^p$-estimates for the Taylor remainders of $\varphi$ are provided.
\end{abstract}
\maketitle


\section{Introduction}

Let $\hilh$ be a complex separable Hilbert space. Let $\mathcal{B}(\hilh)$ denote the Banach space of bounded operators on $\hilh$, and let $\mathcal{U}(\hilh)$ be the subset of unitary operators. For any $1<p<\infty$, $\mathcal{S}^p(\hilh)$ will denote the Schatten class of order $p$ on $\hilh$, that is the Banach space defined by
\begin{align*}
\Sp^{p}(\hilh)=\left\{A\in\bh \mid \|A\|_{p}:=\Tr(|A|^{p})^{1/p}<\infty\right\}.
\end{align*}
The study of differentiability of operator functions was initiated in \cite{DaKr56}. Since then, it has attracted a lot of attention and significant refinements have been obtained in \cite{ArBaFr90, AzCaDoSu09, BiSo3, DePSu04, KiSh04, Pe00, Pe06, St77}. This study has often been motivated by problems in perturbation theory. For instance, the various and fruitful efforts to prove the existence of spectral shift functions, see \cite{Neid, PoSkSucont,  PoSkSu13, Sk17}, naturally led to the question of the existence and the representation of the derivatives of
$$
\varphi : t\in \mathbb{R} \mapsto f(e^{itA}U) - f(U),
$$
where $A=A^* \in \mathcal{B}(\hilh)$, $U \in \mathcal{U}(\hilh)$ and $f : \mathbb{T} \to \mathbb{C}$ is a function defined on $\mathbb{T}$, the unit circle of $\mathbb{C}$. In \cite{PoSkSu16}, the authors proved that if $f$ belongs to the Besov class $B^n_{\infty 1}(\cir)$, $n\geq 2$, the $n$th order derivative of $\varphi$ exists in the operator norm. For the Schatten classes, it was proved in \cite{CCGP} that if $1<p<\infty$ and $A\in \mathcal{S}^p(\hilh)$, then, under the assumption $f\in C^n(\mathbb{T})$, the function $\varphi$ is $n$-times continuously $\Sp^{p}$-differentiable on $\mathbb{R}$. In fact, stronger results hold, see \cite[Theorem 3.3]{CCGP}. The common denominator in all these results is the use of the theory of multiple operator integrals, which can be seen as the measurable counterparts of Schur multipliers. In particular, the derivatives of $\varphi$ can be expressed as multiple operator integrals or a linear combination of them, with respect to the divided differences of $f$. See for instance \cite[Theorem 3.7]{PoSkSuTo17} for the finite dimensional case and \cite[Theorem 3.5]{CCGP} for the infinite dimensional case.

In the selfadjoint case, more is known. The analogue question is to investigate under which assumptions on $g : \mathbb{R} \to \mathbb{C}$,
the function
$$
\psi : t \in \mathbb{R} \mapsto g(A+tK) - g(A)
$$
is differentiable, where $A$ and $K$ are selfadjoint with $K$ bounded. When $g \in C^n(\mathbb{R})$ with bounded derivatives and $K\in \mathcal{S}^p$ with $1<p<\infty$, it is known that $\psi$ is $\mathcal{S}^p$-differentiable with continuous derivatives, see \cite{CoLeSkSu17, LeSk20}. In fact, the existence of $\psi'$ in the $\mathcal{S}^p$-norm holds when the assumptions on $g$ are relaxed. Indeed, a striking result is given in \cite{KiPoShSu14}, where the authors proved that the condition ``$g$ differentiable on $\mathbb{R}$ with bounded derivative" ensures the differentiability of $\psi$ in the $\mathcal{S}^p$-norm. This is a fundamental difference with the $\mathcal{B}(\hilh)$ case, since it is known that the stronger condition ``$g\in C^1(\mathbb{R})$ with bounded derivative" is not sufficient for the existence of $\psi'$ in the operator norm, see \cite{Far}. A generalization of the aforementioned result for the higher order differentiability of $\psi$ has been established in \cite{Co22}, where it was shown that if $g$ is $n$-times differentiable with bounded derivatives $g',\ldots, g^{(n)}$, then so does $\psi$. Surprisingly, it appears that the corresponding result for functions of unitaries was not known, even in the case $n=1$ and in the Hilbert-Schmidt case $\mathcal{S}^2(\hilh)$. Namely, if we drop the assumption of continuity of the derivative of $f : \mathbb{T} \to \mathbb{C}$, do we have the differentiability of $\varphi$ in $\mathcal{S}^2(\hilh)$ or even in $\mathcal{S}^p(\hilh)$?

In this paper, we solve this last question in two ways: first by requiring the minimal assumptions on $f$, and secondly by obtaining the $n$th order differentiability for the associated operator function. We prove (see Theorem \ref{main}) that if $1<p<\infty$, $f$ is an $n$-times differentiable function on $\mathbb{T}$ with a bounded $n$th derivative $f^{(n)}$ and $U : \mathbb{R} \to \mathcal{U}(\hilh)$ is such that $\tilde{U} : t\in \mathbb{R} \mapsto U(t)-U(0) \in \mathcal{S}^p(\hilh)$ is $n$-times differentiable, then the operator valued function
$$
\varphi : t\in \mathbb{R} \mapsto f(U(t)) - f(U(0)) \in \mathcal{S}^p(\hilh)
$$
is $n$-times differentiable on $\mathbb{R}$. Moreover, if $U$ has bounded derivatives, then so does $\varphi$. We show that the explicit formula for the derivatives of $\varphi$ given as a sum of multiple operator integrals and that were obtained with stronger assumptions in \cite{CCGP, PoSkSuTo17} also hold true at the degree of generality aimed at in this paper. Note that this result is optimal: it is clear that if $\varphi$ is differentiable for every differentiable function $U$, then $f$ itself must be differentiable. In particular, this paper settles the question of $\mathcal{S}^p$-differentiability for functions of unitaries. Additionally, we explain, in Remark \ref{Taylorform}, how to obtain a representation of the Taylor remainder
$$
R_{n,f,U}(t):=f(U(t))-f(U(0))-\sum_{m=1}^{n-1} \dfrac{1}{m!}\varphi^{(m)}(0),
$$
as well as an estimate of their $\mathcal{S}^p$-norm in the case $U(t)=e^{itA}U$.\\

To achieve our results, we first have to establish important properties of multiple operator integrals, such as their boundedness on Schatten classes when they are associated to divided differences, and some of their properties that will be suitable to study the differentiability of operator functions. Some of the properties are similar to those we can find in \cite{CCGP}, however, in this more general setting, the proofs will require more care. In particular, our approach uses the construction of multiple operator integrals as defined in \cite{CoLeSu21}, which is appropriate to our study as it is very general. Next, we will show that with the help of a Cayley transform, we can use the selfadjoint analogue of our main result, proved in \cite{Co22}, to obtain our result in a particular case. This step is crucial and this is where the biggest differences appear between the case when $f$ only has a bounded $n$th derivative, and the case when $f$ has more regularity such as $f\in C^n(\mathbb{T})$. In the latter case, one can approximate $f$ and its derivatives uniformly (which yields stronger results), while in the case when the assumptions are relaxed, the approach of \cite{Co22, KiPoShSu14} rests on the approximation of the operators appearing in the $\mathcal{S}^p$-perturbation. Finally, the main result, Theorem \ref{main}, will follow from a careful approximation of the path of unitaries.\\

The paper is organized as follows: in Section \ref{Sec2}, we give the definition of the divided differences of a function $f$ and show that they can be approximated by more regular functions in Lemma \ref{approxdivdiff} and Lemma \ref{approxdivdiff2}. In Section \ref{Sec3}, we recall the definition of multiple operator integrals and establish some properties such as their $\mathcal{S}^p$-boundedness in Theorem \ref{MOISpBound} and an important perturbation formula in Proposition \ref{PerturbationFormula}. In Section \ref{sectionselfadj}, we generalize the main result of \cite{Co22} to be able to apply it in Proposition \ref{simplecasemainthm}, which is a weaker version of our main result. Finally, Section \ref{sectionmainresult} is dedicated to the proof of Theorem \ref{main}. The proof will require two auxiliary results, Proposition \ref{Approxunit} and Lemma \ref{intermlemma}, which are the first steps towards an approximation argument used in the proof of our main result.

\bigskip

\noindent \textbf{Notations and conventions.}
\medskip

- Whenever $Z$ is a set and $W\subset Z$ a subset, we let $\chi_W : Z \to \{ 0, 1 \}$ denote the characteristic function of $W$.

\smallskip

- As recalled at the beginning of the Introduction, $\mathcal{S}^p(\hilh)$ will denote the $p$-Schatten class on a complex separable Hilbert space $\hilh$, and $\mathcal{S}^p_{sa}(\hilh)$ will be the subset of selfadjoint operators in $\mathcal{S}^p(\hilh)$. The $\mathcal{S}^p$-norm of an element $K\in \mathcal{S}^p(\hilh)$ is denoted by $\|K\|_p$.

\smallskip

- Similarly, $\mathcal{B}(\hilh)$ is the Banach space of bounded linear operators $A : \hilh \to \hilh$ equipped with the operator norm, denoted by $\|A\|$. We let $\mathcal{B}_{sa}(\hilh)$ denote the subset of bounded selfadjoint operators on $\hilh$.

\smallskip

- Let $f : \mathbb{T} \to \mathbb{C}$ be a function. The derivative of $f$ at $z_{0}\in\cir$ is the limit
\begin{align}\label{ref14}
f'(z_0):=\lim_{z\in \cir, ~ z\to z_{0}}\dfrac{f(z)-f(z_{0})}{z-z_{0}},
\end{align}
provided it exists.

\smallskip

- If $\varphi : \mathbb{R} \to \mathcal{S}^p(\hilh)$ is an $\mathcal{S}^p(\hilh)$-valued function, we will say that $\varphi$ is differentiable at $s\in \mathbb{R}$ if the limit
$$
\varphi'(s) :=  \lim_{t\to s} \dfrac{\varphi(t)-\varphi(s)}{t-s}
$$
exists in $\mathcal{S}^p(\hilh)$. In that case, $\varphi'(s) \in \mathcal{S}^p(\hilh)$.

\smallskip

- If $T \in \mathcal{B}(\hilh)$, we let $\sigma(T)$ denote the spectrum of $T$. In particular, if $T\in \mathcal{U}(\hilh)$, $\sigma(U)\subset \mathbb{T}$. 

\smallskip

- For any $k\in\N$, we will use the notation $(T)^{k}=\underbrace{T,\ldots,T}_{\text{$k$}}$.

\smallskip

- Let $n\in \mathbb{N}$ and let $X_1, \ldots, X_n, Y$ be Banach spaces. We let $\mathcal{B}_{n}(X_1\times\cdots\times X_n,Y)$ denote the space of bounded $n$-linear operators $T : X_1 \times \cdots \times X_n \to Y$, equipped with the norm
$$
\|T\|_{\mathcal{B}_{n}(X_1\times\cdots\times X_n,Y)}:=\sup_{\|x_i\|\le 1,\,1\le i\le n}\|T(x_1,\ldots,x_n)\|.
$$
We will sometimes write $\|T\|$ for the norm of $T$ when no confusion can occur. In the case when $X_1=\cdots=X_n=Y$, we will simply denote this space by $\mathcal{B}_{n}(Y)$. Finally, note that $\mathcal{B}_n(\mathcal{S}^2(\mathcal{H}))$ is a dual space, see \cite[Section 3.1]{CoLeSu21} for details.

\smallskip

\section{Divided differences and approximation}\label{Sec2}

In this section, we first recall the definition of the divided differences of a function $f$ and its properties. Then, we will give the construction of two sequences of elements of $C^n(\mathbb{T})$ which approximate, in a certain sense, the divided differences of a function $f$ with bounded $n$th derivative. Both constructions have advantages and disadvantages, as explained before each statement. \\

Let $f : \mathbb{T} \to \mathbb{C}$ be a function defined on $\mathbb{T}$. We define its divided difference $f^{[n]} : \mathbb{T}^{n+1} \to \mathbb{C}$ recursively as follows. First, by convention $f^{[0]}=f$. Next, if $f$ is differentiable, $f^{[1]}:\mathbb{T}^{2}\to\C$ is defined by
\begin{align*}
f^{[1]}(\lambda_1,\lambda_2):=\begin{cases*}
\frac{f(\lambda_1)-f(\lambda_2)}{\lambda_1-\lambda_2}\ \ \ \text{if}\ \lambda_1\neq\lambda_2\\
f'(\lambda_1)\ \ \ \ \ \ \ \ \text{if}\ \lambda_1=\lambda_2,
\end{cases*}\quad\lambda_1,\lambda_2\in\cir.
\end{align*}
Now, let $n\in \mathbb{N}$ and assume that $f$ is $n$-times differentiable on $\mathbb{T}$. The divided difference of $n$th order $f^{[n]}:\mathbb{T}^{n+1}\to\C$ is defined by
\begin{align*}
f^{[n]}(\lambda_1,\lambda_2,\ldots,\lambda_{n+1}):=\begin{cases*}
\frac{f^{[n-1]}(\lambda_1,\lambda_{3},\ldots,\lambda_{n+1})-f^{[n-1]}(\lambda_2,\lambda_{3},\ldots,\lambda_{n+1})}{\lambda_1-\lambda_2}\ \ \ \text{if}\ \lambda_1\neq\lambda_2\\
\partial_{1}f^{[n-1]}(\lambda_1,\lambda_{3},\ldots,\lambda_{n+1})\ \ \ \ \ \ \ \ \ \ \ \ \ \ \ \text{if}\ \lambda_1=\lambda_2
\end{cases*}
\end{align*}
for all $\lambda_1,\ldots,\lambda_{n+1}\in\cir$, where $\partial_{i}$ stands for the partial derivative with respect to the $i$th variable.

The function $f^{[n]}$ is symmetric in the $n+1$ variables $(\lambda_1,\ldots, \lambda_{n+1})$, it is measurable, and $f^{[n]}$ is bounded if and only if $f^{(n)}$ is bounded. Indeed, it follows from \cite[Theorem 2.1]{Curtiss} that there exists a constant $d_n$ such that
\begin{equation}\label{majdivdiff}
\|f^{[n]}\|_{L^{\infty}(\mathbb{T}^{n+1})} \leq d_n \|f^{(n)}\|_{L^{\infty}(\mathbb{T})}.
\end{equation}
In \cite{Curtiss}, the estimate for $|f^{[n]}(\lambda_1,\lambda_2,\ldots,\lambda_{n+1})|$ was obtained for distinct points $\lambda_i$, but when $f$ is $n$-times differentiable, the same inequality readily extends to every point of $\mathbb{T}^{n+1}$.\\

In the following, we give the first construction of a sequence $(f_j)_j$ which will approximate the derivatives of $f$ and its divided differences. This construction will allow us to obtain a satisfactory bound for the $n$th divided difference of $f_j$, which in turn will allow us to get a certain bound in Theorem \ref{MOISpBound}.

\begin{lma}\label{approxdivdiff}
Let $n\in \mathbb{N}$ and let $f : \mathbb{T} \to \mathbb{C}$ be a $n$-times differentiable function such that $f^{(n)}$ is bounded. Then there exists a sequence $(f_j)_j$ of trigonometric polynomials on $\mathbb{T}$ such that
\begin{enumerate}
\item For every $1\leq k \leq n-1$, the sequence $(f_j^{[k]})_j$, is uniformly convergent to $f^{[k]}$ on $\mathbb{T}^{k+1}$.
\item The sequence $(f_j^{[n]})_j$ is pointwise convergent to $f^{[n]}$ on $\mathbb{T}^{n+1} \setminus \{ (\lambda_1,\ldots, \lambda_{n+1}) \mid \lambda_1 = \cdots = \lambda_{n+1} \}$.
\item For every $j$,
$$\|f_j^{[n]}\|_{L^{\infty}(\mathbb{T}^{n+1})} \leq d_n \|f_j^{(n)}\|_{L^{\infty}(\mathbb{T})} \leq d_n \|f^{(n)}\|_{L^{\infty}(\mathbb{T})},$$
where $d_n$ is the constant given in \eqref{majdivdiff}.
\end{enumerate}
\end{lma}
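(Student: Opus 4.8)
The plan is to take for $(f_j)_j$ the Fej\'er means of $f$. Writing $F_N$ for the Fej\'er kernel of order $N$, I would set $f_j := f * F_j$, which is a trigonometric polynomial on $\cir$. Two features of $F_N$ make this choice work: it is nonnegative with $\frac{1}{2\pi}\int_{\cir} F_N = 1$, so convolution against it does not increase the sup norm, and it is smooth, so convolution commutes with differentiation. Since $f$ is $n$-times differentiable and $F_j$ is smooth, differentiating under the integral sign together with periodicity gives
$$
f_j^{(k)} = f^{(k)} * F_j, \qquad 0 \leq k \leq n,
$$
that is, the $k$th derivative of the $j$th Fej\'er mean of $f$ is the $j$th Fej\'er mean of $f^{(k)}$. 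Property (3) is then immediate: since $F_j \geq 0$ has integral $1$, one has $\|f_j^{(n)}\|_{L^\infty(\cir)} = \|f^{(n)} * F_j\|_{L^\infty(\cir)} \leq \|f^{(n)}\|_{L^\infty(\cir)}$, and applying \eqref{majdivdiff} to the trigonometric polynomial $f_j$ gives $\|f_j^{[n]}\|_{L^\infty(\cir^{n+1})} \leq d_n \|f_j^{(n)}\|_{L^\infty(\cir)}$.

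For property (1) I would use that the divided-difference operation $g \mapsto g^{[k]}$ is linear, so that $f_j^{[k]} - f^{[k]} = (f_j - f)^{[k]}$, and then apply \eqref{majdivdiff} at order $k$ to the $k$-times differentiable function $f_j - f$:
$$
\|f_j^{[k]} - f^{[k]}\|_{L^\infty(\cir^{k+1})} \leq d_k \,\|f_j^{(k)} - f^{(k)}\|_{L^\infty(\cir)}.
$$
For $1 \leq k \leq n-1$ the derivative $f^{(k)}$ is itself differentiable, hence continuous, so the classical theorem on Fej\'er means of continuous functions yields $f^{(k)} * F_j \to f^{(k)}$ uniformly, i.e.\ $f_j^{(k)} \to f^{(k)}$ uniformly; (1) follows at once.

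The delicate point is (2), the pointwise convergence of $f_j^{[n]}$ off the full diagonal, and this is where I expect the main obstacle to lie: here $f^{(n)}$ need not be continuous, so the means $f_j^{(n)} = f^{(n)} * F_j$ no longer converge uniformly and the top-order estimate above fails. To circumvent this I would exploit the symmetry of the divided differences together with the recursion in their definition. Fix a point $(\lambda_1, \ldots, \lambda_{n+1})$ not on the diagonal; then at least two coordinates differ, and by symmetry of $f^{[n]}$ (and of $f_j^{[n]}$) we may assume $\lambda_1 \neq \lambda_2$. The recursion then gives
$$
f_j^{[n]}(\lambda_1, \ldots, \lambda_{n+1}) = \frac{f_j^{[n-1]}(\lambda_1, \lambda_3, \ldots, \lambda_{n+1}) - f_j^{[n-1]}(\lambda_2, \lambda_3, \ldots, \lambda_{n+1})}{\lambda_1 - \lambda_2},
$$
and the identical formula holds for $f$. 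Since $\lambda_1 - \lambda_2$ is a fixed nonzero scalar and, by part (1), $f_j^{[n-1]} \to f^{[n-1]}$ uniformly when $n \geq 2$, the numerator converges and hence so does the quotient, to $f^{[n]}(\lambda_1, \ldots, \lambda_{n+1})$; when $n = 1$ one argues directly from $f_j \to f$ uniformly and $f^{[1]}(\lambda_1,\lambda_2) = (f(\lambda_1)-f(\lambda_2))/(\lambda_1-\lambda_2)$. This reduces the top-order pointwise statement to the uniform convergence already established at order $n-1$, which is the key trick of the argument.
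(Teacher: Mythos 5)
Your proposal is correct and follows essentially the same route as the paper: Fej\'er means, the bound \eqref{majdivdiff} applied to $f_j-f$ for (1) and to $f_j$ for (3), and the symmetry-plus-recursion reduction of (2) to the uniform convergence already obtained at order $n-1$. The only slip is the identity $f_j^{(k)}=f^{(k)}\ast F_j$: since the derivative on $\mathbb{T}$ is taken with respect to $z$ rather than the angle, the correct kernel is $e^{-ikt}F_j(t)$, which is no longer nonnegative but still has $L^1$-norm $1$, so your sup-norm estimate and the uniform convergence for $k\leq n-1$ survive via Young's inequality.
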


\begin{proof}
Define, for every $j\in \mathbb{N}$, $f_j := f\ast F_j$ where $F_j$ is the Fej\'er kernel, that is,
$$
\forall z=e^{i\theta} \in \mathbb{T}, \ f_j(z) = \int_0^{2\pi} f(e^{it})F_j(\theta-t) \frac{dt}{2\pi} = \int_0^{2\pi} f(e^{i(\theta-t)})F_j(t) \frac{dt}{2\pi}.
$$
For every $j$, $f_j$ is a trigonometric polynomial. Moreover, since $f^{(n)}$ is bounded, it is a well-known fact that $f_j$ is $n$-times differentiable on $\mathbb{T}$ and for every $1\leq k \leq n$,
$$
\forall z=e^{i\theta} \in \mathbb{T}, \ f_j^{(k)}(z) = \int_0^{2\pi} f^{(k)}(e^{i(\theta-t)}) e^{-ikt}F_j(t) \frac{dt}{2\pi} = (f^{(k)} \ast F_{j,k})(z),
$$
where $F_{j,k}(t) = e^{-ikt}F_j(t)$. In particular, according to \eqref{majdivdiff} and by Young's inequality, we have, for every $1\leq k \leq n$,
\begin{align*}
\|f_j^{[k]}\|_{L^{\infty}(\mathbb{T}^{k+1})} \leq d_k \|f_j^{(k)}\|_{L^{\infty}(\mathbb{T})} \leq d_k \|f^{(k)}\|_{L^{\infty}(\mathbb{T})} \|F_{j,k}\|_{L^{1}(\mathbb{T})}
& = d_k \|f^{(k)}\|_{L^{\infty}(\mathbb{T})} \|F_{j}\|_{L^{1}(\mathbb{T})}\\
& = d_k \|f^{(k)}\|_{L^{\infty}(\mathbb{T})}.
\end{align*}
Next, it is a classical fact that for every $1 \leq k \leq n-1$,
$$
f_j^{(k)} \underset{j\to \infty}{\longrightarrow} f^{(k)} \quad \text{uniformly on} \ \mathbb{T}.
$$
By \eqref{majdivdiff}, this yields
$$
\|f^{[k]} - f_j^{[k]}\|_{L^{\infty}(\mathbb{T}^{k+1})} = \|(f-f_j)^{[k]}\|_{L^{\infty}(\mathbb{T}^{k+1})} \leq d_k \|f^{(k)} - f_j^{(k)}\|_{L^{\infty}(\mathbb{T})} \underset{j\to +\infty}{\longrightarrow} 0.
$$
Now, let $(\lambda_1, \ldots, \lambda_{n+1}) \in \mathbb{T}^{n+1}$ be outside the diagonal of $\mathbb{T}^{n+1}$. Let $1\leq i \leq n$ be such that $\lambda_i \neq \lambda_{i+1}$. It follows from the symmetry of $f_j^{[n]}$ that
\begin{align*}
& f_j^{[n]}(\lambda_1,\ldots, \lambda_{n+1}) = \frac{f_j^{[n-1]}(\lambda_1,\ldots, \lambda_i, \lambda_{i+2},\ldots,\lambda_{n+1})-f_j^{[n-1]}(\lambda_1,\ldots, \lambda_{i-1}, \lambda_{i+1},\ldots,\lambda_{n+1})}{\lambda_i-\lambda_{i+1}}.
\end{align*}
Hence, the pointwise convergence of $(f_j^{[n-1]})_j$ to $f^{[n-1]}$ implies the convergence of the sequence $(f_j^{[n]}(\lambda_1,\lambda_2,\ldots,\lambda_{n+1}))_j$ to $f^{[n]}(\lambda_1,\lambda_2,\ldots,\lambda_{n+1})$.
\end{proof}

The next lemma gives the construction of another sequence of functions $(f_j)_j$ whose advantage is that $(f_j^{[n]})_j$ is pointwise convergent to $f^{[n]}$ everywhere. However, it is not clear that we can estimate the derivatives $f_j^{(n)}$ as efficiently as in Lemma \ref{approxdivdiff}. For that reason, and even if we can have a better estimate, we will only prove that they are bounded, which is enough for our purpose. This result will be useful in Section \ref{sectionselfadj}, as it will allow us to circumvent certain combinatorial and computational difficulties, see Proposition \ref{simplecasemainthm}.

\begin{lma}\label{approxdivdiff2}
Let $n\in \mathbb{N}$ and let $f : \mathbb{T} \to \mathbb{C}$ be a $n$-times differentiable function such that $f^{(n)}$ is bounded. Then there exists a sequence $(f_j)_j \subset C^n(\mathbb{T})$ such that
\begin{enumerate}
\item For every $1\leq k \leq n-1$, the sequence $(f_j^{[k]})_j$, is uniformly convergent to $f^{[k]}$ on $\mathbb{T}^{k+1}$.
\item The sequence $(f_j^{[n]})_j$ is pointwise convergent to $f^{[n]}$ on $\mathbb{T}^{k+1}$.
\item There exist a constant $M>0$ such that, for every $1 \leq k \leq n$ and every $j\in \mathbb{N}$,
$$\|f_j^{(k)}\|_{L^{\infty}(\mathbb{T})} \leq M.$$
\end{enumerate}
\end{lma}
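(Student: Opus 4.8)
The plan is to reduce the three claimed properties to convergence statements about the derivatives $f_j^{(k)}$, and then to build the approximants by integrating a suitable pointwise approximation of $f^{(n)}$. The guiding observation is that, contrary to the Fej\'er construction of Lemma \ref{approxdivdiff}, we must now secure the convergence of $f_j^{[n]}$ \emph{on} the diagonal of $\mathbb{T}^{n+1}$, where $f^{[n]}(\lambda,\ldots,\lambda)=\frac{1}{n!}f^{(n)}(\lambda)$; since the $f_j$ will be of class $C^n$, this forces $f^{(n)}$ to be a pointwise limit of continuous functions. The crucial point that makes the construction possible is therefore that $f^{(n)}$, being the derivative of the continuous function $f^{(n-1)}$, is of Baire class $1$, and hence is such a limit.

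First I would isolate the following reduction. Suppose we have produced $(f_j)_j \subset C^n(\mathbb{T})$ such that $f_j^{(k)} \to f^{(k)}$ uniformly on $\mathbb{T}$ for $1 \le k \le n-1$, such that $f_j^{(n)} \to f^{(n)}$ pointwise on $\mathbb{T}$, and such that $\sup_j \|f_j^{(n)}\|_{L^\infty(\mathbb{T})} < \infty$. Then (3) follows because a uniformly convergent sequence is bounded, while (1) follows by applying \eqref{majdivdiff} to $f-f_j$, which gives $\|f^{[k]}-f_j^{[k]}\|_{L^\infty(\mathbb{T}^{k+1})} \le d_k\|f^{(k)}-f_j^{(k)}\|_{L^\infty(\mathbb{T})} \to 0$ for $1 \le k \le n-1$. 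For (2) I would split $\mathbb{T}^{n+1}$ into the full diagonal and its complement. Off the diagonal, using the symmetry of the divided differences I can place an unequal pair in the first two slots and write $f_j^{[n]}$ as a difference quotient of $f_j^{[n-1]}$ over the fixed nonzero denominator $\lambda_1-\lambda_2$; the convergence of $f_j^{[n-1]}$, uniform by (1), then yields pointwise convergence of $f_j^{[n]}$ to $f^{[n]}$ there. On the full diagonal, $f_j^{[n]}(\lambda,\ldots,\lambda)=\frac{1}{n!}f_j^{(n)}(\lambda) \to \frac{1}{n!}f^{(n)}(\lambda) = f^{[n]}(\lambda,\ldots,\lambda)$ by the pointwise convergence of $f_j^{(n)}$. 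Thus only the top-order derivative need converge merely pointwise, and only to handle the diagonal.

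It then remains to construct such $f_j$. Parametrizing $\mathbb{T}$ by $\theta \mapsto e^{i\theta}$ and writing $F(\theta)=f(e^{i\theta})$, the function $F$ is $2\pi$-periodic, $n$-times differentiable with $F^{(n)}$ bounded, and $F^{(n)}$ is again of Baire class $1$. Using this together with a truncation of the modulus, I would choose continuous $2\pi$-periodic functions $g_j$ with $\|g_j\|_{L^\infty} \le \|F^{(n)}\|_{L^\infty}$ and $g_j \to F^{(n)}$ pointwise, and then, subtracting their mean (which vanishes in the limit), I may assume $\int_0^{2\pi} g_j = 0$. I would next antidifferentiate $n$ times, defining periodic functions $u_{m,j}$ downward from $u_{n,j}=g_j$ by $u_{m,j}(\theta)=F^{(m)}(0)+\int_0^\theta (u_{m+1,j}-\overline{u_{m+1,j}})$, where $\overline{\phantom{u}}$ denotes the mean over a period, inserted precisely to keep each $u_{m,j}$ periodic. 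A downward induction using bounded convergence shows that $\overline{u_{m+1,j}}\to 0$ and $u_{m,j}\to F^{(m)}$ uniformly for $0 \le m \le n-1$. Setting $F_j:=u_{0,j}$, one checks that $F_j^{(m)}=u_{m,j}-\overline{u_{m,j}}$ for $1\le m \le n-1$ and $F_j^{(n)}=g_j$, so that $F_j \in C^n$, $F_j^{(m)} \to F^{(m)}$ uniformly for $m \le n-1$, $F_j^{(n)}\to F^{(n)}$ pointwise, and all derivatives are uniformly bounded in $j$. Finally I would set $f_j(e^{i\theta}):=F_j(\theta)$ and transfer these properties to the circle derivatives $f_j^{(k)}$: since $f_j^{(k)}(e^{i\theta})$ is a triangular combination of $F_j^{(1)},\ldots,F_j^{(k)}$ with smooth, bounded, $j$-independent coefficients and invertible leading coefficient $(ie^{i\theta})^{-k}$, the uniform and pointwise convergences as well as the bounds carry over, placing us in the situation of the reduction above.

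The main obstacle is the diagonal of property (2): it is exactly here that the Fej\'er approximation of Lemma \ref{approxdivdiff} fails, because the Fej\'er means of $f^{(n)}$ converge only at its continuity points. Overcoming it rests on the two ideas above, namely identifying $f^{(n)}$ as a bounded Baire-class-$1$ function, so that a uniformly bounded pointwise approximation by continuous functions exists at all, and performing the iterated antidifferentiation with mean corrections so that periodicity is preserved without destroying the uniform convergence of the lower-order derivatives. The remaining bookkeeping, that is the passage between the circle derivatives $f^{(k)}$ and the derivatives $F^{(k)}$ of the angular parametrization, is routine but must be carried out with some care to ensure the coefficients are bounded independently of $j$.
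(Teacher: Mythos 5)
Your proof is correct, and the reduction you set up first — properties (1)--(3) follow once one has $f_j^{(k)}\to f^{(k)}$ uniformly for $k\le n-1$, $f_j^{(n)}\to f^{(n)}$ pointwise, and a uniform bound on $\|f_j^{(n)}\|_\infty$, with (2) handled by splitting $\mathbb{T}^{n+1}$ into the diagonal (where $f_j^{[n]}=\frac{1}{n!}f_j^{(n)}$) and its complement (where $f_j^{[n]}$ is a difference quotient of $f_j^{[n-1]}$) — is exactly the paper's. Where you diverge is in the construction of the approximants. The paper defines, on the periodic line, $\tilde{f_j}(t)=j\int_0^t(\tilde f(u+1/j)-\tilde f(u))\,du+f(0)$, so that $(\tilde{f_j})^{(k)}$ is the difference quotient $j(\tilde f^{(k-1)}(\cdot+1/j)-\tilde f^{(k-1)})$ for every $1\le k\le n$ simultaneously: the mean value theorem then delivers the uniform bound $|(\tilde{f_j})^{(k)}|\le\|\tilde f^{(k)}\|_\infty$, uniform convergence for $k\le n-1$ (uniform continuity of $\tilde f^{(k)}$), and pointwise convergence for $k=n$ (definition of the derivative), all in one stroke, with periodicity of $\tilde{f_j}$ immediate from periodicity of $\tilde f$. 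You instead approximate only the top derivative $F^{(n)}$ abstractly — invoking that a bounded derivative is a bounded Baire-class-$1$ function, hence a uniformly bounded pointwise limit of continuous functions — and then antidifferentiate $n$ times, inserting mean corrections at each stage to preserve periodicity. This is more work (the $n$ iterated integrations, the bookkeeping of the means $\overline{u_{m,j}}$, and the appeal to the Baire-class-$1$ theorem, whose canonical witness is in any case the very difference quotient the paper uses), but it is more flexible: your scheme would produce the lemma from \emph{any} uniformly bounded continuous pointwise approximation of $f^{(n)}$, whereas the paper's is tied to the specific translation-difference structure. Both arguments transfer back to the circle via the same triangular change-of-variables formulas for $f^{(k)}$ versus $F^{(k)}$, so the conclusion is the same.
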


\begin{proof}
For a function $g : \mathbb{T} \to \mathbb{C}$, we let $\tilde{g} : \mathbb{R} \to \mathbb{C}$ to be the $2\pi$-periodic function defined for every $t\in \mathbb{R}$ by $\tilde{g}(t) = g(e^{it})$. Then $g$ is $n$-times differentiable on $\mathbb{T}$ if and only if $\tilde{g}$ is $n$-times differentiable on $\mathbb{R}$. Moreover, by induction (or using Fa\`a di Bruno's formula), we can prove that for every $1\leq k\leq n$, there exist constants $a_{1,k}, \ldots, a_{k,k}, b_{1,k}, \ldots, b_{k,k} \in \mathbb{C}$ (which we do not need to explicit) such that, for every $e^{it}\in \mathbb{T}$,
\begin{equation}\label{fand2pif}
(\tilde{g})^{(k)}(t) = \sum_{p=1}^k a_{p,k} e^{ipt} g^{(p)}(e^{it}) \quad \text{and} \quad g^{(k)}(e^{it}) = e^{-ikt}\sum_{p=1}^k b_{p,k} (\tilde{g})^{(p)}(t).
\end{equation}
Define, for any $j\in \mathbb{N}$, $\tilde{f_j} : \mathbb{R}\to \mathbb{C}$ by
$$
\forall t \in \mathbb{R}, \ \tilde{f_j}(t) = j\int_{0}^t (\tilde{f}(u+1/j)-\tilde{f}(u)) du + f(0).
$$
Then $\tilde{f_j}$ is $2\pi$-periodic, $\tilde{f_j} \in C^n(\mathbb{R})$ and for every $1\leq k \leq n$ and every $t\in \mathbb{R}$,
\begin{equation}\label{convsimplenthderderivatives}
(\tilde{f_j})^{(k)}(t) = j \left((\tilde{f})^{(k-1)}(t+1/j)-(\tilde{f})^{(k-1)}(t) \right).
\end{equation}
It is then easy to check that for every $1\leq k \leq n-1$,
\begin{equation}\label{convsimplenthder2}
(\tilde{f_j})^{(k)} \underset{j\to +\infty}{\longrightarrow} (\tilde{f})^{(k)}
\quad \text{uniformly on} \ \mathbb{R}
\end{equation}
and
\begin{equation}\label{convsimplenthder}
(\tilde{f_j})^{(n)} \underset{j\to +\infty}{\longrightarrow} (\tilde{f})^{(n)}
\quad \text{pointwise on} \ \mathbb{R}.
\end{equation}
Moreover, for every $1\leq k\leq n$,
\begin{equation}\label{boundderivativesapprox}
\forall t\in \mathbb{R}, \ |(\tilde{f_j})^{(k)}(t)| \leq \| (\tilde{f})^{(k)} \|_{L^{\infty}(\mathbb{R})}.
\end{equation}

Let us show that the sequence $(f_j)_j$, where $f_j(e^{it}) = \tilde{f_j}(t)$, satisfies conditions $(1),(2)$ and $(3)$. 
First, $f_j$ is $n$-times differentiable on $\mathbb{T}$ and according to \eqref{fand2pif}, we have, for every $1\leq k\leq n$,
$$
f_j^{(k)}(e^{it}) = e^{-ikt}\sum_{p=1}^k b_{p,k} (\tilde{f_j})^{(p)}(t).
$$
It follows that $f_j^{(k)}$ is continuous on $\mathbb{T}$ so that $f_j \in C^n(\mathbb{T})$. Moreover, for $1\leq k\leq n-1$ and by \eqref{convsimplenthder2}, $(f_j^{(k)})_j$ is uniformly convergent to the function $$z=e^{it} \mapsto e^{-ikt}\sum_{p=1}^k b_{p,k} (\tilde{f})^{(p)}(t) = f^{(k)}(e^{it}),$$
and similarly, by \eqref{convsimplenthder2} and \eqref{convsimplenthder},
\begin{equation}\label{convsimplenthder3}
f_j^{(n)} \underset{j\to +\infty}{\longrightarrow} f^{(n)} \quad \text{pointwise on} \ \mathbb{T}.
\end{equation}
Hence, according to \eqref{majdivdiff}, we have that for $1\leq k \leq n-1$,
$$
\|f^{[k]} - f_j^{[k]}\|_{L^{\infty}(\mathbb{T}^{k+1})} = \|(f-f_j)^{[k]}\|_{L^{\infty}(\mathbb{T}^{k+1})} \leq d_k \|f^{(k)} - f_j^{(k)}\|_{L^{\infty}(\mathbb{T})} \underset{j\to +\infty}{\longrightarrow} 0,
$$
which gives $(1)$.
As in the proof of Lemma \ref{approxdivdiff}, it also follows that $(f_j^{[n]})_j$ is pointwise convergent to $f^{[n]}$ outside the diagonal of $\mathbb{T}^{k+1}$ and if $(\lambda, \ldots, \lambda) \in \mathbb{T}^{n+1}$, we have, by \eqref{convsimplenthder3},
$$
f_j^{[n]}(\lambda, \ldots, \lambda) = \frac{1}{n!} f_j^{(n)}(\lambda) \underset{j\to +\infty}{\longrightarrow} \frac{1}{n!} f^{(n)}(\lambda) = f^{[n]}(\lambda, \ldots, \lambda),
$$
which proves that $(f_j)_j$ satisfies $(2)$.

Finally, by \eqref{boundderivativesapprox}, the sequences $((\tilde{f_j})^{(k)})_j$, $1\leq k \leq n$, are uniformly bounded on $\mathbb{R}$ and by \eqref{fand2pif}, this implies that the sequences $(f_j^{(k)})_j$, $1\leq k \leq n$, are uniformly bounded on $\mathbb{T}$. This yields $(3)$ and finishes the proof of the Lemma.
\end{proof}

\section{Multiple operator integrals}\label{Sec3}

In this section, we first recall the definition of multiple operator integrals mappings as constructed in \cite[Section 3]{CoLeSu21}. The other approaches to operator integration require a certain regularity of the symbol, while this construction is more general and thus align with this paper's scope. We refer to \cite{SkToBook} for other approaches, as well as the references therein. Next, we extend the result on the $\mathcal{S}^p$-boundedness of such mapping when the symbol is a divided difference $f^{[n]}$ for a (non-continuously) $n$-times differentiable function $f$ with bounded $n$th derivative. Finally, we prove an important perturbation formula and give some of its consequences, which are key for our analysis.

\subsection{Definition and background}\label{MOI}

Let $A$ be a normal operator on $\hilh$. In this paper, $A$ will be a unitary operator most of the time, but we will also need the case of selfadjoint operators in Section \ref{sectionselfadj}. Denote by $E^A$ its spectral measure. For any bounded Borel function $f : \sigma(A) \to \mathbb{C}$, one defines an element $f(A) \in \mathcal{B}(\hilh)$ by setting
\begin{align*}
f(A):=\int_{\sigma(A)}f(t)~dE^{A}(t).
\end{align*}
According to \cite[Section 15]{ConwayBook}, there exists a positive finite measure $\lambda_A$ on the Borel subsets of $\sigma(A)$ such that $E^A$ and $\lambda_A$ have the same sets of measure zero. If $f : \sigma(A) \to \mathbb{C}$ is bounded, then by \cite[Theorem 15.10]{ConwayBook}, the operator $f(A)$ only depends on the class of $f$ in $L^{\infty}(\lambda_{A})$ and it induces a $w^{\ast}$-continuous $\ast$-representation
\begin{align*}
f\in L^{\infty}(\lambda_{A})\mapsto f(A)\in\bh.
\end{align*}
The measure $\lambda_A$ is called a scalar-valued spectral measure for $A$.

Let $n\in\N$, and let $A_1,\ldots,A_{n+1}$ be normal operators on $\hilh$ with scalar-valued spectral measures $\lambda_{A_1},\ldots,\lambda_{A_{n+1}}$ Let
\begin{align*}
\Gamma:L^{\infty}(\lambda_{A_1})\otimes\cdots\otimes L^{\infty}(\lambda_{A_{n+1}})\to\mathcal{B}_n(\Sp^2(\hilh))
\end{align*}
be the linear map such that for any $f_{i}\in L^{\infty}(\lambda_{A_i}),~1\le i\le n+1$, and for any $K_1,\ldots,K_n\in\Sp^2(\hilh)$,
\begin{align}
&\left[\Gamma(f_1\otimes\cdots\otimes f_{n+1})\right](K_1,\ldots,K_n)=f_1(A_1)K_1f_2(A_2)K_2\cdots f_n(A_n)K_nf_{n+1}(A_{n+1}).
\end{align}
According to {\normalfont\cite[Proposition 3.4 and Corollary 3.9]{CoLeSu21}}, $\Gamma$ extends to a unique $w^{\ast}$-continuous isometry denoted by
\begin{align*}
\Gamma^{A_1,\ldots,A_{n+1}}:L^{\infty}\left(\prod_{i=1}^{n+1}\lambda_{A_i}\right)\to\mathcal{B}_n(\Sp^2(\hilh)).
\end{align*}
As recalled in the introduction, $\mathcal{B}_n(\Sp^2(\hilh))$ is a dual space, and the $w^{\ast}$-continuity of $\Gamma^{A_1,\ldots,A_{n+1}}$ means that if a net $(\varphi_{i})_{i\in I}$ in $L^{\infty}\left(\prod_{i=1}^{n+1}\lambda_{A_i}\right)$ converges to $\varphi\in L^{\infty}\left(\prod_{i=1}^{n+1}\lambda_{A_i}\right)$ in the $w^{\ast}$-topology, then for any $K_1,\ldots,K_n\in\Sp^{2}(\hilh)$, the net
\begin{align*}
\left(\left[\Gamma^{A_1,\ldots,A_{n+1}}(\varphi_i)\right](K_1,\ldots,K_n) \right)_{i\in I}
\end{align*}
converges to $\left[\Gamma^{A_1,\ldots,A_{n+1}}(\varphi)\right](K_1,\ldots,K_n)$ weakly in $\Sp^{2}(\hilh)$.

\begin{dfn}\label{DefMOI}
For $\varphi\in L^{\infty}\left(\prod_{i=1}^{n+1}\lambda_{A_i}\right)$, the transformation $\Gamma^{A_1,\ldots,A_{n+1}}(\varphi)$ is called multiple operator integral associated to $A_1
,\ldots,A_{n+1}$ and $\varphi$. The element $\varphi$ is sometimes referred to as the symbol of the multiple operator integral.
\end{dfn}

To conclude this subsection, note that one can define
$$
\Gamma^{A_1,\ldots,A_{n+1}}(\varphi):\Sp^{2}(\hilh)\times\cdots\times\Sp^{2}(\hilh)\to\Sp^{2}(\hilh)
$$
for any bounded Borel function $\varphi:\mathcal{U}\to\C$ such that $\prod_{i=1}^{n+1} \sigma(A_i) \subset \mathcal{U}$ by setting
$$
\Gamma^{A_1,\ldots,A_{n+1}}(\varphi) := \Gamma^{A_1,\ldots,A_{n+1}}(\widetilde{\varphi}),
$$
where $\widetilde{\varphi}$ is the class of its restriction $\varphi\vert_{\sigma(A_1)\times\cdots\times\sigma(A_{n+1})}$ in $L^{\infty}\left(\prod_{i=1}^{n+1}\lambda_{A_i}\right)$.

\subsection{$\mathcal{S}^p$-boundedness and perturbation estimate}\label{secpert}

Let $1<p<\infty$, $n\in \mathbb{N}$, and let $U_1,\ldots,U_{n+1}$ be unitaries on $\hilh$. In this subsection, we first establish that for every $n$-times differentiable function $f$ on $\mathbb{T}$ with bounded $n$th derivative, for the symbol $\varphi=f^{[n]}$, we have $\Gamma^{U_1,\ldots,U_{n+1}}(f^{[n]}) \in \mathcal{B}_n(\mathcal{S}^p(\hilh))$.

Precisely, and more generally, we will show the following. If $1<p,p_j<\infty$, $j=1,\ldots,n$ are such that $\frac{1}{p}=\frac{1}{p_{1}}+\cdots+\frac{1}{p_{n}}$ and $\mathcal{S}^2(\mathcal{H}) \cap \mathcal{S}^{p_j}(\mathcal{H})$ is equipped with the $\|.\|_{p_j}$-norm, the $n$-linear mapping
$$
\Gamma^{U_1, \ldots, U_{n+1}}(f^{[n]}) : \left( \mathcal{S}^2(\mathcal{H}) \cap \mathcal{S}^{p_1}(\mathcal{H}) \right) \times \cdots \times \left( \mathcal{S}^2(\mathcal{H}) \cap \mathcal{S}^{p_n}(\mathcal{H}) \right) \rightarrow \mathcal{S}^p(\mathcal{H}),
$$
is bounded. In particular, by density, it uniquely extends to an element
$$\Gamma^{U_1,\ldots,U_{n+1}}(f^{[n]})\in\mathcal{B}_{n}(\Sp^{p_{1}}(\hilh)\times\cdots\times\Sp^{p_{n}}(\hilh),\Sp^p(\hilh)).$$
This result has been established for $n=1$ and a Lipschitz function $f$ on $\mathbb{T}$ in \cite[Theorem 2]{AyCoSu16}, and in \cite[Theorem 2.3]{CCGP} for a general $n\in \mathbb{N}$ and a function $f$ with continuous $n$th derivative $f^{(n)}$. The selfadjoint counterpart of this result, that is for an $n$-times differentable function $g : \mathbb{R} \to \mathbb{C}$ with bounded derivatives $g', \ldots, g^{(n)}$, has been proved in \cite[Theorem 2.7]{Co22}. We will need this fact in Section \ref{sectionselfadj} when considering functions of selfadjoint operators.\\

Let us start with the following Lemma which is the unitary analogue of \cite[Lemma $2.3$]{Co22}. It will be used throughout this paper. Note that it holds true even for normal operators, with the same proof.

\begin{lma}\label{LemmeUB}
Let $n \in \mathbb{N}$ and let $p_1, \ldots, p_n, p \in (1, \infty)$ be such that $\frac{1}{p}=\frac{1}{p_{1}}+\cdots+\frac{1}{p_{n}}$. Let $U_1, \ldots, U_{n+1}$ be unitary operators on $\mathcal{H}$. Let $(\varphi_k)_{k\geq 1}, \varphi \in L^{\infty}\left(\prod_{i=1}^{n+1}\lambda_{U_i}\right)$ and  assume that $(\varphi_k)_k$ is $w^*$-convergent to $\varphi$ and that there exists $C\geq 0$ such that, for every $k\geq 1$,
$$
\| \Gamma^{U_1, \ldots, U_{n+1}}(\varphi_k)\|_{\mathcal{B}_n(\mathcal{S}^{p_1} \times \cdots \times \mathcal{S}^{p_n}, \mathcal{S}^{p})} \leq C.
$$
Then $\Gamma^{U_1, \ldots, U_{n+1}}(\varphi) \in \mathcal{B}_n(\mathcal{S}^{p_1} \times \cdots \times \mathcal{S}^{p_n}, \mathcal{S}^p)$ and
\begin{align*}
\| \Gamma^{U_1, \ldots, U_{n+1}}(\varphi)\|_{\mathcal{B}_n(\mathcal{S}^{p_1} \times \cdots \times \mathcal{S}^{p_n}, \mathcal{S}^p)} \leq C.
\end{align*}
Moreover, for any $X_i \in \mathcal{S}^{p_i}(\mathcal{H}), 1 \leq i \leq n$,
$$
\left[ \Gamma^{U_1, \ldots, U_{n+1}}(\varphi_k) \right](X_1, \ldots, X_n) \underset{k \to \infty}{\longrightarrow} \left[ \Gamma^{U_1, \ldots, U_{n+1}}(\varphi) \right](X_1, \ldots, X_n)
$$
weakly in $\mathcal{S}^{p}(\mathcal{H})$.
\end{lma}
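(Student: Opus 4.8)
The plan is to leverage the only analytic input available, namely the $w^*$-continuity of $\Gamma^{U_1,\ldots,U_{n+1}}$, which guarantees that for $\mathcal{S}^2$-inputs $K_1,\ldots,K_n$ the values $[\Gamma^{U_1,\ldots,U_{n+1}}(\varphi_k)](K_1,\ldots,K_n)$ converge \emph{weakly in $\mathcal{S}^2$} to $[\Gamma^{U_1,\ldots,U_{n+1}}(\varphi)](K_1,\ldots,K_n)$. The whole difficulty is to upgrade this $\mathcal{S}^2$-weak convergence, valid only for $\mathcal{S}^2$-inputs, to $\mathcal{S}^p$-weak convergence for the $\mathcal{S}^{p_i}$-inputs required in the statement, while simultaneously controlling the multilinear norm.

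First I would isolate a purely functional-analytic fact. Writing $p'$ for the conjugate exponent of $p$, I claim that if $(y_k)_k$ is a sequence in $\mathcal{S}^2 \cap \mathcal{S}^p$ with $\sup_k \|y_k\|_p \leq M < \infty$ and $y_k \to y$ weakly in $\mathcal{S}^2$, then $y \in \mathcal{S}^p$, $\|y\|_p \leq M$, and $y_k \to y$ weakly in $\mathcal{S}^p$. The proof rests on the density of $\mathcal{S}^2 \cap \mathcal{S}^{p'}$ in $\mathcal{S}^{p'}$ (both contain the finite rank operators as a dense subspace) and on the duality $(\mathcal{S}^{p'})^* = \mathcal{S}^p$. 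Indeed, for $B \in \mathcal{S}^2 \cap \mathcal{S}^{p'}$ one has $\Tr(yB) = \lim_k \Tr(y_k B)$, whence $|\Tr(yB)| \leq M \|B\|_{p'}$; this bounded functional extends to $\mathcal{S}^{p'}$ and is represented by an element of $\mathcal{S}^p$ which, by testing against finite rank operators, equals $y$. The $\mathcal{S}^p$-weak convergence then follows from an $\varepsilon/3$-argument, approximating an arbitrary $B \in \mathcal{S}^{p'}$ by elements of $\mathcal{S}^2 \cap \mathcal{S}^{p'}$ and using the uniform bound $\sup_k\|y_k\|_p \leq M$ together with $\|y\|_p \leq M$.

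Next I would apply this to the inputs living in the intersection spaces. For $X_i \in \mathcal{S}^2 \cap \mathcal{S}^{p_i}$, set $y_k := [\Gamma^{U_1,\ldots,U_{n+1}}(\varphi_k)](X_1,\ldots,X_n)$. These lie in $\mathcal{S}^2 \cap \mathcal{S}^p$, satisfy $\|y_k\|_p \leq C\prod_i \|X_i\|_{p_i}$ by hypothesis, and converge weakly in $\mathcal{S}^2$ to $y := [\Gamma^{U_1,\ldots,U_{n+1}}(\varphi)](X_1,\ldots,X_n)$ by $w^*$-continuity. The functional-analytic fact then yields $\|[\Gamma^{U_1,\ldots,U_{n+1}}(\varphi)](X_1,\ldots,X_n)\|_p \leq C \prod_i \|X_i\|_{p_i}$ together with weak $\mathcal{S}^p$-convergence, for all $X_i \in \mathcal{S}^2 \cap \mathcal{S}^{p_i}$. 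Since $\mathcal{S}^2 \cap \mathcal{S}^{p_i}$ is dense in $\mathcal{S}^{p_i}$, the multilinear map $[\Gamma^{U_1,\ldots,U_{n+1}}(\varphi)]$ extends uniquely to an element of $\mathcal{B}_n(\mathcal{S}^{p_1}\times\cdots\times\mathcal{S}^{p_n},\mathcal{S}^p)$ of norm $\leq C$, which is the first assertion.

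It remains to obtain the weak $\mathcal{S}^p$-convergence for \emph{arbitrary} $X_i \in \mathcal{S}^{p_i}$, which I would handle by a second approximation. Approximate each $X_i$ in $\mathcal{S}^{p_i}$-norm by $X_i^{(m)} \in \mathcal{S}^2 \cap \mathcal{S}^{p_i}$. Because every $\Gamma^{U_1,\ldots,U_{n+1}}(\varphi_k)$ and $\Gamma^{U_1,\ldots,U_{n+1}}(\varphi)$ has norm $\leq C$, multilinearity and a telescoping estimate show that $[\Gamma^{U_1,\ldots,U_{n+1}}(\varphi_k)](X_1^{(m)},\ldots,X_n^{(m)})$ approximates $[\Gamma^{U_1,\ldots,U_{n+1}}(\varphi_k)](X_1,\ldots,X_n)$ in $\mathcal{S}^p$, \emph{uniformly in $k$}, as $m\to\infty$, and likewise for $\varphi$. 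Combining this uniform approximation with the already-established weak convergence for the inputs $X_i^{(m)}$ (for each fixed $m$) in a final $\varepsilon/3$-argument against test operators $B \in \mathcal{S}^{p'}$ delivers the claimed weak $\mathcal{S}^p$-convergence. The main obstacle throughout is exactly the second step: since $\Gamma^{U_1,\ldots,U_{n+1}}$ is only known to be $w^*$-continuous into $\mathcal{B}_n(\mathcal{S}^2)$, one cannot argue directly in $\mathcal{S}^p$ and must carefully transfer weak-$\mathcal{S}^2$ information to $\mathcal{S}^p$ via duality and the uniform $\mathcal{S}^p$-bound, with the density of the intersection spaces handling the bookkeeping.
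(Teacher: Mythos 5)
Your proof is correct and follows essentially the same route as the paper, which does not spell out an argument for Lemma \ref{LemmeUB} but instead invokes the proof of its selfadjoint analogue \cite[Lemma 2.3]{Co22}: there, exactly as in your proposal, the weak $\mathcal{S}^2$-convergence supplied by $w^*$-continuity is upgraded to $\mathcal{S}^p$ via the uniform norm bound, the duality $(\mathcal{S}^{p'})^{*}=\mathcal{S}^{p}$, and the density of $\mathcal{S}^2\cap\mathcal{S}^{p'}$ and $\mathcal{S}^2\cap\mathcal{S}^{p_i}$ in the respective Schatten classes. No gaps.
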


The following states that \cite[Theorem 2.3]{CCGP} remains true when we drop the assumption of continuity of $f^{(n)}$. It is crucial because it ensures that all the operators that will appear in the rest of the paper belong to $\mathcal{S}^p(\hilh)$.

\begin{thm}\label{MOISpBound}
Let $n\in\N$ and let $f : \mathbb{T} \to \mathbb{C}$ be $n$-times differentiable such that $f^{(n)}$ is bounded. Let $1<p,p_{j}<\infty$, $j=1,\ldots,n$ be such that $\frac{1}{p}=\frac{1}{p_{1}}+\cdots+\frac{1}{p_{n}}$. Let $U_1, \ldots, U_{n+1}$ be unitary operators on $\mathcal{H}$. Then
$$\Gamma^{U_1,\ldots,U_{n+1}}(f^{[n]})\in\mathcal{B}_{n}(\Sp^{p_{1}}(\hilh)\times\cdots\times\Sp^{p_{n}}(\hilh),\Sp^p(\hilh))$$
and there exists $C_{p,n}>0$ depending only on $p$ and $n$ such that
\begin{align}\label{MOIBound1}
\left\| \Gamma^{U_1,\ldots,U_{n+1}}(f^{[n]}) \right\|_{\mathcal{B}_{n}(\Sp^{p_{1}}(\hilh)\times\cdots\times\Sp^{p_{n}}(\hilh),\Sp^p(\hilh))} \leq C_{p,n}\left\|f^{(n)}\right\|_{L^{\infty}(\mathbb{T})}.
\end{align}
In particular, $\Gamma^{U_1,\ldots,U_{n+1}}(f^{[n]})\in\mathcal{B}_{n}(\Sp^{p}(\hilh))$, with
\begin{align}\label{MOIBound2}
\left\|\Gamma^{U_1,\ldots,U_{n+1}}(f^{[n]})\right\|_{\mathcal{B}_{n}(\Sp^{p})}\leq C_{p,n} \left\|f^{(n)}\right\|_{L^{\infty}(\mathbb{T})}.
\end{align}
\end{thm}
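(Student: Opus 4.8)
The plan is to reduce to the already-known continuously differentiable case and then pass to a weak-$*$ limit. Recall from the discussion preceding the statement that \cite[Theorem 2.3]{CCGP} establishes \eqref{MOIBound1} whenever $f\in C^n(\mathbb{T})$; since every trigonometric polynomial lies in $C^n(\mathbb{T})$, one can hope to reach a general $n$-times differentiable $f$ with bounded $f^{(n)}$ by approximating it with the trigonometric polynomials provided by Lemma \ref{approxdivdiff} and invoking the weak-$*$ stability of multiple operator integrals recorded in Lemma \ref{LemmeUB}.

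Concretely, I would fix $p,p_1,\dots,p_n$ and $U_1,\dots,U_{n+1}$ as in the statement and let $(f_j)_j$ be the trigonometric polynomials of Lemma \ref{approxdivdiff}. Applying \cite[Theorem 2.3]{CCGP} to each $f_j$ and then invoking part $(3)$ of Lemma \ref{approxdivdiff} gives
\[
\bigl\| \Gamma^{U_1,\dots,U_{n+1}}(f_j^{[n]}) \bigr\|_{\mathcal{B}_n(\mathcal{S}^{p_1}\times\cdots\times\mathcal{S}^{p_n},\mathcal{S}^p)} \leq C_{p,n}\,\|f_j^{(n)}\|_{L^\infty(\mathbb{T})} \leq C_{p,n}\,\|f^{(n)}\|_{L^\infty(\mathbb{T})},
\]
so that these multiple operator integrals are uniformly bounded by $C:=C_{p,n}\|f^{(n)}\|_{L^\infty(\mathbb{T})}$. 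In view of Lemma \ref{LemmeUB}, it then suffices to prove that $(f_j^{[n]})_j$ converges to $f^{[n]}$ in the weak-$*$ topology of $L^\infty\bigl(\prod_{i=1}^{n+1}\lambda_{U_i}\bigr)$: granting this, Lemma \ref{LemmeUB} yields $\Gamma^{U_1,\dots,U_{n+1}}(f^{[n]})\in\mathcal{B}_n(\mathcal{S}^{p_1}\times\cdots\times\mathcal{S}^{p_n},\mathcal{S}^p)$ together with \eqref{MOIBound1}. The \enquote{in particular} estimate \eqref{MOIBound2} then follows from \eqref{MOIBound1} by choosing $p_1=\cdots=p_n=np$ and restricting along the contractive inclusions $\mathcal{S}^p(\hilh)\hookrightarrow\mathcal{S}^{np}(\hilh)$.

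To obtain the weak-$*$ convergence I would set $\mu:=\prod_{i=1}^{n+1}\lambda_{U_i}$, which is a finite measure, and test against an arbitrary $g\in L^1(\mu)$. Since part $(3)$ of Lemma \ref{approxdivdiff} together with \eqref{majdivdiff} bounds $(f_j^{[n]})_j$ uniformly in $L^\infty(\mu)$, the dominated convergence theorem reduces the problem to showing that $f_j^{[n]}\to f^{[n]}$ holds $\mu$-almost everywhere. Off the diagonal $D=\{(\lambda,\dots,\lambda)\}$ this is exactly part $(2)$ of Lemma \ref{approxdivdiff}. On $D$, the restriction of $\mu$ is purely atomic and is carried by the at most countably many common atoms $\lambda_0$ of $\lambda_{U_1},\dots,\lambda_{U_{n+1}}$; since $f_j^{[n]}(\lambda_0,\dots,\lambda_0)=\tfrac1{n!}f_j^{(n)}(\lambda_0)$ and likewise for $f$, what remains is the pointwise convergence $f_j^{(n)}(\lambda_0)\to f^{(n)}(\lambda_0)$ at each such point.

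This last statement on the diagonal is where I expect the main difficulty, and it is precisely the step that requires $f$ to be $n$-times differentiable at \emph{every} point of $\mathbb{T}$ rather than merely almost everywhere. The obstruction is that $f_j^{(n)}$ is the convolution of the only-bounded function $f^{(n)}$ with the twisted Fej\'er kernels $F_{j,n}(t)=e^{-int}F_j(t)$ appearing in the proof of Lemma \ref{approxdivdiff}; these are not positive summability kernels, so a priori their means need not converge at a prescribed point. The resolution I would pursue is to verify that everywhere-differentiability forces each $\lambda_0$ to be a genuine point of convergence: the symmetric average of $f^{(n)}$ over a shrinking arc around $\lambda_0$ is a difference quotient of $f^{(n-1)}$, which tends to $f^{(n)}(\lambda_0)$ precisely because $f^{(n-1)}$ is differentiable at $\lambda_0$, while the modulation $e^{-int}\to1$ is harmless on the shrinking support where $F_j$ concentrates its mass, so that $f_j^{(n)}(\lambda_0)\to f^{(n)}(\lambda_0)$. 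Establishing this convergence on the diagonal cleanly---either directly or by upgrading part $(2)$ of Lemma \ref{approxdivdiff} to hold on all of $\mathbb{T}^{n+1}$---is the technical heart of the argument; once it is in place, everything else is the assembly of the approximation scheme around Lemmas \ref{LemmeUB} and \ref{approxdivdiff}.
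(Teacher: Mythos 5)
Your overall scaffolding (uniform bound from \cite[Theorem 2.3]{CCGP} plus Lemma \ref{approxdivdiff}~$(3)$, weak-$*$ convergence via dominated convergence, passage to the limit by Lemma \ref{LemmeUB}, and the choice $p_1=\cdots=p_n=np$ for \eqref{MOIBound2}) is sound and matches the paper's framework. The gap is exactly where you locate it, and your proposed resolution does not close it. You need $f_j^{(n)}(\lambda_0)=(f^{(n)}\ast F_{j,n})(\lambda_0)\to f^{(n)}(\lambda_0)$ at each common atom $\lambda_0$, and you argue this from the fact that the symmetric averages $\frac{1}{2h}\int_{-h}^{h}f^{(n)}(\theta_0+t)\,dt=\frac{f^{(n-1)}(\theta_0+h)-f^{(n-1)}(\theta_0-h)}{2h}$ converge to $f^{(n)}(\theta_0)$. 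That last convergence is correct, but it does not imply convergence of the Fej\'er means. The Fej\'er kernel is not monotone on $[0,\pi]$ (it vanishes at $t=2k\pi/(j+1)$), so it is not a positive superposition of symmetric box kernels, and the standard transfer fails: writing $\Phi(h)=\int_0^h\bigl(f^{(n)}(\theta_0+t)+f^{(n)}(\theta_0-t)-2f^{(n)}(\theta_0)\bigr)dt$ and integrating by parts against $F_j$, the tail term is controlled only by $\int_{1/j}^{\pi}|\Phi(t)|\,|F_j'(t)|\,dt\lesssim\int_{1/j}^{\delta}\frac{o(t)}{t^2}\,dt$, which produces a factor $\log j$ and does not tend to $0$. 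Classically, the hypothesis $\Phi(h)=o(h)$ (without absolute values) guarantees $(C,\alpha)$-summability for $\alpha>1$ and Abel summability, but not $(C,1)$-summability; what one needs for Fej\'er means is the Lebesgue-point condition $\int_0^h|f^{(n)}(\theta_0\pm t)-f^{(n)}(\theta_0)|\,dt=o(h)$, and a merely bounded, everywhere-existing derivative need not satisfy it at a prescribed point. The modulation $e^{-int}$ only aggravates this, since removing it again requires the absolute-value (Lebesgue-point) estimate.

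The paper circumvents the difficulty rather than solving it: it splits $f^{[n]}=f^{[n]}\chi_{\Delta}+f^{[n]}(1-\chi_{\Delta})$ along the diagonal $\Delta$. The diagonal piece equals $\frac{1}{n!}f^{(n)}(\lambda_1)\chi_{\Delta}$ and is bounded on $\Sp^{p_1}\times\cdots\times\Sp^{p_n}$ by a direct discretization argument (Lemma \ref{boundeddiag}), using that the block-diagonal compression $K\mapsto\sum_k P_{m,k}KP_{m,k}$ is an $\Sp^{q}$-contraction; no pointwise convergence of $f_j^{(n)}$ on the diagonal is ever needed. For the off-diagonal piece, $f_j^{[n]}(1-\chi_{\Delta})\to f^{[n]}(1-\chi_{\Delta})$ pointwise \emph{everywhere} (trivially on $\Delta$, by Lemma \ref{approxdivdiff}~$(2)$ off it), with the uniform bound $d_n\|f^{(n)}\|_{L^{\infty}(\mathbb{T})}$, so Lemma \ref{LemmeUB} applies. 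Note also that replacing Lemma \ref{approxdivdiff} by Lemma \ref{approxdivdiff2} (whose approximants do converge on the diagonal, being one-sided difference quotients) would not rescue your route either, since its derivative bound is only some constant $M$ involving lower-order derivatives of $f$, which would destroy the stated estimate $C_{p,n}\|f^{(n)}\|_{L^{\infty}(\mathbb{T})}$. You should adopt the diagonal/off-diagonal decomposition.
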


Before proving this Theorem, we will need the following Lemma. It is certainly well-known to specialists but we include a proof for the convenience of the reader.

\begin{lma}\label{boundeddiag}
Let $n\in\N$ and let $f : \mathbb{T} \to \mathbb{C}$ be $n$-times differentiable such that $f^{(n)}$ is bounded. Let $1<p,p_{j}<\infty$, $j=1,\ldots,n$ be such that $\frac{1}{p}=\frac{1}{p_{1}}+\cdots+\frac{1}{p_{n}}$. Let $U_1, \ldots, U_{n+1}$ be unitary operators on $\mathcal{H}$. Let $\Delta:=\{ (\lambda_1,\ldots, \lambda_{n+1}) \mid \lambda_1=\cdots=\lambda_{n+1} \}$ be the diagonal of $\mathbb{T}^{n+1}$. Then $\Gamma^{U_1,\ldots,U_{n+1}}(f^{[n]} \chi_{\Delta}) \in \mathcal{B}_{n}(\Sp^{p_{1}}(\hilh)\times\cdots\times\Sp^{p_{n}}(\hilh),\Sp^p(\hilh))$ and
$$\left\| \Gamma^{U_1,\ldots,U_{n+1}}(f^{[n]} \chi_{\Delta}) \right\| \leq \frac{1}{n!} \|f^{(n)}\|_{L^{\infty}(\mathbb{T})}.$$
\end{lma}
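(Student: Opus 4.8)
The plan is to identify $f^{[n]}\chi_\Delta$ with the symbol $g\,\chi_\Delta$, where $g:=\frac1{n!}f^{(n)}$, using the classical fact that the divided difference at a coincident point satisfies $f^{[n]}(\lambda,\ldots,\lambda)=\frac1{n!}f^{(n)}(\lambda)$ (already used in the proof of Lemma~\ref{approxdivdiff2}); off the diagonal both symbols vanish, so they coincide as elements of $L^\infty\bigl(\prod_i\lambda_{U_i}\bigr)$. I would then approximate $g\,\chi_\Delta$ in the $w^*$-topology by simple tensors adapted to finite Borel partitions of $\mathbb{T}$, and transfer a uniform bound through Lemma~\ref{LemmeUB}.

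Concretely, for a finite Borel partition $\{B_k\}_k$ of $\mathbb{T}$ with sample points $\lambda_k\in B_k$, set $\varphi_{\mathcal P}:=\sum_k g(\lambda_k)\,\chi_{B_k}\otimes\cdots\otimes\chi_{B_k}$. Since this is a finite sum of elementary tensors, the defining formula for $\Gamma$ on tensors gives the exact expression
\[
\bigl[\Gamma^{U_1,\ldots,U_{n+1}}(\varphi_{\mathcal P})\bigr](K_1,\ldots,K_n)=\sum_k g(\lambda_k)\,E^{U_1}(B_k)K_1E^{U_2}(B_k)\cdots K_nE^{U_{n+1}}(B_k).
\]
Writing $F_k^{(i)}:=E^{U_i}(B_k)$ and factoring out the operator $D:=\sum_k g(\lambda_k)F_k^{(1)}$, of norm $\le\|g\|_{L^\infty(\mathbb{T})}$, this equals $D\,P_{\mathcal P}(K_1,\ldots,K_n)$, where $P_{\mathcal P}(K_1,\ldots,K_n):=\sum_k F_k^{(1)}K_1F_k^{(2)}\cdots K_nF_k^{(n+1)}$ is a multilinear ``diagonal pinching''. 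The core estimate is that $P_{\mathcal P}$ is a contraction from $\mathcal S^{p_1}\times\cdots\times\mathcal S^{p_n}$ into $\mathcal S^p$: setting $V_i:=\sum_k\omega^k F_k^{(i)}$ with $\omega=e^{2\pi i/N}$ ($N$ the number of blocks), these are unitaries, and averaging over roots of unity recovers $P_{\mathcal P}$ as a mean of operators of the form $V_1^{a_1}K_1V_2^{a_2}\cdots K_nV_{n+1}^{a_{n+1}}$, each of which has $\mathcal S^p$-norm at most $\prod_i\|K_i\|_{p_i}$ by the Hölder inequality for Schatten norms together with unitary invariance. Hence $\|P_{\mathcal P}\|\le 1$ and $\|\Gamma^{U_1,\ldots,U_{n+1}}(\varphi_{\mathcal P})\|\le\|D\|\,\|P_{\mathcal P}\|\le\frac1{n!}\|f^{(n)}\|_{L^\infty(\mathbb{T})}$, uniformly in $\mathcal P$.

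It then remains to produce partitions $\mathcal P_m$ along which $\varphi_{\mathcal P_m}\to g\,\chi_\Delta$ in $w^*$. I would take $\mathcal P_m$ to isolate the first $m$ points of the countable set of atoms of $\lambda_{U_1},\ldots,\lambda_{U_{n+1}}$ as singleton blocks---sampling $\lambda_k$ at the atom itself there---and to cover the rest of $\mathbb{T}$ by blocks of diameter $<1/m$. Then $\varphi_{\mathcal P_m}\to g\,\chi_\Delta$ pointwise $\bigl(\prod_i\lambda_{U_i}\bigr)$-almost everywhere: off the diagonal the terms eventually vanish, on the diagonal at a common atom the sample value is exactly $g(\lambda)$, and the remaining diagonal points form a set of product-measure zero, since the diagonal carries $\bigl(\prod_i\lambda_{U_i}\bigr)$-mass only on common atoms. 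As $\|\varphi_{\mathcal P_m}\|_{L^\infty}\le\|g\|_{L^\infty(\mathbb{T})}$, dominated convergence yields $w^*$-convergence, and Lemma~\ref{LemmeUB} transfers the uniform bound, giving $\Gamma^{U_1,\ldots,U_{n+1}}(f^{[n]}\chi_\Delta)\in\mathcal B_n(\mathcal S^{p_1}\times\cdots\times\mathcal S^{p_n},\mathcal S^p)$ with norm $\le\frac1{n!}\|f^{(n)}\|_{L^\infty(\mathbb{T})}$. The main obstacle is precisely this last $w^*$-convergence when $f^{(n)}$ is merely bounded: continuity of $g$ fails, so the sample values $g(\lambda_k)$ need not converge to $g(\lambda)$ on the diagonal, and the argument survives only because the diagonal is supported, for the product spectral measure, on common atoms, which can be sampled exactly.
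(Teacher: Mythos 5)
Your proof is correct, and it takes a genuinely different route from the paper's at the one point where the lemma is actually delicate. The paper proceeds in two approximation layers: it first replaces $f^{(n)}$ by continuous functions $g_k$ with $\|g_k\|_\infty\le\|f^{(n)}\|_\infty$ converging pointwise, and then, for each \emph{continuous} symbol, runs exactly your partition/pinching argument with dyadic arcs and arbitrary sample points -- continuity is what makes the Riemann-sum symbols converge pointwise \emph{everywhere} on the diagonal. You collapse this to a single layer by adapting the partitions to the atoms of the scalar spectral measures and invoking the observation (correct, by Fubini) that $\bigl(\prod_i\lambda_{U_i}\bigr)\big|_\Delta$ is purely atomic and carried by the common atoms, so that exact sampling at atoms plus a null set is enough for $w^*$-convergence of the simple symbols directly to $\tfrac1{n!}f^{(n)}(\lambda_1)\chi_\Delta$. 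The core contraction estimate is the same in substance: the paper averages $U_m(t)KV_m(t)$ over $t\in[0,2\pi]$ one partial pinching at a time and then factors the full pinching as a product of these, while you realize the full multilinear pinching in one shot as a mean over roots of unity of terms $V_1^{a_1}K_1V_2^{a_2}\cdots K_nV_{n+1}^{a_{n+1}}$ and apply H\"older with unitary invariance; both give $\|P_{\mathcal P}\|\le1$, and both proofs then conclude via Lemma~\ref{LemmeUB}. What your approach buys is the elimination of the continuous-approximation step (and with it any appeal to a specific mollification of $f^{(n)}$); what it costs is the measure-theoretic analysis of the diagonal, which the paper's continuity reduction lets it avoid. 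Your closing diagnosis of where the argument could fail for merely bounded $f^{(n)}$, and why it does not, is exactly right.
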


\begin{proof}
Let $(g_k)_k$ be a sequence of continuous functions converging pointwise to $f^{(n)}$ on $\mathbb{T}^{n+1}$ and such that for every $k$, $\|g_k \|_{L^{\infty}(\mathbb{T})} \leq \| f^{(n)} \|_{L^{\infty}(\mathbb{T})}$ (take e.g. $g_k(z) =  \frac{k(f^{(n-1)}(ze^{i/k})-f^{(n-1)}(z))}{iz}$). Let $\tilde{g_k}$ be defined, for any $(\lambda_1,\ldots, \lambda_{n+1}) \in \mathbb{T}^{n+1}$, by
$$\tilde{g_k}(\lambda_1,\ldots, \lambda_{n+1}) = \frac{1}{n!} g_k(\lambda_1)\chi_{\Delta}(\lambda_1,\ldots, \lambda_{n+1}).$$
Note that
$$
(f^{[n]}\chi_{\Delta})(\lambda_1,\ldots,\lambda_{n+1})=\frac{1}{n!}f^{(n)}(\lambda_1) \chi_{\Delta}(\lambda_1,\ldots,\lambda_{n+1}).
$$
Hence, by the Lebesgue's dominated convergence theorem, $(\tilde{g_k})_k$ $w^*$-converges to $f^{[n]}\chi_{\Delta}$ for the $w^*$-topology of $L^{\infty}(\lambda_{U_1} \times \cdots \times \lambda_{U_{n+1}})$.  In particular, to prove the Lemma, it suffices, according to Lemma \ref{LemmeUB}, to prove that $\Gamma^{U_1,\ldots,U_{n+1}}(\tilde{g_k}) \in \mathcal{B}_{n}(\Sp^{p_{1}}(\hilh)\times\cdots\times\Sp^{p_{n}}(\hilh),\Sp^p(\hilh))$ with norm less than $\frac{1}{n!}\|f^{(n)}\|_{L^{\infty}(\mathbb{T})}$.
To simplify the notations, set $h:=\frac{1}{n!}g_r$ and $\tilde{h}:=\tilde{g_r}$ for some fixed $r\in \mathbb{N}$.

Let $m\in \mathbb{N}$. Let $A_{m,k}:=\{ e^{2i\pi t} \mid \frac{k}{2^m} \leq t < \frac{k+1}{2^m} \}$ and define $P^j_{m,k} := E^{U_j}(A_{m,k})$. Then, for every $1\leq j \leq n+1$, $\displaystyle \sum_{k=0}^{2^m-1} P^j_{m,k} = I_{\hilh}$. Let $2\leq q \leq n$ and let $K \in \mathcal{S}^{p_q}(\hilh)$. Define $\displaystyle U_m(t) =\sum_{k=0}^{2^m-1} e^{ikt} P^q_{m,k}$ and $\displaystyle V_m(t) =\sum_{k=0}^{2^m-1} e^{-ikt} P^{q+1}_{m,k}$. For every $t\in \mathbb{R}$, $U_m(t)$ and $V_m(t)$ are unitaries on $\mathcal{H}$ and we have
$$
U_m(t)KV_m(t) =  \sum_{k,l=0}^{2^m-1} e^{i(k-l)t} P^q_{m,k} K P^{q+1}_{m,l},
$$
so that
$$
\frac{1}{2\pi} \int_0^{2\pi} U_m(t)KV_m(t) dt = \sum_{k=0}^{2^m-1}  P^q_{m,k} K P^{q+1}_{m,k},
$$
which in turns yields
\begin{equation}\label{estimate1}
\left\| \sum_{k=0}^{2^m-1} P^q_{m,k} K P^{q+1}_{m,k} \right\|_{p_q} \leq \frac{1}{2\pi}  \int_0^{2\pi} \|U_m(t)KV_m(t) \|_{p_q} dt = \| K \|_{p_q}.
\end{equation}
In the case $q=1$, one defines $\displaystyle \widetilde{U}_m(t):= \sum_{k=0}^{2^m-1} h\left( e^{\frac{ik}{2^m}} \right) e^{ikt} P^1_{m,k}$ and $\displaystyle V_m(t) =\sum_{k=0}^{2^m-1} e^{-ikt} P^2_{m,k}$. Then $\| \widetilde{U}_m(t) \| \leq \|h\|_{L^{\infty}(\mathbb{T})}$ and proceeding as above, we get the estimate
\begin{equation}\label{estimate2}
\left\| \sum_{k=0}^{2^m-1} h\left( e^{\frac{ik}{2^m}} \right) P^1_{m,k} K P^2_{m,k} \right\|_{p_1} \leq \|h\|_{L^{\infty}(\mathbb{T})} \| K \|_{p_1} \leq \frac{1}{n!}\|f^{(n)}\|_{L^{\infty}(\mathbb{T})} \|K\|_{p_1}.
\end{equation}
Next, let $\displaystyle E_{m,k} = \prod_{i=1}^{n+1} A_{m,k}$ be the Cartesian product of $n+1$ copies of $A_{m,k}$. Define
$$\varphi_m := \sum_{k=0}^{2^m-1} h\left( e^{\frac{ik}{2^m}} \right) \chi_{E_{m,k}}.$$
Let, for every $1\leq i \leq n$, $K_i \in \mathcal{S}^{p_i}(\hilh)$. We have, by definition of multiple operator integrals and by orthogonality,
\begin{align*}
& \left[\Gamma^{U_1,\ldots, U_{n+1}}(\varphi_m)\right](K_1, \ldots, K_n)
 = \sum_{k=0}^{2^m-1} h\left( e^{\frac{ik}{2^m}} \right) P^1_{m,k} K_1 P^2_{m,k} \cdots  P^n_{m,k} K_n P^{n+1}_{m,k} \\
&\hspace{0.5cm} = \left( \sum_{k=0}^{2^m-1} h\left( e^{\frac{ik}{2^m}} \right) P^1_{m,k} K_1 P^2_{m,k} \right) \left( \sum_{k=0}^{2^m-1}  P^2_{m,k} K_2 P^3_{m,k} \right) \cdots \left( \sum_{k=0}^{2^m-1}  P^n_{m,k} K_n P^{n+1}_{m,k} \right).
\end{align*}
It follows from \eqref{estimate1} and \eqref{estimate2} that
$$
\left\| \left[\Gamma^{U_1,\ldots, U_{n+1}}(\varphi_m)\right](K_1, \ldots, K_n) \right\|_p \leq \frac{1}{n!}\|f^{(n)}\|_{L^{\infty}(\mathbb{T})} \|K_1\|_{p_1} \cdots \|K_n\|_{p_n}.
$$
To conclude the proof, notice that $\| \varphi \|_{L^{\infty}(\mathbb{T}^{n+1})} \leq \| \tilde{h} \|_{L^{\infty}(\mathbb{T}^{n+1})}$, and by continuity of $h$, $\varphi_m \underset{m\to +\infty}{\longrightarrow} \tilde{h}=\tilde{g_r}$ pointwise on $\mathbb{T}^{n+1}$. Hence, by the Lebesgue's dominated convergence theorem, $(\varphi_m)_m$ $w^*$-converges to $h$ for the $w^*$-topology of $L^{\infty}(\lambda_{U_1} \times \cdots \times \lambda_{U_{n+1}})$. Using Lemma \ref{LemmeUB}, we get that
$$\left\| \Gamma^{U_1,\ldots,U_{n+1}}(\tilde{g_r}) \right\| \leq \frac{1}{n!} \|f^{(n)}\|_{L^{\infty}(\mathbb{T})},$$
and the conclusion of the Lemma follows.
\end{proof}

We now turn to the proof of Theorem \ref{MOISpBound}.

\begin{proof}[Proof of Theorem \ref{MOISpBound}] Let $\Delta:=\{ (\lambda_1,\ldots, \lambda_{n+1} \mid \lambda_1=\cdots=\lambda_{n+1} \}$. By Lemma \ref{boundeddiag},
$$
\Gamma^{U_1,\ldots,U_{n+1}}(f^{[n]}\chi_{\Delta})\in \mathcal{B}_{n}(\Sp^{p_{1}}(\hilh)\times\cdots\times\Sp^{p_{n}}(\hilh),\Sp^p(\hilh))
$$
with a norm less than or equal to $\frac{1}{n!} \|f^{(n)}\|_{L^{\infty}(\mathbb{T})}$. Hence, it suffices to show the boundedness of $\Gamma^{U_1,\ldots,U_{n+1}}(f^{[n]}(1-\chi_{\Delta}))$.
Let $(f_j)_j$ be the sequence of trigonometric polynomials given by Lemma \ref{approxdivdiff}. Let $T_j := \Gamma^{U_1,\ldots,U_{n+1}}(f_j^{[n]})$ and $\tilde{T_j}:= \Gamma^{U_1,\ldots,U_{n+1}}(f_j^{[n]}\chi_{\Delta}).$ According to \cite[Theorem 2.3]{CCGP} and Lemma \ref{boundeddiag},
$$
T_j, \tilde{T_j} \in \mathcal{B}_{n}(\Sp^{p_{1}}(\hilh)\times\cdots\times\Sp^{p_{n}}(\hilh),\Sp^p(\hilh)), 
$$
and there exists a constant $c_{p,n}>0$ such that 
$$
\|T_j \| \leq c_{p,n}\left\|f_j^{(n)}\right\|_{L^{\infty}(\mathbb{T})} \leq c_{p,n}\left\|f^{(n)}\right\|_{L^{\infty}(\mathbb{T})}
$$
and
$$
\|\tilde{T_j} \| \leq \frac{1}{n!} \left\|f_j^{(n)}\right\|_{L^{\infty}(\mathbb{T})} \leq \frac{1}{n!} \left\|f^{(n)}\right\|_{L^{\infty}(\mathbb{T})}.
$$
Notice that $1-\chi_{\Delta}=\chi_{\Omega}$ where $\Omega := \mathbb{T}^{n+1}\setminus \Delta$, so that, according to Lemma \ref{approxdivdiff}, $f_j^{[n]}(1-\chi_{\Delta})$ is pointwise convergent to $f^{[n]}(1-\chi_{\Delta})$ and
$$
\| f_j^{[n]}(1-\chi_{\Delta}) \|_{L^{\infty}(\mathbb{T}^{n+1})} \leq  \| f_j^{[n]}\|_{L^{\infty}(\mathbb{T}^{n+1})}  \leq d_n \|f^{(n)}\|_{L^{\infty}(\mathbb{T})}.
$$
Hence, $(f_j^{[n]}(1-\chi_{\Delta}))_j$ $w^*$-converges to $f^{[n]}(1-\chi_{\Delta})$ for the $w^*$-topology of $L^{\infty}(\lambda_{U_1} \times \cdots \times \lambda_{U_{n+1}})$. Since $$
\left\| \Gamma^{U_1,\ldots,U_{n+1}}(f_j^{[n]}(1-\chi_{\Delta})) \right\|  = \left\| T_j - \tilde{T_j}\right\| \leq \left( c_{p,n} + \frac{1}{n!} \right) \left\|f^{(n)}\right\|_{L^{\infty}(\mathbb{T})},
$$
it follows from Lemma \ref{LemmeUB} that $\Gamma^{U_1,\ldots,U_{n+1}}(f^{[n]}(1-\chi_{\Delta})) \in \mathcal{B}_{n}(\Sp^{p_{1}}(\hilh)\times\cdots\times\Sp^{p_{n}}(\hilh),\Sp^p(\hilh))$ and
$$
\left\| \Gamma^{U_1,\ldots,U_{n+1}}(f^{[n]}(1-\chi_{\Delta})) \right\| \leq \left( c_{p,n} + \frac{1}{n!} \right) \left\|f^{(n)}\right\|_{L^{\infty}(\mathbb{T})}.
$$
This concludes the proof of the Theorem.
\end{proof}

The next proposition is a crucial perturbation formula. It is key whenever the differentiability of operator functions with $S^p$-perturbation is studied. In the unitary settting, it generalizes \cite[Proposition 3.5]{CCGP} where the result was proved when the function $f$ is an element of $C^n(\mathbb{T})$.

\begin{ppsn}\label{PerturbationFormula}
Let $1<p<\infty$ and $n\ge 2$ be an integer. Let $U_1,\ldots,U_{n-1},U,V \in \mathcal{U}(\hilh)$ be such that $U-V\in\Sp^p(\hilh)$. Let $f : \mathbb{T} \to \mathbb{C}$ be $n$-times differentiable on $\mathbb{T}$ such that $f^{(n)}$ is bounded. Then, for all $K_1,\ldots,K_{n-1}\in\Sp^p(\hilh)$ and for any $1\le i\le n$ we have
\begin{align*}
&\left[\Gamma^{U_1,\ldots,U_{i-1},U,U_i,\ldots,U_{n-1}}(f^{[n-1]})-\Gamma^{U_1,\ldots,U_{i-1},V,U_i,\ldots,U_{n-1}}(f^{[n-1]})\right](K_1,\ldots,K_{n-1})\\
&\hspace*{0.5cm}=\left[\Gamma^{U_1,\ldots,U_{i-1},U,V,U_i,\ldots,U_{n-1}}(f^{[n]})\right](K_1,\ldots,K_{i-1},U-V,K_i,\ldots,K_{n-1}).
\end{align*}
\end{ppsn}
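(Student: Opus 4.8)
The plan is to reduce to the already-established $C^n(\mathbb{T})$ case, namely \cite[Proposition 3.5]{CCGP}, by an approximation argument, passing to the limit in the weak topology of the Schatten classes. I first record that both sides are well defined. Since $f$ is $n$-times differentiable, $f^{(n-1)}$ is differentiable, hence continuous and therefore bounded on the compact set $\mathbb{T}$; thus Theorem \ref{MOISpBound} applied at order $n-1$ shows that $\Gamma^{U_1,\ldots,U_{i-1},W,U_i,\ldots,U_{n-1}}(f^{[n-1]})$ is a bounded $(n-1)$-linear map for $W\in\{U,V\}$, and applied at order $n$ it gives boundedness of $\Gamma^{U_1,\ldots,U_{i-1},U,V,U_i,\ldots,U_{n-1}}(f^{[n]})$. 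With inputs in $\Sp^p(\hilh)$, Hölder's inequality places the two sides in $\Sp^{p/(n-1)}(\hilh)$ and $\Sp^{p/n}(\hilh)$ respectively, both continuously embedded in $\Sp^p(\hilh)$.

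Let $(f_j)_j\subset C^n(\mathbb{T})$ be the sequence given by Lemma \ref{approxdivdiff2}, so that $f_j^{[n-1]}\to f^{[n-1]}$ uniformly on $\mathbb{T}^n$, $f_j^{[n]}\to f^{[n]}$ pointwise on all of $\mathbb{T}^{n+1}$, and $\sup_j\|f_j^{(k)}\|_{L^\infty(\mathbb{T})}<\infty$ for $1\le k\le n$. As each $f_j$ belongs to $C^n(\mathbb{T})$, \cite[Proposition 3.5]{CCGP} gives, for all $K_1,\ldots,K_{n-1}\in\Sp^p(\hilh)$,
\begin{align*}
&\left[\Gamma^{U_1,\ldots,U_{i-1},U,U_i,\ldots,U_{n-1}}(f_j^{[n-1]})-\Gamma^{U_1,\ldots,U_{i-1},V,U_i,\ldots,U_{n-1}}(f_j^{[n-1]})\right](K_1,\ldots,K_{n-1})\\
&\hspace*{0.5cm}=\left[\Gamma^{U_1,\ldots,U_{i-1},U,V,U_i,\ldots,U_{n-1}}(f_j^{[n]})\right](K_1,\ldots,K_{i-1},U-V,K_i,\ldots,K_{n-1}).
\end{align*}

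It remains to let $j\to\infty$ on both sides, which I would do through Lemma \ref{LemmeUB}. For the left-hand side, the uniform convergence $f_j^{[n-1]}\to f^{[n-1]}$ gives $w^*$-convergence of the symbols for each of the two operator tuples, while Lemma \ref{approxdivdiff2}(3) bounds $\|f_j^{(n-1)}\|_{L^\infty(\mathbb{T})}$ uniformly, so Theorem \ref{MOISpBound} bounds the associated multiple operator integrals uniformly; Lemma \ref{LemmeUB} then provides weak convergence of each term in $\Sp^{p/(n-1)}(\hilh)$, and hence in $\Sp^p(\hilh)$. For the right-hand side, the pointwise convergence $f_j^{[n]}\to f^{[n]}$ everywhere, together with the uniform bound $\|f_j^{[n]}\|_{L^\infty(\mathbb{T}^{n+1})}\le d_n\|f_j^{(n)}\|_{L^\infty(\mathbb{T})}$ from \eqref{majdivdiff} and Lemma \ref{approxdivdiff2}(3), yields $w^*$-convergence of $f_j^{[n]}$ to $f^{[n]}$ by dominated convergence and, via Theorem \ref{MOISpBound}, a uniform bound on the corresponding multiple operator integrals; a further application of Lemma \ref{LemmeUB} gives weak convergence in $\Sp^{p/n}(\hilh)\hookrightarrow\Sp^p(\hilh)$. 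Since the two sides agree for every $j$ and weak limits in the reflexive space $\Sp^p(\hilh)$ are unique, the identity passes to the limit, proving the proposition.

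The crux is the $w^*$-convergence of $f_j^{[n]}$ to $f^{[n]}$, which requires pointwise convergence almost everywhere with respect to the product of the scalar spectral measures, and in particular on the diagonal $\Delta=\{\lambda_1=\cdots=\lambda_{n+1}\}$. This set may carry positive mass when the unitaries involved share eigenvalues, and there $f_j^{[n]}$ reduces to $\frac{1}{n!}f_j^{(n)}$. This is precisely why I would use Lemma \ref{approxdivdiff2}, which delivers pointwise convergence of $f_j^{[n]}$ (equivalently of $f_j^{(n)}$) everywhere, rather than Lemma \ref{approxdivdiff}, whose pointwise convergence is only guaranteed off $\Delta$.
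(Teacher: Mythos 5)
Your proof is correct and follows essentially the same route as the paper's: approximate $f$ by the sequence of Lemma \ref{approxdivdiff2}, apply the $C^n(\mathbb{T})$ perturbation formula to each $f_j$, and pass to the limit weakly in $\mathcal{S}^p(\hilh)$ via the $w^*$-convergence of the symbols and Lemma \ref{LemmeUB}, with your closing observation about the diagonal being exactly the reason the paper uses Lemma \ref{approxdivdiff2} rather than Lemma \ref{approxdivdiff}. The only (immaterial) differences are that the paper obtains norm convergence of the left-hand side from the uniform convergence of $f_j^{(n-1)}$ rather than merely weak convergence, and that your aside placing the two sides in $\mathcal{S}^{p/(n-1)}$ and $\mathcal{S}^{p/n}$ is unnecessary (and not what Theorem \ref{MOISpBound} gives for arbitrary $K_i\in\mathcal{S}^p$); membership in $\mathcal{S}^p$ follows directly from \eqref{MOIBound2}.
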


\begin{proof} Fix $1\le i\le n$ and let $(f_j)_j$ be the sequence of elements of $C^n(\mathbb{T})$ given by Lemma \ref{approxdivdiff2}. By \cite[Proposition 2.5]{CCGP}, we have
\begin{align}
\nonumber &\left[\Gamma^{U_1,\ldots,U_{i-1},U,U_i,\ldots,U_{n-1}}(f_j^{[n-1]})-\Gamma^{U_1,\ldots,U_{i-1},V,U_i,\ldots,U_{n-1}}(f_j^{[n-1]})\right](K_1,\ldots,K_{n-1})\\
\label{perturbformulagen}&\hspace*{0.5cm}=\left[\Gamma^{U_1,\ldots,U_{i-1},U,V,U_i,\ldots,U_{n-1}}(f_j^{[n]})\right](K_1,\ldots,K_{i-1},U-V,K_i,\ldots,K_{n-1}).
\end{align}
The sequence $(f_j^{(n-1)})_j$ is uniformly convergent to $f^{(n-1)}$ on $\mathbb{T}$. It follows from Theorem \ref{MOISpBound} that
$$
\Gamma^{U_1,\ldots,U_{i-1},U,U_i,\ldots,U_{n-1}}(f_j^{[n-1]}) \underset{j\to +\infty}{\longrightarrow} \Gamma^{U_1,\ldots,U_{i-1},U,U_i,\ldots,U_{n-1}}(f^{[n-1]})
$$
and
$$
\Gamma^{U_1,\ldots,U_{i-1},V,U_i,\ldots,U_{n-1}}(f_j^{[n-1]}) \underset{j\to +\infty}{\longrightarrow} \Gamma^{U_1,\ldots,U_{i-1},V,U_i,\ldots,U_{n-1}}(f^{[n-1]})
$$
in $\mathcal{B}_{n}(\Sp^{p}(\hilh))$. In particular, they converge pointwise, that is
\begin{align*}
&\left[\Gamma^{U_1,\ldots,U_{i-1},U,U_i,\ldots,U_{n-1}}(f_j^{[n-1]})-\Gamma^{U_1,\ldots,U_{i-1},V,U_i,\ldots,U_{n-1}}(f_j^{[n-1]})\right](K_1,\ldots,K_{n-1})\\
&\hspace*{0.5cm} \underset{j\to +\infty}{\longrightarrow} \left[\Gamma^{U_1,\ldots,U_{i-1},U,U_i,\ldots,U_{n-1}}(f^{[n-1]})-\Gamma^{U_1,\ldots,U_{i-1},V,U_i,\ldots,U_{n-1}}(f^{[n-1]})\right](K_1,\ldots,K_{n-1})
\end{align*}
in $\mathcal{S}^p(\hilh)$. Next, by Lemma \ref{approxdivdiff2} and Lebesgue's dominated convergence theorem, the sequence $(f_j^{[n]})_j$ $w^*$-converges to $f^{[n]}$ for the $w^*$-topology of $L^{\infty}(\lambda_{U_1} \times \cdots \lambda_{U_{i-1}} \times \lambda_{U} \times \lambda_{V} \times \lambda_{U_{U_i}} \times \lambda_{U_{n+1}})$. It follows from Lemma \ref{LemmeUB} that 
$$
\left[\Gamma^{U_1,\ldots,U_{i-1},U,V,U_i,\ldots,U_{n-1}}(f_j^{[n]})\right](K_1,\ldots,K_{i-1},U-V,K_i,\ldots,K_{n-1})
$$
converges weakly (in $\mathcal{S}^p(\hilh)$) to
$$
\left[\Gamma^{U_1,\ldots,U_{i-1},U,V,U_i,\ldots,U_{n-1}}(f^{[n]})\right](K_1,\ldots,K_{i-1},U-V,K_i,\ldots,K_{n-1}).
$$
Hence, taking the limit as $j\to +\infty$ in \eqref{perturbformulagen} in the weak topology of $\mathcal{S}^p(\hilh)$ yields the desired identity. 
\end{proof}

\begin{crl}\label{PerturbationFormula2}
Let $1<p<\infty$ and let $n\ge 2$ be an integer. Let $U,V \in \mathcal{U}(\hilh)$ be such that $U-V\in\Sp^p(\hilh)$. Let $f : \mathbb{T} \to \mathbb{C}$ be $n$-times differentiable on $\mathbb{T}$ such that $f^{(n)}$ is bounded. Then, for all $K_1,\ldots,K_{n-1}\in\Sp^p(\hilh)$,
\begin{align*}
&\left[\Gamma^{(U)^n}(f^{[n-1]})-\Gamma^{(V)^n}(f^{[n-1]})\right](K_1,\ldots,K_{n-1})\\
&\hspace*{0.5cm}= \sum_{i=1}^n \left[\Gamma^{(U)^{i},(V)^{n-i+1}}(f^{[n]})\right](K_1,\ldots,K_{i-1},U-V,K_i,\ldots,K_{n-1}).
\end{align*}
\end{crl}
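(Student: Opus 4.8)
Corollary~\ref{PerturbationFormula2} asks to replace the two unitaries $U$ and $V$ appearing in the $n$ copies of $U_1,\ldots,U_{n}$ by a \emph{single} telescoping sum in which, at each step, exactly one copy of $U$ is swapped for $V$. The plan is to apply Proposition~\ref{PerturbationFormula} repeatedly, one variable at a time, and to collect the resulting $n$ perturbation terms.

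\medskip

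\textbf{Plan of proof.} The idea is a standard telescoping argument. I would interpolate between $\Gamma^{(U)^n}(f^{[n-1]})$ and $\Gamma^{(V)^n}(f^{[n-1]})$ through the chain of $n+1$ multiple operator integrals
$$
\Gamma^{(U)^n},\ \Gamma^{(U)^{n-1},V},\ \Gamma^{(U)^{n-2},(V)^2},\ \ldots,\ \Gamma^{(V)^n}
$$
(all with symbol $f^{[n-1]}$), where at the $i$th step I change the $i$th occurrence of $U$ into $V$. Writing the difference $\Gamma^{(U)^n}(f^{[n-1]})-\Gamma^{(V)^n}(f^{[n-1]})$ as the telescoping sum of the $n$ consecutive differences along this chain, each consecutive difference is precisely of the form treated by Proposition~\ref{PerturbationFormula}: at the $i$th step the operator list is $U_1=\cdots=U_{i-1}=U$, the perturbed slot changes from $U$ to $V$, and $U_i=\cdots=U_{n-1}=V$ (the remaining $n-i$ entries). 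Applying Proposition~\ref{PerturbationFormula} with this choice of $i$ and with symbol $f^{[n-1]}$ (so that $n$ there corresponds to our $n$, i.e.\ the divided difference jumps from $f^{[n-1]}$ to $f^{[n]}$), the $i$th consecutive difference becomes
$$
\left[\Gamma^{(U)^{i},(V)^{n-i+1}}(f^{[n]})\right](K_1,\ldots,K_{i-1},U-V,K_i,\ldots,K_{n-1}),
$$
which is exactly the $i$th summand in the claimed formula. Summing over $1\le i\le n$ gives the result.

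\medskip

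\textbf{Bookkeeping and the main subtlety.} The only real point requiring care is to verify that the operator tuple produced by the $i$th application of Proposition~\ref{PerturbationFormula} matches the indexing $(U)^{i},(V)^{n-i+1}$ in the statement. Concretely, Proposition~\ref{PerturbationFormula} is stated for a background list $U_1,\ldots,U_{n-1}$ and inserts the pair $U,V$ into slot $i$, producing the operator sequence $U_1,\ldots,U_{i-1},U,V,U_i,\ldots,U_{n-1}$ of length $n+1$. In the $i$th step of the telescope I set $U_1=\cdots=U_{i-1}=U$ and $U_i=\cdots=U_{n-1}=V$, which gives the sequence $\underbrace{U,\ldots,U}_{i-1},U,V,\underbrace{V,\ldots,V}_{n-i}$, namely $i$ copies of $U$ followed by $n-i+1$ copies of $V$; this is exactly $(U)^{i},(V)^{n-i+1}$. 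I should check that the hypotheses of Proposition~\ref{PerturbationFormula} hold at each step: the difference of the two perturbed operators is $U-V\in\Sp^p(\hilh)$ by assumption, all operators are unitary, and $f$ is $n$-times differentiable with bounded $f^{(n)}$, so every multiple operator integral above is well defined and bounded on $\Sp^p$ by Theorem~\ref{MOISpBound}. The main (and only mild) obstacle is thus purely notational: making sure the telescoping indices line up correctly so that the endpoints $\Gamma^{(U)^n}$ and $\Gamma^{(V)^n}$ are recovered and each intermediate term cancels, which the above slot-count confirms.
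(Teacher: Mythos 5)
Your proposal is correct and is exactly the paper's proof: the paper also writes the difference as the telescoping sum $\sum_{i=1}^n \left[\Gamma^{(U)^i,(V)^{n-i}}(f^{[n-1]})-\Gamma^{(U)^{i-1},(V)^{n-i+1}}(f^{[n-1]})\right]$ and applies Proposition~\ref{PerturbationFormula} to each consecutive difference. Your slot-count in the bookkeeping paragraph matches the paper's indexing precisely.
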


\begin{proof}
It suffices to write
\begin{align*}
&\left[\Gamma^{(U)^n}(f^{[n-1]})-\Gamma^{(V)^n}(f^{[n-1]})\right](K_1,\ldots,K_{n-1})\\
&\hspace*{0.5cm}= \sum_{i=1}^n \left[\Gamma^{(U)^i, (V)^{n-i}}(f^{[n-1]})-\Gamma^{(U)^{i-1}, (V)^{n-i+1}}(f^{[n-1]})\right](K_1,\ldots,K_{n-1})
\end{align*}
and then apply Proposition \ref{PerturbationFormula}.
\end{proof}

\begin{crl}\label{PerturbationFormula1}
Let $1<p<\infty$ and let $n\ge 2$ be an integer. Let $U_1,\ldots,U_n, V_1, \ldots, V_n \in \mathcal{U}(\hilh)$ be such that $U_i-V_i \in \mathcal{S}^p(\hilh)$ for every $1\leq i \leq n$. Let $f : \mathbb{T} \to \mathbb{C}$ be $n$-times differentiable on $\mathbb{T}$ such that $f^{(n)}$ is bounded. Then, there exists $D_{p,n}>0$ such that
\begin{align*}
\left\| \left[\Gamma^{U_1,\ldots,U_{n}}(f^{[n-1]})-\Gamma^{V_1,\ldots,V_{n}}(f^{[n-1]})\right] \right\|_{\mathcal{B}_{n-1}(\mathcal{S}^p(\hilh))} \leq D_{p,n} \|f^{(n)} \|_{L^{\infty}(\mathbb{T})} \max_{1\leq k \leq n} \|U_k-V_k\|_p.
\end{align*}
\end{crl}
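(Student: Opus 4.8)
The plan is to reduce the difference of the two multiple operator integrals to a telescoping sum in which consecutive terms differ in a single operator entry, and then to apply the perturbation formula of Proposition \ref{PerturbationFormula} termwise. First I would introduce, for $0 \le i \le n$, the $n$-tuples $W_i := (V_1, \ldots, V_i, U_{i+1}, \ldots, U_n)$, so that $W_0 = (U_1, \ldots, U_n)$ and $W_n = (V_1, \ldots, V_n)$, and write
\begin{equation*}
\Gamma^{U_1,\ldots,U_n}(f^{[n-1]}) - \Gamma^{V_1,\ldots,V_n}(f^{[n-1]}) = \sum_{i=1}^n \left( \Gamma^{W_{i-1}}(f^{[n-1]}) - \Gamma^{W_i}(f^{[n-1]}) \right).
\end{equation*}
For each $i$, the tuples $W_{i-1}$ and $W_i$ coincide in every entry except the $i$th, where $W_{i-1}$ carries $U_i$ and $W_i$ carries $V_i$; the common background is the list $(V_1,\ldots,V_{i-1},U_{i+1},\ldots,U_n)$ of $n-1$ unitaries.

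Next I would apply Proposition \ref{PerturbationFormula}, taking its background $n-1$ operators to be $(V_1,\ldots,V_{i-1},U_{i+1},\ldots,U_n)$ and the perturbed slot $i$ to go from $U = U_i$ to $V = V_i$. This rewrites the $i$th summand, evaluated at any $K_1,\ldots,K_{n-1} \in \mathcal{S}^p(\hilh)$, as the single multiple operator integral
\begin{equation*}
\left[\Gamma^{V_1,\ldots,V_{i-1},U_i,V_i,U_{i+1},\ldots,U_n}(f^{[n]})\right](K_1,\ldots,K_{i-1}, U_i - V_i, K_i, \ldots, K_{n-1}),
\end{equation*}
an $n$-linear form (in $n+1$ unitaries) with symbol $f^{[n]}$ evaluated at $n$ arguments, the extra one being the fixed operator $U_i - V_i \in \mathcal{S}^p(\hilh)$.

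To estimate this in the $\mathcal{S}^p$-norm, I would invoke the bound \eqref{MOIBound2} of Theorem \ref{MOISpBound}, which states that the $n$-linear integral $\Gamma(f^{[n]})$ lies in $\mathcal{B}_n(\mathcal{S}^p(\hilh))$ with norm at most $C_{p,n}\|f^{(n)}\|_{L^{\infty}(\mathbb{T})}$, i.e. it is bounded when \emph{all} $n$ inputs are measured in the same $\mathcal{S}^p$-norm. Applying it to the $n$ arguments $K_1,\ldots,K_{i-1}, U_i-V_i, K_i,\ldots,K_{n-1}$ gives
\begin{equation*}
\left\| \left[\Gamma^{W_{i-1}}(f^{[n-1]}) - \Gamma^{W_i}(f^{[n-1]})\right](K_1,\ldots,K_{n-1})\right\|_p \le C_{p,n}\|f^{(n)}\|_{L^{\infty}(\mathbb{T})}\, \|U_i - V_i\|_p \prod_{j=1}^{n-1}\|K_j\|_p.
\end{equation*}
Hence the $i$th summand has $\mathcal{B}_{n-1}(\mathcal{S}^p(\hilh))$-norm at most $C_{p,n}\|f^{(n)}\|_{L^{\infty}(\mathbb{T})}\|U_i - V_i\|_p$; summing over $i$ and bounding $\|U_i - V_i\|_p \le \max_k \|U_k - V_k\|_p$ yields the statement with $D_{p,n} = n\,C_{p,n}$.

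Given that both Proposition \ref{PerturbationFormula} and Theorem \ref{MOISpBound} are available, the argument is essentially a matter of correct bookkeeping. The one point requiring genuine care is matching the telescoping with the precise slot structure of Proposition \ref{PerturbationFormula}, so that each consecutive difference $\Gamma^{W_{i-1}} - \Gamma^{W_i}$ is a one-slot perturbation of the exact form the proposition requires. The Schatten-index arithmetic that might otherwise be an obstacle—namely that $\Gamma(f^{[n]})$ only lands a priori in $\mathcal{S}^{p/n}(\hilh)$—is already absorbed into the ``in particular'' clause \eqref{MOIBound2}, which is exactly what permits the inserted factor $U_i - V_i$ to be controlled uniformly in the $\mathcal{S}^p$-norm.
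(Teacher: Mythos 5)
Your proof is correct and follows essentially the same route as the paper: a telescoping decomposition into one-slot differences (yours replaces the $U_i$ from the front, the paper's from the back, which is immaterial), then Proposition \ref{PerturbationFormula} termwise, then the bound \eqref{MOIBound2} of Theorem \ref{MOISpBound} applied to the $n$ arguments including the inserted factor $U_i-V_i$, yielding $D_{p,n}=nC_{p,n}$. Your closing remark about the Schatten-index arithmetic being absorbed into the ``in particular'' clause is exactly the right observation.
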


\begin{proof} Let $K_1, \ldots, K_{n-1} \in \mathcal{S}^p(\hilh)$. By Proposition \ref{PerturbationFormula}, we have
\begin{align*}
&\left[\Gamma^{U_1,\ldots,U_{n}}(f^{[n-1]})-\Gamma^{V_1,\ldots,V_{n}}(f^{[n-1]})\right](K_1, \ldots, K_{n-1})\\
&\hspace*{0.5cm} = \sum_{k=1}^{n} \left[\Gamma^{U_1,\ldots,U_k, V_{k+1}, \ldots, V_n}(f^{[n-1]})-\Gamma^{U_1,\ldots,U_{k-1}, V_{k}, \ldots, V_n}(f^{[n-1]}) \right](K_1, \ldots, K_{n-1}) \\
& \hspace*{0.5cm} = \sum_{k=1}^{n} \left[\Gamma^{U_1,\ldots,U_{n-k},V_{n-k}, \ldots, V_n}(f^{[n]})\right](K_1,\ldots,K_{k-1},U_{k}-V_{k},K_{k},\ldots,K_{n-1}).
\end{align*}
By Theorem \ref{MOISpBound}, there exists a constant $C_{p,n}$ such that
\begin{align*}
& \left\| \left[\Gamma^{U_1,\ldots,U_{n}}(f^{[n-1]})-\Gamma^{V_1,\ldots,V_{n}}(f^{[n-1]})\right](K_1,\ldots,K_{n-1}) \right\|_p\\
& \hspace*{0,5cm} \leq n \max_{1\leq k \leq n}  \left\| \left[\Gamma^{U_1,\ldots,U_{n-k},V_{n-k}, \ldots, V_n}(f^{[n]})\right](K_1,\ldots,K_{k-1},U_{k}-V_{k},K_{k},\ldots,K_{n-1}) \right\|_p \\
& \hspace{0,5cm} \leq n C_{p,n} \|f^{(n)} \|_{L^{\infty}(\mathbb{T})}  \max_{1\leq k \leq n} \|U_k-V_k\|_p \|K_1\|_p \cdots \| K_{n-1} \|_p.
\end{align*}
This concludes the proof of the Corollary.
\end{proof}

\begin{rmrk}
Proposition \ref{PerturbationFormula} and Corollary \ref{PerturbationFormula1} also hold true in the case $n=1$. For the perturbation formula, it means that if $U,V \in \mathcal{U}(\hilh)$ are such that $U-V\in \mathcal{S}^p(\hilh)$ and $f : \mathbb{T} \to \mathbb{C}$ is differentiable with bounded $f'$, then
$$
f(U)-f(V) = \left[\Gamma^{U,V}(f^{[1]})\right](U-V).
$$
We refer e.g. to \cite{BiSo3}. Alternatively, for a more recent reference, we can first use \cite[Proposition 2.5]{CCGP} in the case $f\in C^1(\mathbb{T})$ (the proof works verbatim for $n=1$) and then some minor modifications to the proof of Proposition \ref{PerturbationFormula} will give the desired formula. The bound given in Corollary \ref{PerturbationFormula1} in the case $n=1$ simply corresponds to \cite[Theorem 2]{AyCoSu16}.
\end{rmrk}

\section{From selfadjoint to unitary operators}\label{sectionselfadj}

In this section, we will prove a general result on the differentiability of operator functions in the selfadjoint case and then deduce its unitary counterpart using a Cayley transform. This will be the first step towards our main theorem in Section \ref{sectionmainresult}.

Let $\mathcal{A} : \mathbb{R} \to \mathcal{B}_{sa}(\mathcal{H})$ be such that $t\in \mathbb{R} \mapsto \mathcal{A}(t) - \mathcal{A}(0) \in \mathcal{S}^p_{sa}(\mathcal{H})$ is $n$-times differentiable, and let $f$ be an $n$-times differentiable function on $\mathbb{R}$. We will show that the function
$$
\varphi : t\in \mathbb{R} \mapsto f(\mathcal{A}(t)) - f(\mathcal{A}(0)) \in \mathcal{S}^p(\hilh)
$$
is $n$-times differentiable as well. The particular case $\mathcal{A}(t) = A+tK$, where $A\in \mathcal{B}_{sa}(\hilh)$ and $K\in \mathcal{S}^p_{sa}(\hilh)$ is the main result of \cite{Co22}. We will outline the minor changes to make in the proof of \cite[Theorem 3.1]{Co22} as well as in the results therein to obtain our general result in Corollary \ref{FormulaSA3}.

Let us start with the following which is the key step prior to a combinatorial reasoning.

\begin{thm}\label{FormulaSA2}
Let $1 < p < \infty$, let $\mathcal{A} : \mathbb{R} \to \mathcal{B}_{sa}(\mathcal{H})$ be such that $\tilde{\mathcal{A}} : t\in \mathbb{R} \mapsto \mathcal{A}(t) - \mathcal{A}(0) \in \mathcal{S}^p_{sa}(\mathcal{H})$ is differentiable at $0$. Let $n \in\N, n\geq 2$. Let, for every $1\leq i \leq n-1$, $S_i : \mathbb{R} \to \mathcal{S}^p_{sa}(\hilh)$ be differentiable on $\mathbb{R}$. Let $f$ be $n$-times differentiable on $\mathbb{R}$ such that $f^{(n)}$ is bounded and consider the function
$$\varphi : t\in \mathbb{R} \mapsto \left[ \Gamma^{(\mathcal{A}(t))^n}(f^{[n-1]})\right](S_1(t), \ldots, S_{n-1}(t)) \in \mathcal{S}^p(\mathcal{H}).$$
Then $\varphi$ is differentiable on $\mathbb{R}$ and for every $t\in \mathbb{R}$,
\begin{align*}
\varphi'(t) = 
& \sum_{k=1}^{n-1} \left[\Gamma^{(\mathcal{A}(t))^{n}}(f^{[n-1]})\right](S_1(t),\ldots, S_{k-1}(t), S_k'(t), S_{k+1}(t), \ldots, S_{n-1}(t)) \\
& + \sum_{k=1}^{n} \left[\Gamma^{(\mathcal{A}(t))^{n+1}}(f^{[n]})\right](S_1(t),\ldots, S_{k-1}(t), \tilde{\mathcal{A}}'(t), S_{k}(t), \ldots, S_{n-1}(t)).
\end{align*}
\end{thm}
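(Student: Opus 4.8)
The plan is to differentiate $\varphi$ at a fixed point $t\in\R$ by forming the difference quotient $\frac{\varphi(t+\epsilon)-\varphi(t)}{\epsilon}$ and letting $\epsilon\to 0$ in $\mathcal{S}^p(\hilh)$. The natural strategy is a telescoping decomposition that isolates the variation in each of the three types of data on which $\varphi$ depends: the operators $\mathcal{A}(t)$ appearing in the multiple operator integral $\Gamma^{(\mathcal{A}(t))^n}(f^{[n-1]})$, and the arguments $S_1(t),\ldots,S_{n-1}(t)$. First I would write $\varphi(t+\epsilon)-\varphi(t)$ as a sum where I change one ingredient at a time, keeping the others fixed; schematically,
\begin{align*}
\varphi(t+\epsilon)-\varphi(t)
= &\ \left[\Gamma^{(\mathcal{A}(t+\epsilon))^n}(f^{[n-1]})-\Gamma^{(\mathcal{A}(t))^n}(f^{[n-1]})\right](S_1(t+\epsilon),\ldots,S_{n-1}(t+\epsilon)) \\
&+ \sum_{k=1}^{n-1} \left[\Gamma^{(\mathcal{A}(t))^n}(f^{[n-1]})\right]\bigl(S_1(t),\ldots,S_{k-1}(t),S_k(t+\epsilon)-S_k(t),S_{k+1}(t+\epsilon),\ldots\bigr).
\end{align*}

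\textbf{For the second (easy) family of terms}, the $S_k$-variations, I would divide by $\epsilon$ and use the differentiability of each $S_k$ at $t$ together with the $\mathcal{S}^p$-boundedness of $\Gamma^{(\mathcal{A}(t))^n}(f^{[n-1]})$ from Theorem \ref{MOISpBound}; since the multiple operator integral is a fixed bounded multilinear map, $\frac{S_k(t+\epsilon)-S_k(t)}{\epsilon}\to S_k'(t)$ in $\mathcal{S}^p$, and the other arguments converge to $S_j(t)$ by continuity, so these terms produce exactly the first sum in the claimed formula for $\varphi'(t)$. \textbf{The harder family} is the first term, where the symbol-carrying operators change from $\mathcal{A}(t)$ to $\mathcal{A}(t+\epsilon)$. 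Here the key tool is the perturbation formula of Corollary \ref{PerturbationFormula2}, applied with $U=\mathcal{A}(t+\epsilon)$ and $V=\mathcal{A}(t)$ (in the selfadjoint analogue valid by the discussion opening Section \ref{sectionselfadj}), which rewrites $\Gamma^{(\mathcal{A}(t+\epsilon))^n}(f^{[n-1]})-\Gamma^{(\mathcal{A}(t))^n}(f^{[n-1]})$ as a sum over $i$ of $(n{+}1)$-th order multiple operator integrals $\Gamma^{(\mathcal{A}(t+\epsilon))^i,(\mathcal{A}(t))^{n-i+1}}(f^{[n]})$ evaluated with the extra slot filled by $\mathcal{A}(t+\epsilon)-\mathcal{A}(t)=\tilde{\mathcal{A}}(t+\epsilon)-\tilde{\mathcal{A}}(t)$.

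After dividing by $\epsilon$, the crucial quantity becomes $\frac{\tilde{\mathcal{A}}(t+\epsilon)-\tilde{\mathcal{A}}(t)}{\epsilon}\to \tilde{\mathcal{A}}'(t)$ in $\mathcal{S}^p$, while the remaining arguments $S_j(t+\epsilon)$ converge to $S_j(t)$. \textbf{The main obstacle I expect} is controlling the limit of the symbol-operators themselves: after the perturbation formula, the relevant multiple operator integrals are $\Gamma^{(\mathcal{A}(t+\epsilon))^i,(\mathcal{A}(t))^{n-i+1}}(f^{[n]})$, whose defining operators $\mathcal{A}(t+\epsilon)$ depend on $\epsilon$, and one must show these converge (in the appropriate sense on $\mathcal{B}_n(\mathcal{S}^p)$) to $\Gamma^{(\mathcal{A}(t))^{n+1}}(f^{[n]})$ as $\epsilon\to 0$. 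Because $f^{(n)}$ is only bounded (not continuous), one cannot approximate $f$ uniformly; instead I would argue via the $w^*$-continuity machinery of Lemma \ref{LemmeUB}, combined with the uniform $\mathcal{S}^p$-bound from Theorem \ref{MOISpBound} and the stability of the scalar-valued spectral measures under the $\mathcal{S}^p$-perturbation $\tilde{\mathcal{A}}(t+\epsilon)-\tilde{\mathcal{A}}(t)\to 0$. This is exactly the delicate continuity-of-MOI-in-its-operators statement that underlies \cite[Theorem 3.1]{Co22}; I would invoke the selfadjoint counterparts of the results in Section \ref{Sec3}, noting that all the lemmas (the diagonal bound, the boundedness theorem, the perturbation formula) carry over verbatim to selfadjoint $\mathcal{A}(t)$ since they hold for normal operators, and that the limit $\epsilon\to 0$ can be taken inside the difference quotient because the error terms are uniformly $\mathcal{S}^p$-bounded and converge weakly to zero. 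Summing the contributions of both families then yields precisely the stated expression for $\varphi'(t)$, with the second sum arising from the perturbation-formula terms indexed so that the new operator $\tilde{\mathcal{A}}'(t)$ sits in position $k$.
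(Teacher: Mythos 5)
Your overall architecture — telescoping the difference quotient, handling the $S_k$-variations by multilinearity and $\mathcal{S}^p$-boundedness of the fixed multiple operator integral, and converting the variation of the symbol-carrying operators into $(n{+}1)$-st order integrals via the perturbation formula — matches the skeleton of the paper's argument, and your treatment of the ``easy'' family is correct (the paper does the same thing via the expansion $S_i(t)=S_i(0)+tS_i'(0)+o_i(t)$ and uniform boundedness of the integrals). The genuine gap is in the step you yourself flag as the main obstacle. After the perturbation formula and division by $\epsilon$, one must show that
$$
\left[\Gamma^{(\mathcal{A}(t+\epsilon))^{i},(\mathcal{A}(t))^{n-i+1}}(f^{[n]})\right]\Bigl(X_1,\ldots,\tfrac{\tilde{\mathcal{A}}(t+\epsilon)-\tilde{\mathcal{A}}(t)}{\epsilon},\ldots,X_{n-1}\Bigr)
$$
converges \emph{in the $\mathcal{S}^p$-norm} to $\left[\Gamma^{(\mathcal{A}(t))^{n+1}}(f^{[n]})\right](X_1,\ldots,\tilde{\mathcal{A}}'(t),\ldots,X_{n-1})$. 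None of the tools you invoke delivers this. Lemma \ref{LemmeUB} is a statement about a varying \emph{symbol} with \emph{fixed} operators $U_1,\ldots,U_{n+1}$, and it only yields weak convergence; here it is the defining operators that move, and weak convergence of the difference quotient does not give differentiability in the $\mathcal{S}^p$-norm, which is how differentiability is defined in this paper. The map $B\mapsto \Gamma^{(B)^{n+1}}(f^{[n]})$ is precisely \emph{not} known to be continuous in any sense strong enough for your argument when $f^{(n)}$ is merely bounded — if it were, the whole theorem would be routine, and this is exactly the point of departure from the $C^n$ case treated in \cite{CCGP}. Your closing claim that the limit can be taken ``because the error terms are uniformly $\mathcal{S}^p$-bounded and converge weakly to zero'' is therefore both unjustified and, even if established, insufficient.

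The paper closes this gap by a different mechanism, imported from \cite{KiPoShSu14, Co22}, which is not a continuity statement. One first reduces (by a density and uniform-boundedness argument as in \cite[Lemma 3.7]{Co22}) to perturbations $K$ in the dense subset $\mathcal{F}=\{\, i[A,Y]+Z \mid Y,Z\in\mathcal{S}^p_{sa}(\hilh),\ Z \text{ commutes with } A \,\}$, and then further reduces the arguments $X_1,\ldots,X_{n-1}$ to $\mathcal{F}$ as well. For such special perturbations the commutator part is absorbed by a unitary conjugation, and the computations of \cite[Theorem 3.1]{Co22} show that the existence of the derivative is equivalent to the existence of the limit at $0$ of $t\mapsto \sum_k\Gamma^{(A+tZ)^{n-k+1},(A)^{k}}(f^{[n]})$ applied to fixed arguments, which is then established by explicit calculation. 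Your proposal would need to be supplemented by this reduction (or an equivalent substitute); as written, the passage to the limit in the hard family is not justified.
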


\begin{proof} Let us explain the simple modifications to make in \cite{Co22} to obtain the result. Throughout the proof, we denote $A:=\mathcal{A}(0) \in \mathcal{B}_{sa}(\mathcal{H})$. Define, for $K, X_1,\ldots, X_{n-1} \in \mathcal{S}^p_{sa}$,
$$
\varphi_{K,X_1, \ldots, X_{n-1}} : t\in \mathbb{R} \mapsto \left[ \Gamma^{(A+tK)^n}(f^{[n-1]}) \right] (X_1, \ldots, X_{n-1}).
$$
We first want to prove that $\varphi_{K,X_1, \ldots, X_{n-1}}$ is differentiable at $0$. Assume that for every $K_0$ in a dense subset of $\mathcal{S}^p_{sa}$, $\varphi_{K_0,X_1, \ldots, X_{n-1}} $ is differentiable at $0$ with
$$
\varphi_{K_0,X_1, \ldots, X_{n-1}}'(0) = \sum_{k=1}^n \left[ \Gamma^{(A)^{n+1}}(f^{[n]}) \right] (X_1, \ldots, X_{k-1}, K_0, X_k, \ldots, X_{n-1}).
$$
Then, arguing as in the proof of \cite[Lemma 3.7]{Co22}, one shows that for every  $K\in \mathcal{S}^p_{sa}$, $\varphi_{K,X_1, \ldots, X_{n-1}}$ is differentiable at $t=0$ with the same formula for its derivative. Next, as explained in \cite{KiPoShSu14} or in the proof of \cite[Theorem 3.1]{Co22}, one can choose
$$
\mathcal{F} := \left\lbrace i[A,Y] + Z \mid Y,Z \in \mathcal{S}^p_{sa}(\mathcal{H})) \ \text{and} \ Z \ \text{commutes with} \ A \right\rbrace
$$
as a dense subset of $\mathcal{S}^p_{sa}(\mathcal{H})$. By the latter, we can assume that $K=i[A,Y] + Z\in \mathcal{F}$ and we have to show that $\varphi_{K,X_1, \ldots, X_{n-1}}$ is differentiable at $t=0$. The first part of the proof of \cite[Theorem 3.1]{Co22} applies with the same computations and it tells us that $\varphi_{K,X_1, \ldots, X_{n-1}}$ has a derivative in $0$ if and only if
$$
\xi : t\in \mathbb{R} \mapsto \displaystyle \sum_{k=1}^n \left[ \Gamma^{(A+tZ)^{n-k+1}, (A)^k}(f^{[n]})\right] (X_1, \ldots, X_{n-k}, K, X_{n-k+1}, \ldots, X_{n-1})
$$
has a limit in $0$ (in $\mathcal{S}^p$), and in that case, this limit is $\varphi_{K,X_1, \ldots, X_{n-1}}'(0)$. But notice that by continuity of multiple operator integrals (the operators $\Gamma^{(A+tZ)^{n-k+1}, (A)^k}(f^{[n]})$ are uniformly bounded with respect to $t\in \mathbb{R}$), it is enough to show that this limit exists when $X_1, \ldots, X_{n-1}$ are elements of the dense subset $\mathcal{F}$. Hence, one can write $X_i = i[A,Y_i] + Z_i$. The rest of the proof is similar, with obvious modifications, and shows that $\xi$ has indeed a limit in $0$ equal to
\begin{equation}\label{deriveeautoadj}
\varphi_{K,X_1, \ldots, X_{n-1}}'(0) = \sum_{k=1}^n \left[ \Gamma^{(A)^{n+1}}(f^{[n]})\right] (X_1, \ldots, X_{n-k}, K, X_{n-k+1}, \ldots, X_{n-1}),
\end{equation}
as expected.

Now, let us come back to the function $\varphi$. It is sufficient to prove the formula for $t=0$. Let $K:=\mathcal{A}'(0)$. By a straightforward modification of \cite[Lemma 3.8]{Co22}, $\varphi$ is differentiable at $0$ if and only if
$$\tilde{\varphi} : t\in \mathbb{R} \mapsto \left[ \Gamma^{(A+tK)^n}(f^{[n-1]})\right](S_1(t), \ldots, S_{n-1}(t)) \in \mathcal{S}^p(\mathcal{H})$$
is differentiable at $0$ and in that case $\varphi'(0)=\tilde{\varphi}'(0)$. Let us write, for every $1\leq i \leq n-1$,
$$
S_i(t) = S_i(0) + tS_i'(0) + o_i(t)
$$
where $o_i(t) = o(t)$ depends on $i$. By uniform boundedness of $\Gamma^{(A+tK)^n}(f^{[n-1]}), ~ t\in \mathbb{R}$, and by the multilinearity of operator integrals, we can write
\begin{align*}
\tilde{\varphi}(t)
& = \left[ \Gamma^{(A+tK)^n}(f^{[n-1]}) \right](S_1(0), \ldots, S_{n-1}(0)) \\
& \hspace*{0,5cm} + t \sum_{k=1}^{n-1} \left[ \Gamma^{(A+tK)^n}(f^{[n-1]}) \right](S_1(0),\ldots, S_{k-1}(0), S_k'(0),  S_{k+1}(0), \ldots, S_{n-1}(0)) + \mathcal{O}(t^2).
\end{align*}
By the first part of the proof, $\tilde{\varphi}$ is differentiable at $0$ and by \eqref{deriveeautoadj}, we get the desired formula.
\end{proof}

\begin{crl}\label{FormulaSA3} Let $1 < p < \infty$ and let $n \in\N, n\geq 1$. Let $\mathcal{A} : \mathbb{R} \to \mathcal{B}_{sa}(\mathcal{H})$ be such that $\tilde{\mathcal{A}} : t\in \mathbb{R} \mapsto \mathcal{A}(t) - \mathcal{A}(0) \in \mathcal{S}^p_{sa}(\mathcal{H})$ is $n$-times differentiable in a neighborhood $I$ of $0$. Let $f$ be $n$-times differentiable on $\mathbb{R}$ such that $f^{(n)}$ is bounded. Then, the function
$$
\varphi : t\in \mathbb{R} \mapsto f(\mathcal{A}(t)) - f(\mathcal{A}(0)) \in \mathcal{S}^p(\hilh)
$$
is $n$-times differentiable on $I$ and for every integer $1\leq k  \leq n$ and every $t \in I$,
\begin{equation}\label{Mainformuladerivative2}
\varphi^{(k)}(t)=\sum_{m=1}^{k} \sum_{\substack{l_1,\ldots,l_m\ge 1 \\ l_1+\cdots+l_m=k}}\dfrac{k!}{l_1!\cdots l_m!}\left[\Gamma^{(\mathcal{A}(t))^{m+1}}(f^{[m]})\right]\left(\tilde{\mathcal{A}}^{(l_1)}(t),\ldots,\tilde{\mathcal{A}}^{(l_m)}(t)\right).
\end{equation}
\end{crl}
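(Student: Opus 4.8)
The plan is to fix the neighborhood $I$ and prove the formula \eqref{Mainformuladerivative2} by induction on the order $k$, $1\le k\le n$. The key observation is that Theorem \ref{FormulaSA2} already performs one $\mathcal{S}^p$-differentiation of any expression of the shape appearing on the right-hand side of \eqref{Mainformuladerivative2}, so that the remaining work is a Fa\`a di Bruno--type reorganization of coefficients. Write $\Phi_k(t)$ for the right-hand side of \eqref{Mainformuladerivative2}; I want to show that $\varphi$ is $k$-times differentiable on $I$ with $\varphi^{(k)}=\Phi_k$. The base case $k=1$ reads $\varphi'(t)=\left[\Gamma^{(\mathcal{A}(t))^2}(f^{[1]})\right](\tilde{\mathcal{A}}'(t))$; this first-order $\mathcal{S}^p$-differentiability, under the sole assumption that $f'$ is bounded (not necessarily continuous), is exactly what the adaptation of the main result of \cite{Co22} to a general path $\mathcal{A}$ provides, and it is where the genuine analytic difficulty lies, resting ultimately on the approximation argument of \cite{KiPoShSu14, Co22}. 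I would record this first-order statement separately, noting that each multiple operator integral involved is well defined and bounded on $\mathcal{S}^p$ by Theorem \ref{MOISpBound}.

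For the inductive step, assume $\varphi^{(k)}=\Phi_k$ on $I$ with $k\le n-1$, and differentiate $\Phi_k$ termwise. A typical summand is $\frac{k!}{l_1!\cdots l_m!}\left[\Gamma^{(\mathcal{A}(t))^{m+1}}(f^{[m]})\right](\tilde{\mathcal{A}}^{(l_1)}(t),\ldots,\tilde{\mathcal{A}}^{(l_m)}(t))$ with $l_1+\cdots+l_m=k$, which is precisely of the form handled by Theorem \ref{FormulaSA2}, taking its order to be $m+1$ and its inner functions to be $S_j=\tilde{\mathcal{A}}^{(l_j)}$. The hypotheses are met because each $\tilde{\mathcal{A}}^{(l_j)}$ is differentiable on $I$: indeed $l_j\le k\le n-1$, so $\tilde{\mathcal{A}}^{(l_j+1)}$ exists by the $n$-fold differentiability of $\tilde{\mathcal{A}}$. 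Theorem \ref{FormulaSA2} then yields two families of terms: a first family (still with $m$ arguments, same symbol $f^{[m]}$) in which some $\tilde{\mathcal{A}}^{(l_j)}$ is replaced by $\tilde{\mathcal{A}}^{(l_j+1)}$, and a second family (with $m+1$ arguments and symbol $f^{[m+1]}$, hence $\Gamma^{(\mathcal{A}(t))^{m+2}}(f^{[m+1]})$) in which a factor $\tilde{\mathcal{A}}'(t)$ is inserted in each of the $m+1$ positions.

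It remains to verify that summing these contributions over all $m$ and all compositions of $k$ reproduces $\Phi_{k+1}$. Fix a target composition $\mu=(\mu_1,\ldots,\mu_m)$ of $k+1$. The first family supplies, for each index $j$ with $\mu_j\ge 2$, the source coefficient $\frac{k!}{\mu_1!\cdots(\mu_j-1)!\cdots\mu_m!}=\frac{k!\,\mu_j}{\mu_1!\cdots\mu_m!}$; the second family supplies, for each index $j$ with $\mu_j=1$, the coefficient $\frac{k!}{\mu_1!\cdots\mu_m!}=\frac{k!\,\mu_j}{\mu_1!\cdots\mu_m!}$, the point being that a part equal to $1$ can only be created by an insertion, never by an increment. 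Adding these over all $j$ gives $\frac{k!}{\mu_1!\cdots\mu_m!}\sum_{j=1}^m\mu_j=\frac{(k+1)!}{\mu_1!\cdots\mu_m!}$, which is exactly the coefficient of $\mu$ in $\Phi_{k+1}$. I would also check that the extreme values $m=1$ and $m=k+1$ are correctly produced (by the first family applied to the one-argument term of $\Phi_k$, and by the second family applied to its top term, respectively), closing the induction.

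The main obstacle is not analytic but combinatorial: within this Corollary all the hard analysis has been deferred to Theorem \ref{FormulaSA2} and to the first-order base case. The delicate point is the bookkeeping of the last paragraph, namely keeping track of the fact that parts equal to $1$ and parts $\ge 2$ in the target composition arise from genuinely different mechanisms (insertion of a new $\tilde{\mathcal{A}}'$ versus increment of an existing derivative order), and checking that the two coefficient contributions recombine, through the identity $\sum_j\mu_j=k+1$, into the correct multinomial coefficient.
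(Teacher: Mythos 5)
Your proposal is correct and follows essentially the same route as the paper, which proves the corollary by induction with the base case $n=1$ taken from \cite[Theorem 7.13]{KiPoShSu14} and the inductive step obtained by combining Theorem \ref{FormulaSA2} with the combinatorial reasoning of \cite[Theorem 5.3.4]{SkToBook}, the details being left to the reader. Your last two paragraphs supply precisely those omitted details (the increment-versus-insertion bookkeeping and the identity $\sum_j \mu_j = k+1$), and they check out.
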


\begin{proof}
We prove the result by induction on $n$. The case $n=1$ follows from \cite[Theorem 7.13]{KiPoShSu14}. When $n\geq 2$, using Theorem \ref{FormulaSA2} and employing a similar combinatorial reasoning as demonstrated in the proof of \cite[Theorem 5.3.4]{SkToBook} give the result. We leave the details to the reader.
\end{proof}

\begin{rmrk}\label{Corollary42simple}
In the case when $f \in C^n(\mathbb{R})$, the result of Corollary \ref{FormulaSA3} can be proved using \cite[Theorem 3.3]{LeSk20} instead of Theorem \ref{FormulaSA2}.
\end{rmrk}

The next proposition corresponds to the main result of this paper (see Theorem \ref{main}) but with an additional assumption on the function $t\mapsto U(t)$ valued in the unitary operators. We will need it to prove the same result in full generality in Section \ref{sectionmainresult}. The proof makes use of Corollary \ref{FormulaSA3} which is the corresponding result for selfadjoint operators. To do so, we will need the Cayley transform to change the function $t \mapsto f(U(t)) - f(U(0))$ into a function $t \mapsto g(\mathcal{A}(t)) - g(\mathcal{A}(0))$ where $\mathcal{A}$ is valued in the set of selfadjoint operators on $\mathcal{H}$. Denote by $\eta$ the Cayley transform $\eta : \mathbb{R} \to \mathbb{T}\setminus \{1\}$ and by $\eta^{-1}$ its inverse function, defined by
$$\begin{array}{lrcl}
\eta : & \mathbb{R} & \longrightarrow & \mathbb{T}\setminus \{1\} \\
    & x & \longmapsto & \frac{x + i}{x - i}
    \end{array}
    \quad \quad \text{and} \quad \quad
    \begin{array}{lrcl}
\eta^{-1} : &  \mathbb{T}\setminus \{1\} & \longrightarrow & \mathbb{R} \\
    & z & \longmapsto & i\frac{z+1}{z-1}
    \end{array}.
$$
If $A \in \mathcal{B}_{sa}(\hilh)$, then $\eta(A)\in \mathcal{U}(\hilh)$ and $\sigma(\eta(A)) \subset \mathbb{T}\setminus \{1\}$ and conversely, if $U\in \mathcal{U}(\hilh)$ is such that $1\notin \sigma(U)$, then $\eta^{-1}(U)\in \mathcal{B}_{sa}(\hilh)$.

\begin{ppsn}\label{simplecasemainthm}
Let $1<p<\infty$ and $n\in\N$. Let $U:\R\rightarrow\mathcal{U}(\hilh)$ be such that the function $\tilde{U}:t\in\R\mapsto U(t)-U(0)\in\Sp^p(\hilh)$ is $n$-times differentiable on $\mathbb{R}$ and assume that $1\notin \sigma (U(0))$. Let $f: \mathbb{T} \to \mathbb{R}$ be $n$-times differentiable with bounded $n$th derivative $f^{(n)}$. Consider the operator valued function
$$
\varphi : t\mapsto f(U(t)) - f(U(0)) \in \mathcal{S}^p(\hilh).
$$
Then $\varphi$ is $n$-times differentiable in a neighborhood $I$ of $0$ and for every $t \in I$,
\begin{equation}\label{Mainformuladerivative0}
\varphi^{(n)}(t)=\sum_{m=1}^{n} \sum_{\substack{l_1,\ldots,l_m\ge 1 \\ l_1+\cdots+l_m=n}}\dfrac{n!}{l_1!\cdots l_m!}\left[\Gamma^{(U(t))^{m+1}}(f^{[m]})\right]\left(\tilde{U}^{(l_1)}(t),\ldots,\tilde{U}^{(l_m)}(t)\right).
\end{equation}
\end{ppsn}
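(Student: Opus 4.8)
The plan is to reduce to the selfadjoint case through the Cayley transform and then invoke Corollary \ref{FormulaSA3}. Since $\tilde U$ is differentiable it is norm-continuous, so $t\mapsto U(t)=U(0)+\tilde U(t)$ is norm-continuous; as $1\notin\sigma(U(0))$ the operator $U(0)-I$ is invertible, and invertibility being an open condition, there is a neighborhood $I$ of $0$ on which $U(t)-I$ is invertible, i.e.\ $1\notin\sigma(U(t))$. On $I$ I would set $\mathcal{A}(t):=\eta^{-1}(U(t))\in\mathcal{B}_{sa}(\hilh)$ and $g:=f\circ\eta:\mathbb{R}\to\mathbb{R}$, so that $\eta(\mathcal{A}(t))=U(t)$ and therefore
$$\varphi(t)=f(U(t))-f(U(0))=g(\mathcal{A}(t))-g(\mathcal{A}(0)).$$

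Next I would verify the hypotheses of Corollary \ref{FormulaSA3}. First, $g$ is $n$-times differentiable on $\mathbb{R}$ and, by Fa\`a di Bruno together with the boundedness of $f,f',\dots,f^{(n)}$ on the compact set $\mathbb{T}$ and of every derivative of $\eta$ on $\mathbb{R}$, the derivative $g^{(n)}$ is bounded. Second, since $\eta^{-1}(w)=i\tfrac{w+1}{w-1}=i+2i(w-1)^{-1}$, at the operator level $\mathcal{A}(t)=iI+2i(U(t)-I)^{-1}$, whence the resolvent identity
$$\tilde{\mathcal{A}}(t)=\mathcal{A}(t)-\mathcal{A}(0)=-2i\,(U(t)-I)^{-1}\,\tilde U(t)\,(U(0)-I)^{-1}.$$
As $\tilde U$ is $n$-times $\mathcal{S}^p$-differentiable it is in particular $n$-times norm-differentiable, and since operator inversion is $C^\infty$ on the invertibles, $t\mapsto(U(t)-I)^{-1}$ is $n$-times norm-differentiable on $I$; the Leibniz rule for the bounded bilinear map $\mathcal{B}(\hilh)\times\mathcal{S}^p(\hilh)\to\mathcal{S}^p(\hilh)$ then shows that $\tilde{\mathcal{A}}$ is $n$-times $\mathcal{S}^p_{sa}$-differentiable on $I$. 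Corollary \ref{FormulaSA3} (a local statement) now applies and gives that $\varphi$ is $n$-times differentiable on $I$, with $\varphi^{(n)}(t)$ expressed by \eqref{Mainformuladerivative2} in terms of $\mathcal{A}(t)$ and $g$.

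The main obstacle is to rewrite this selfadjoint formula as the unitary formula \eqref{Mainformuladerivative0}. A direct computation would require the chain rule for divided differences of the composition $f\circ\eta$ together with a careful account of how the derivatives $\tilde{\mathcal{A}}^{(l)}$ are expressed through the $\tilde U^{(l)}$, which is combinatorially heavy. I would instead circumvent this by approximation. Let $(f_j)_j\subset C^n(\mathbb{T})$ be the sequence of Lemma \ref{approxdivdiff2} and put $g_j:=f_j\circ\eta$ and $\varphi_j(t):=f_j(U(t))-f_j(U(0))$. For the regular functions $f_j$ the identity \eqref{Mainformuladerivative0} is the higher-order perturbation formula established in \cite{CCGP,PoSkSuTo17} (whose proofs carry over to a general differentiable path), so it holds for each $\varphi_j^{(n)}(t)$.

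It then remains to pass to the limit. On the unitary side, Lemma \ref{approxdivdiff2} gives $f_j^{[m]}\to f^{[m]}$ (uniformly for $m\le n-1$, pointwise everywhere for $m=n$) with uniform $L^\infty$-bounds, so Theorem \ref{MOISpBound} and Lemma \ref{LemmeUB} show that each $\Gamma^{(U(t))^{m+1}}(f_j^{[m]})$ applied to the fixed elements $\tilde U^{(l_i)}(t)\in\mathcal{S}^p(\hilh)$ converges weakly in $\mathcal{S}^p(\hilh)$ to the corresponding term of \eqref{Mainformuladerivative0}. On the selfadjoint side, applying Corollary \ref{FormulaSA3} to $f$ and to each $f_j$ expresses $\varphi^{(n)}(t)$ and $\varphi_j^{(n)}(t)$ through $g^{[m]}$ and $g_j^{[m]}$; since $f_j^{(k)}\to f^{(k)}$ with uniform bounds, Fa\`a di Bruno yields the analogous convergence $g_j^{[m]}\to g^{[m]}$, and the selfadjoint analogues of Theorem \ref{MOISpBound} and Lemma \ref{LemmeUB} give $\varphi_j^{(n)}(t)\to\varphi^{(n)}(t)$ weakly in $\mathcal{S}^p(\hilh)$. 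Matching the two weak limits then gives \eqref{Mainformuladerivative0}. I expect the delicate point to be precisely the passage to the limit on the diagonal of $\mathbb{T}^{n+1}$: this is exactly where Lemma \ref{approxdivdiff2}, which guarantees pointwise convergence of $f_j^{[n]}$ everywhere rather than merely off the diagonal, is indispensable, since the scalar spectral measures of $U(t)$ may charge the diagonal when $U(t)$ has eigenvalues.
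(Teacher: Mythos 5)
Your proposal is correct and follows essentially the same route as the paper: Cayley transform plus Corollary \ref{FormulaSA3} for differentiability, then approximation by the $C^n(\mathbb{T})$ sequence of Lemma \ref{approxdivdiff2} and matching of weak $\mathcal{S}^p$-limits via Theorem \ref{MOISpBound} and Lemma \ref{LemmeUB} to convert the selfadjoint formula into the unitary one. The only cosmetic difference is that the paper justifies the pointwise convergence $(f_j\circ\eta)^{[m]}\to g^{[m]}$ (including on partial diagonals) through the explicit composition formula for divided differences of \cite[Lemma 2.3]{PoSkSu15}, whereas you invoke Fa\`a di Bruno for the derivatives of $g_j=f_j\circ\eta$ and then the standard reduction of divided-difference convergence to derivative convergence; both work, and your closing remark correctly identifies why Lemma \ref{approxdivdiff2} (pointwise convergence of $f_j^{[n]}$ everywhere) rather than Lemma \ref{approxdivdiff} is the right tool here.
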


\begin{proof}
By continuity of $U$, $U(t) \to U(0)$ as $t\to 0$ in the operator norm, and since $1\notin \sigma(U(0))$ and the spectrum is closed, there is a positive number $a>0$ and a real interval $I$ around $0$ such that, for every $t\in I$, $\sigma(U(t)) \subset C_a$ where $C_a := \{ z\in \mathbb{T} \mid |z-1|>a \}$. Note that all the functions of operators and the multiple operator integrals only depend on the values of the associated function on the spectra of the operators. In particular one can extend $\eta^{-1}$ from $C_a$ to a $C^{\infty}$ function on the whole $\mathbb{T}$ if necessary. Let us define, for every $t\in I$, $\mathcal{A}(t)=\eta^{-1}(U(t)) \in \mathcal{B}_{sa}(\hilh)$. Note that for every $t\in I$, $\tilde{\mathcal{A}}(t) := \mathcal{A}(t)-\mathcal{A}(0) \in \mathcal{S}^p(\hilh)$. Indeed, this follows either from \cite[Theorem 2]{AyCoSu16} or by the straightforward identity
$$
\mathcal{A}(t)-\mathcal{A}(0)=-2i\left( U(t)-I \right)^{-1}(U(t)-U(0)) \left( U(0)-I \right)^{-1}
$$
which yields
$$
\| \mathcal{A}(t)-\mathcal{A}(0) \|_p \leq 2 \| \left( U(t)-I \right)^{-1} \| \cdot \|U(t)-U(0)\|_p  \cdot \| \left( U(0)-I \right)^{-1} \|   < +\infty.
$$
Moreover, $\mathcal{A}$ is $n$-times differentiable on $I$. This follows either from \cite[Theorem 3.5]{CCGP} or simply using the fact that $\eta^{-1}$ is a rational function so one can use standards algebraic identities as above.
Let $g : t \in \mathbb{R} \mapsto f(\eta(t))$ and note that
$$
\varphi(t)=f(\eta(\eta^{-1}(U(t))) - f(\eta(\eta^{-1}(U(0))) = g(\mathcal{A}(t)) - g(\mathcal{A}(0)).
$$ The function $g$ is $n$-times differentiable and since $f$ and $\eta$ have bounded derivatives, $g$ has bounded derivatives as well. By Corollary \ref{FormulaSA3}, $\varphi$ is $n$-times differentiable on $I$ and for every $t \in I$,
\begin{align*}
\varphi^{(n)}(t)
& =\sum_{m=1}^{n} \sum_{\substack{l_1,\ldots,l_m\ge 1 \\ l_1+\cdots+l_m=n}}\dfrac{n!}{l_1!\cdots l_m!}\left[\Gamma^{(\mathcal{A}(t))^{m+1}}(g^{[m]})\right]\left(\tilde{\mathcal{A}}^{(l_1)}(t),\ldots,\tilde{\mathcal{A}}^{(l_m)}(t)\right)\\
& :=\sum_{m=1}^{n} \sum_{\substack{l_1,\ldots,l_m\ge 1 \\ l_1+\cdots+l_m=n}} D^{m,l_1,\ldots, l_m}_{g, \mathcal{A}}(t).
\end{align*}
It remains to show that for a fixed $t\in I$,
\begin{align*}
\varphi^{(n)}(t)
& = \sum_{m=1}^{n} \sum_{\substack{l_1,\ldots,l_m\ge 1 \\ l_1+\cdots+l_m=n}}\dfrac{n!}{l_1!\cdots l_m!}\left[\Gamma^{(U(t))^{m+1}}(f^{[m]})\right]\left(\tilde{U}^{(l_1)}(t),\ldots,\tilde{U}^{(l_m)}(t)\right) \\
& \hspace*{0,5cm} :=\sum_{m=1}^{n} \sum_{\substack{l_1,\ldots,l_m\ge 1 \\ l_1+\cdots+l_m=n}} D^{m,l_1,\ldots, l_m}_{f, U}(t).
\end{align*}
To prove this, let $(f_j)_j \subset C^n(\mathbb{T})$ be the sequence given by Lemma \ref{approxdivdiff2}. Then, for every $j\in \mathbb{N}$, the function
$$
\varphi_j : t\mapsto f_j(U(t)) - f(U(0)) \in \mathcal{S}^p(\hilh)
$$
is $n$-times differentiable on $I$ and
\begin{equation}\label{fromunitosa}
\sum_{m=1}^{n} \sum_{\substack{l_1,\ldots,l_m\ge 1 \\ l_1+\cdots+l_m=n}} D^{m,l_1,\ldots, l_m}_{f_j, U}(t) \labelrel={myeq:equality1} \varphi_j^{(n)}(t) \labelrel={myeq:equality2} \sum_{m=1}^{n} \sum_{\substack{l_1,\ldots,l_m\ge 1 \\ l_1+\cdots+l_m=n}} D^{m,l_1,\ldots, l_m}_{f_j \circ \eta, \mathcal{A}}(t).
\end{equation}
Indeed, since $f_j \in C^n(\mathbb{T})$, the equality \eqref{myeq:equality1} comes from Theorem \cite[Theorem 3.5]{CCGP}, while the equality \eqref{myeq:equality2} follows from the same computations performed for $f$ in the first part of the proof.

Fix $1\leq m \leq n$ and let $l_1,\ldots,l_m \geq 1$ be such that $l_1+\cdots + l_m = n$. The assumptions on $(f_j)_j$ ensure that $(f_j^{[m]})_j$ is $w^*$-convergent to $f^{[m]}$ for the $w^*$-topology of $L^{\infty}(\lambda_{U(t)} \times \cdots \times \lambda_{U(t)})$. By Lemma \ref{LemmeUB}, it follows that
$$
D^{m,l_1,\ldots, l_m}_{f_j, U}(t) \underset{j\to +\infty}{\longrightarrow} D^{m,l_1,\ldots, l_m}_{f, U}(t)
$$
weakly in $\mathcal{S}^p(\hilh)$. On the other hand, by \cite[Lemma 2.3]{PoSkSu15}, we have, for every $(\lambda_1, \ldots, \lambda_{m+1})\in \mathbb{R}^{m+1}$,
\begin{align*}
(f_j \circ \eta)^{[m]}(\lambda_1, \ldots, \lambda_{m+1})
& =\sum_{k=1}^m \sum_{1=i_0<\cdots<i_k=m+1} \frac{(-1)^{k+1}i^{m-k+1}}{2^{m-k+1}} f_j^{[k]}(\eta(\lambda_{i_0}),\ldots, \eta(\lambda_{i_k})) \\
& \hspace*{0,5cm} \times \prod_{j=1}^{k-1}(\eta(\lambda_{i_j})-1)^2 \prod_{l\in \{1,\ldots,m+1\} \setminus \{i_1,\ldots, i_{k-1} \} } (\eta(\lambda_l)-1).
\end{align*}
This formula holds true as well if $f_j$ is replaced by $f$, with the same proof. In particular, the pointwise convergence of $f_j^{[k]}$ to $f^{[k]}$ implies the pointwise convergence of $((f_j \circ \eta)^{[m]})_j$ to $(f \circ \eta)^{[m]}=g^{[m]}$. Together with the boundedness of each $(f_j^{[k]})_j$ and hence the boundedness of $((f_j \circ \eta)^{[m]})_j$, we get the $w^*$-convergence of $((f_j \circ \eta)^{[m]})_j$ to $g^{[m]}$ for the $w^*$-topology of $L^{\infty}(\lambda_{\mathcal{A}(t)} \times \cdots \times \lambda_{\mathcal{A}(t)})$. By Lemma \ref{LemmeUB} and the paragraph preceding it, it follows that
$$
D^{m,l_1,\ldots, l_m}_{f_j\circ \eta, \mathcal{A}}(t) \underset{j\to +\infty}{\longrightarrow} D^{m,l_1,\ldots, l_m}_{g, U}(t)
$$
weakly in $\mathcal{S}^p(\hilh)$. Finally, after taking the limit as $j\to +\infty$ in the weak topology of $\mathcal{S}^p(\hilh)$ in \eqref{fromunitosa}, we obtain that
$$
\sum_{m=1}^{n} \sum_{\substack{l_1,\ldots,l_m\ge 1 \\ l_1+\cdots+l_m=n}} D^{m,l_1,\ldots, l_m}_{f, U}(t) = \sum_{m=1}^{n} \sum_{\substack{l_1,\ldots,l_m\ge 1 \\ l_1+\cdots+l_m=n}} D^{m,l_1,\ldots, l_m}_{g, \mathcal{A}}(t),
$$
which gives the desired formula for $\varphi^{(n)}(t)$ and concludes the proof.
\end{proof}

\begin{rmrk}\label{okspectrumdiffT}
Proposition \ref{simplecasemainthm} holds true as well if we simply assume that $\sigma(U(0)) \neq \mathbb{T}$. Indeed, by picking $e^{i\theta} \notin \sigma(U(0))$ and changing the function $U$ into $e^{-i\theta}U$ (in that case, $1\notin \sigma(e^{-i\theta}U(0))$) and $f$ into $h(z) = f(e^{i\theta}z)$ we get that $\varphi : t\in \mathbb{R} \mapsto f(U(t))-f(U(0))=h(e^{-i\theta}U(t))- h(e^{-i\theta}U(0))$ so that $\varphi$ is differentiable in a neighborhood of $0$. Moreover, it is easy to check that $h^{[n]}(\lambda_1, \ldots, \lambda_{n+1}) = e^{in\theta} f^{[n]}(e^{i\theta}\lambda_1, \ldots, e^{i\theta}\lambda_{n+1})$ and $(e^{-i\theta}\tilde{U})^{(l_1)}(t) = e^{-i\theta}\tilde{U}^{(l_1)}(t)$ so that
\begin{align*}
\varphi^{(k)}(t)
& =\sum_{m=1}^{k} \sum_{\substack{l_1,\ldots,l_m\ge 1 \\ l_1+\cdots+l_m=k}}\dfrac{k!}{l_1!\cdots l_m!} \left[\Gamma^{(e^{-i\theta}U(t))^{m+1}}(h^{[m]})\right]\left(e^{-i\theta}\tilde{U}^{(l_1)}(t),\ldots,e^{-i\theta}\tilde{U}^{(l_m)}(t)\right) \\
& =\sum_{m=1}^{k} \sum_{\substack{l_1,\ldots,l_m\ge 1 \\ l_1+\cdots+l_m=k}}\dfrac{k!}{l_1!\cdots l_m!} \left[\Gamma^{(e^{-i\theta}U(t))^{m+1}}((f^{[m]})_r)\right]\left(\tilde{U}^{(l_1)}(t),\ldots,\tilde{U}^{(l_m)}(t)\right),
\end{align*}
where $(f^{[m]})_r(\lambda_1, \ldots, \lambda_{m+1}) = f^{[m]}(e^{i\theta}\lambda_1, \ldots, e^{i\theta}\lambda_{m+1})$. It is now easy to check (it comes from the construction of multiple operator integrals) that
$$
\Gamma^{(e^{-i\theta}U(t))^{m+1}}((f^{[m]})_r) = \Gamma^{(U(t))^{m+1}}(f^{[m]}).
$$
\end{rmrk}

\section{$\mathcal{S}^p$-differentiability for non continuously differentiable functions}\label{sectionmainresult}

\noindent In this section, we will prove the following which is the main result of this paper.

\begin{thm}\label{main}
Let $1<p<\infty$ and $n\in\N$. Let $U:\R\rightarrow\mathcal{U}(\hilh)$ be such that the function $\tilde{U}:t\in\R\mapsto U(t)-U(0)\in\Sp^p(\hilh)$ is $n$-times differentiable on $\mathbb{R}$. Let $f: \mathbb{T} \to \mathbb{R}$ be $n$-times differentiable with bounded $n$th derivative $f^{(n)}$. Consider the operator valued function
$$
\varphi : t\mapsto f(U(t)) - f(U(0)) \in \mathcal{S}^p(\hilh).
$$
Then $\varphi$ is $n$-times differentiable on $\mathbb{R}$ and for every integer $1\leq k  \leq n$ and every $t \in \mathbb{R}$,
\begin{equation}\label{Mainformuladerivative}
\varphi^{(k)}(t)=\sum_{m=1}^{k} \sum_{\substack{l_1,\ldots,l_m\ge 1 \\ l_1+\cdots+l_m=k}}\dfrac{k!}{l_1!\cdots l_m!}\left[\Gamma^{(U(t))^{m+1}}(f^{[m]})\right]\left(\tilde{U}^{(l_1)}(t),\ldots,\tilde{U}^{(l_m)}(t)\right).
\end{equation}
\end{thm}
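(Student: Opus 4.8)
The plan is to deduce Theorem \ref{main} from Proposition \ref{simplecasemainthm} (together with Remark \ref{okspectrumdiffT}) by a careful approximation of the path $U$. Since differentiability is a local and translation-invariant property, I would first fix $t_0\in\mathbb{R}$ and, replacing $U(\cdot)$ by $U(\cdot+t_0)$, reduce to showing that $\varphi$ is $n$-times differentiable in a neighbourhood of $0$ with formula \eqref{Mainformuladerivative}, arguing by induction on the order $k$ of the derivative. If $\sigma(U(0))\neq\mathbb{T}$, Remark \ref{okspectrumdiffT} already gives the conclusion, so the entire difficulty lies in the case $\sigma(U(0))=\mathbb{T}$, where the Cayley transform underlying Proposition \ref{simplecasemainthm} is unavailable.

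To treat that case I would approximate $U(0)$ by unitaries having a spectral gap. Because the essential spectrum is invariant under $\mathcal{S}^p$-perturbations, one cannot open a gap while staying in $\mathcal{S}^p$; instead I would only require strong convergence. Picking $e^{i\alpha}\in\mathbb{T}$ with $E^{U(0)}(\{e^{i\alpha}\})=0$ and letting $N_j$ be open arcs shrinking to $\{e^{i\alpha}\}$, set $V_j:=U(0)E^{U(0)}(\mathbb{T}\setminus N_j)+e^{i\alpha}E^{U(0)}(N_j)$. Then $V_j$ is unitary, $\sigma(V_j)\neq\mathbb{T}$, and $V_j\to U(0)$ strongly. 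I would transport this along the path by setting $U_j(t):=V_jU(0)^{*}U(t)$, which is unitary with $U_j(0)=V_j$ and satisfies $\tilde U_j(t)=W_j\tilde U(t)$ with $W_j:=V_jU(0)^{*}\to I$ strongly. Consequently $\tilde U_j$ is $n$-times $\mathcal{S}^p$-differentiable with $\tilde U_j^{(l)}(t)=W_j\tilde U^{(l)}(t)\to\tilde U^{(l)}(t)$ in $\mathcal{S}^p$-norm, while $U_j(t)\to U(t)$ strongly. This is the content I would isolate in Proposition \ref{Approxunit}. Applying the gap case to each $U_j$ then yields \eqref{Mainformuladerivative} for $\varphi_j:t\mapsto f(U_j(t))-f(U_j(0))$.

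It remains to pass to the limit $j\to\infty$. For a fixed term $\Gamma^{(U_j(t))^{m+1}}(f^{[m]})(\tilde U_j^{(l_1)}(t),\ldots,\tilde U_j^{(l_m)}(t))$ I would split its difference with the corresponding term for $U$ into an argument part and an operator part. The argument part is controlled by multilinearity together with the uniform bound $\|\Gamma^{(U_j(t))^{m+1}}(f^{[m]})\|\le C_{p,m}\|f^{(m)}\|_{L^\infty(\mathbb{T})}$ of Theorem \ref{MOISpBound} and the $\mathcal{S}^p$-convergence $\tilde U_j^{(l_i)}(t)\to\tilde U^{(l_i)}(t)$, so it tends to $0$ in $\mathcal{S}^p$-norm. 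The operator part, with fixed arguments and $U_j(t)\to U(t)$ only strongly, converges weakly in $\mathcal{S}^p$ by the strong stability of multiple operator integrals; this is where I would invoke Lemma \ref{intermlemma} and the $w^*$-continuity encoded in Lemma \ref{LemmeUB}. Hence each $\varphi_j^{(k)}(t)$ converges weakly to the candidate right-hand side of \eqref{Mainformuladerivative}, and similarly $\varphi_j(t)\to\varphi(t)$ weakly. Finally I would upgrade this to genuine differentiability of $\varphi$: using the uniform local bounds from Theorem \ref{MOISpBound}, the reflexivity of $\mathcal{S}^p$ and an integration argument, I would show inductively that $\varphi^{(k-1)}$ is differentiable with derivative equal to the weak limit of $\varphi_j^{(k)}$, which is exactly \eqref{Mainformuladerivative}.

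The main obstacle is the operator part at top order, namely the stability of $\Gamma^{(U_j(t))^{n+1}}(f^{[n]})$ under the strong convergence $U_j(t)\to U(t)$ for a symbol $f^{[n]}$ arising from a function $f$ whose $n$th derivative is merely bounded: multiple operator integrals are not norm-continuous in the operators, so one obtains only weak convergence and must combine it with the uniform Schatten bounds to recover differentiability of the limit. Controlling this, and ensuring that the estimates are uniform enough on compact neighbourhoods of $0$ to justify interchanging the limit in $j$ with differentiation in $t$, is the delicate point that the auxiliary results Proposition \ref{Approxunit} and Lemma \ref{intermlemma} are designed to handle.
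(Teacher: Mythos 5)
Your overall strategy (approximate $U(0)$ by unitaries with a spectral gap, apply Proposition \ref{simplecasemainthm} via Remark \ref{okspectrumdiffT}, then pass to the limit) matches the paper's, but the specific approximation you choose creates a gap that your limiting argument cannot close. You set $U_j(t):=V_jU(0)^{*}U(t)=W_jU(t)$ with $W_j\to I$ in operator norm; then $U_j(t)-U(t)=(W_j-I)U(t)$ is \emph{not} an $\mathcal{S}^p$-perturbation of $U(t)$, so the quantitative perturbation estimate of Corollary \ref{PerturbationFormula1} --- the only tool in the paper that controls a change of the base operators inside a multiple operator integral with a merely bounded symbol $f^{[m]}$ --- is unavailable. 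You are left claiming that $\Gamma^{(U_j(t))^{m+1}}(f^{[m]})\to\Gamma^{(U(t))^{m+1}}(f^{[m]})$ ``by the strong stability of multiple operator integrals,'' but no such stability under norm (let alone strong) convergence of the operators is proved or cited anywhere in the paper for non-continuous symbols; Lemma \ref{LemmeUB} concerns $w^*$-convergence of the \emph{symbols} with the operators fixed, and Proposition \ref{Approxunit} is an exact algebraic identity for compressions by spectral projections commuting with $V$, not a stability statement for your $W_jU(t)$.

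Even granting weak convergence $\varphi_j^{(k)}(t)\rightharpoonup$ (candidate RHS) for each fixed $t$, the final step fails: pointwise weak convergence of $\varphi_j$ and of $\varphi_j^{(n)}$ does not allow you to interchange $\lim_j$ with $\frac{d}{dt}$, because the remainder $o_j(t)$ in $\varphi_j^{(n-1)}(t)-\varphi_j^{(n-1)}(0)-t\varphi_j^{(n)}(0)=o_j(t)$ is not uniform in $j$. What the paper actually needs, and proves, is the estimate $\bigl\|\varphi^{(n-1)}(t)-\varphi_j^{(n-1)}(t)+\varphi_j^{(n-1)}(0)-\varphi^{(n-1)}(0)\bigr\|_p\le c\,\epsilon\,|t|$ for all $j$ large and $|t|$ small --- a first-order-in-$t$ bound uniform in $j$, after which a \emph{single} $j_0$ is fixed and its own $o_{j_0}(t)$ is used. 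To obtain that bound one needs $\|U(t)-U_j(t)\|_p\le\epsilon|t|$, which is precisely why the paper writes $U(t)=e^{i\mathcal{A}(t)}V$ with $\mathcal{A}(t)\in\mathcal{S}^p_{sa}(\hilh)$, $\mathcal{A}(0)=0$, and compresses the \emph{exponent}, $U_j(t)=e^{iP_j\mathcal{A}(t)P_j}VP_j$: Duhamel's formula then gives $\|e^{i\mathcal{A}(t)}V-e^{i\mathcal{A}_j(t)}V\|_p\le\|\mathcal{A}(t)-P_j\mathcal{A}(t)P_j\|_p\le\epsilon|t|$ (Lemma \ref{intermlemma}(1)), while the spectral gap $\sigma(VP_j)\subset A_j\neq\mathbb{T}$ is obtained simultaneously. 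Your construction achieves the gap but destroys the $\mathcal{S}^p$-quantitative control, and the acknowledged ``delicate point'' of uniformity is exactly the part that is not, and with your $U_j$ cannot be, carried out.
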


\begin{rmrk} It follows from the boundedness of multiple operator integrals given by Theorem \ref{MOISpBound} that if $\tilde{U}$ has bounded derivatives, then $\varphi$ has bounded derivatives as well.
\end{rmrk}

\begin{rmrk}\label{Taylorform} \begin{enumerate}
\item Once the formula for the derivatives of $\varphi$ has been established, it is easy to check, by induction, that the operator Taylor remainder defined by 
\begin{align}
\label{ref20}R_{n,f,U}(t):=f(U(t))-f(U(0))-\sum_{k=1}^{n-1} \dfrac{1}{k!}\varphi^{(k)}(0)
\end{align} 
satisfies, for any $t\in\R$,
\begin{equation}\label{TaylorFormula1}
R_{n,f,U}(t)=\sum_{m=1}^n\sum_{\substack{l_1,\ldots,l_m \ge 1 \\ l_1+\cdots+l_m=n}}\left[\Gamma^{U(t),(U(0))^{m}}(f^{[m]})\right]\left(R_{l_1,U}(t),\dfrac{\tilde{U}^{(l_2)}(0)}{l_2!},\ldots,\dfrac{\tilde{U}^{(l_m)}(0)}{l_m!}\right),
\end{equation}
where $R_{1,U}(t):=\tilde{U}(t)$ and for any $l_1\ge 2$,
\begin{equation*}
R_{l_{1},U}(t):=\tilde{U}(t)-\sum_{k=1}^{l_{1}-1} \dfrac{1}{k!}\tilde{U}^{(k)}(0).
\end{equation*}
We refer to the proof of \cite[Proposition 3.5 (ii)]{CCGP} for more details and references.
\item Theorem \ref{main} applies in particular to the function $U(t) = e^{itA}U$ with $U\in \mathcal{U}(\hilh)$ and $A\in \Sp^p_{sa}(\hilh)$, and we retrieve \cite[Corollary 3.6]{CCGP} in the more general case of a function $f$ with (non necessarily continuous) bounded $n$th derivative. In particular, with the same proof than that of \cite[Corollary 3.6]{CCGP}, we obtain that
\begin{align}\label{introremainderest}
\|R_{n,f,U}(1)\|_{\frac{p}{n}}\le \tilde{c}_{p,n}\sum_{m=1}^{n}\|f^{(m)}\|_{\infty}\|A\|_p^n,
\end{align}
where $\tilde{c}_{p,n}$ is a positive constant depending on $p$ and $n$. 
\end{enumerate}
\end{rmrk}

To prove Theorem \ref{main}, we will carefully approximate, on a subspace of the Hilbert space $\hilh$, the unitary operator $U(0)$ by another unitary whose spectrum is not the whole $\mathbb{T}$, in order to use Proposition \ref{simplecasemainthm}. The relevant definitions and the first properties of the approximation are given in Subsection \ref{subsecappruni}. The key auxiliary results from Lemma \ref{intermlemma} will detail the regularity of this approximation process.

\subsection{Approximation of unitaries}\label{subsecappruni}

Let $V\in \mathcal{U}(\hilh)$. Define, for every $j\geq 1$, $A_j:=\{e^{2i\pi t} \mid 0\leq t \leq \frac{j-1}{j} \}$ and set $P_j:=E^V(A_j)$. Then $(P_j)_{j\geq 1}$ is an increasing sequence of selfadjoint projections which converges strongly to $I_{\mathcal{H}}$. Recall that this implies that for every $K \in \mathcal{S}^p(\hilh)$, $P_j X$, $XP_j$ and $P_jXP_j$ converge to $X$ in $\mathcal{S}^p$ as $j\to +\infty$. Moreover, $P_j$ commutes with $V$ and the operator $V_j := P_jVP_j = P_jV = VP_j$ is unitary on the Hilbert space $\mathcal{H}_j := P_j \mathcal{H}$ and its spectrum satisfies $\sigma(V_j) \subset A_j$.

Note that if $K\in \mathcal{S}^p(\hilh)$, $P_j K P_j \in \mathcal{S}^p(\hilh)$ with $\| P_j K P_j \|_p \leq \| K \|_p$ and we can see $P_j K P_j$ as an element of $\mathcal{S}^p(\hilh_j)$. Similarly, if $X\in \mathcal{S}^p(\mathcal{H}_j)$, we can extend $X$ on $\hilh$ and keep denoting this operator by $X$, and in that case $P_jXP_j = X_{| \hilh_M} \oplus 0_{\mathcal{H}_M^{\perp}}$.

\begin{ppsn}\label{Approxunit}
Let $1<p<\infty$. Let $A\in \mathcal{S}^p_{sa}(\hilh)$ and define $A_j:=P_jAP_j$. Let $n \geq 2$ be an integer and let $f : \mathbb{T} \to \mathbb{C}$ be $n$-times differentiable such that $f^{(n)}$ is bounded.
\begin{enumerate}
\item For every $K_1,\ldots, K_{n-1}\in \mathcal{S}^p(\hilh)$ and every $j\in \mathbb{N}$,
\begin{align*}
\left[ \Gamma^{(e^{iA_j}V_j)^{n}}(f^{[n-1]}) \right](K_{1,j}, \ldots, K_{n-1,j})  = \left[ \Gamma^{(e^{iA_j}V)^{n}}(f^{[n-1]})  \right](K_{1,j}, \ldots, K_{n-1,j}),
\end{align*}
where $K_{i,j} := P_jK_iP_j$.
\item For every $K_1,\ldots, K_n\in \mathcal{S}^p(\hilh)$,
$$
\left[ \Gamma^{(VP_j)^{n+1}}(f^{[n]}) \right](K_{1,j},\ldots, K_{n,j})  \overset{\| \cdot  \|_p}{\underset{j\to +\infty}{\longrightarrow}} \left[ \Gamma^{(V)^{n+1}}(f^{[n]}) \right](K_1,\ldots, K_n).
$$
\end{enumerate}
\end{ppsn}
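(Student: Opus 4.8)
The plan is to exploit the block structure induced by the spectral projection $P_j$, which commutes with $V$. Since $A_j=P_jAP_j$ acts as $0$ on $\mathcal{H}_j^{\perp}$, one has $e^{iA_j}=e^{iA_j}|_{\mathcal{H}_j}\oplus I_{\mathcal{H}_j^{\perp}}$, and because $V$ commutes with $P_j$ the unitary $W:=e^{iA_j}V$ is block diagonal with respect to $\hilh=\mathcal{H}_j\oplus\mathcal{H}_j^{\perp}$, its restriction to $\mathcal{H}_j$ being exactly $e^{iA_j}V_j$. In both assertions the arguments $K_{i,j}=P_jK_iP_j$ are supported on $\mathcal{H}_j$, so the entire computation will collapse onto $\mathcal{H}_j$; the only real work is to transfer an elementary identity on monomial symbols to the non-smooth symbols $f^{[n-1]}$ and $f^{[n]}$.

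For part (1), I would first check the identity on monomial symbols $\varphi(\lambda_1,\ldots,\lambda_n)=\lambda_1^{k_1}\cdots\lambda_n^{k_n}$, $k_i\in\mathbb{Z}$. For such $\varphi$ the multiple operator integral is simply $W^{k_1}K_{1,j}W^{k_2}\cdots K_{n-1,j}W^{k_n}$; since each $K_{i,j}$ is supported on $\mathcal{H}_j$ and $W$ is block diagonal with $W|_{\mathcal{H}_j}=e^{iA_j}V_j$, this operator is supported on $\mathcal{H}_j$ and coincides with $[\Gamma^{(e^{iA_j}V_j)^{n}}(\varphi)](K_{1,j},\ldots,K_{n-1,j})$ computed on $\mathcal{H}_j$. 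By linearity this gives the identity for all trigonometric polynomials. To pass to $\varphi=f^{[n-1]}$, I would invoke Lemma \ref{approxdivdiff}: it produces trigonometric polynomials $f_\ell$ with $f_\ell^{[n-1]}\to f^{[n-1]}$ uniformly on $\mathbb{T}^{n}$ (this is the index $k=n-1$ of assertion (1)) and with $\|f_\ell^{(n-1)}\|_{\infty}$ bounded, so that by Theorem \ref{MOISpBound} the associated multiple operator integrals are uniformly bounded and $f_\ell^{[n-1]}\to f^{[n-1]}$ in the $w^*$-topology of both $L^{\infty}(\lambda_{e^{iA_j}V}\times\cdots\times\lambda_{e^{iA_j}V})$ and $L^{\infty}(\lambda_{e^{iA_j}V_j}\times\cdots\times\lambda_{e^{iA_j}V_j})$. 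Lemma \ref{LemmeUB} then lets me take the limit $\ell\to\infty$ on both sides and conclude.

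For part (2), the corresponding elementary computation shows that on monomial symbols
$$
[\Gamma^{(VP_j)^{n+1}}(\varphi)](K_{1,j},\ldots,K_{n,j})=P_j\left([\Gamma^{(V)^{n+1}}(\varphi)](P_jK_1P_j,\ldots,P_jK_nP_j)\right)P_j,
$$
since $V^kP_j=P_jV^k$ and $P_j^2=P_j$ allow all projections to be absorbed, with $(VP_j)^k=V^k|_{\mathcal{H}_j}$. I would extend this compression identity to $\varphi=f^{[n]}$ using Lemma \ref{approxdivdiff2}, whose assertion (2) provides pointwise convergence $f_\ell^{[n]}\to f^{[n]}$ on all of $\mathbb{T}^{n+1}$ (including the diagonal) and whose assertion (3) provides the uniform bound $\|f_\ell^{(n)}\|_{\infty}\le M$; by dominated convergence this yields $w^*$-convergence of the symbols in each of the two relevant $L^{\infty}$ spaces, and then Theorem \ref{MOISpBound} and Lemma \ref{LemmeUB} give convergence of the two sides. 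Once the identity holds for $f^{[n]}$, the stated $\mathcal{S}^p$-limit is routine: writing $T:=\Gamma^{(V)^{n+1}}(f^{[n]})\in\mathcal{B}_n(\mathcal{S}^p(\hilh))$, multilinearity together with the boundedness of $T$ and $P_jK_iP_j\to K_i$ give $T(P_jK_1P_j,\ldots,P_jK_nP_j)\to T(K_1,\ldots,K_n)$ in $\mathcal{S}^p$ by telescoping, and $P_j(\cdot)P_j\to(\cdot)$ in $\mathcal{S}^p$ by the convergence recalled before the statement, using $\|P_jZP_j\|_p\le\|Z\|_p$.

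The main obstacle is precisely the extension of these two algebraic identities from monomials to the genuinely non-continuously differentiable symbols. Part (1) is the easier of the two, since it only involves the $(n-1)$th divided difference, which can be made to converge \emph{uniformly}; part (2) involves $f^{[n]}$, whose approximants converge merely pointwise, so one must use the sharper Lemma \ref{approxdivdiff2} to keep control on the diagonal of $\mathbb{T}^{n+1}$ and exploit the uniform derivative bounds to feed into Theorem \ref{MOISpBound}. A secondary technical point is that the two multiple operator integrals are defined on different Hilbert spaces, so I must verify the symbol approximation separately against $L^1(\lambda_{V_j}\times\cdots\times\lambda_{V_j})$ and $L^1(\lambda_{V}\times\cdots\times\lambda_{V})$; this causes no difficulty because $\lambda_{V_j}$ is absolutely continuous with respect to $\lambda_V$, the spectral measure $E^{V_j}$ being the compression of $E^V$ to $\mathcal{H}_j$.
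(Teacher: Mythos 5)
Your proposal is correct in substance and its core mechanism is the same as the paper's: the commutation of $P_j$ with $V$ and with $e^{iA_j}$ makes everything block-diagonal, so both identities are verified on elementary symbols and then extended by approximation. The organisation of the extension differs, though. For part (1) your chain (monomials $\to$ trigonometric polynomials $\to$ $f_\ell^{[n-1]}$ for the Fej\'er approximants of Lemma \ref{approxdivdiff} $\to$ $f^{[n-1]}$ via Theorem \ref{MOISpBound} and Lemma \ref{LemmeUB}) is sound and close to the paper, which instead approximates the continuous symbol $f^{[n-1]}$ uniformly by polynomials on $\mathbb{T}^n$, proves the identity for $\mathcal{S}^2$ arguments, and passes to $\mathcal{S}^p$ by density. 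For part (2) the paper is more economical: it checks $f_i(VP_j)P_j=f_i(V)P_j$ for \emph{arbitrary bounded Borel} $f_i$ (writing $VP_j=g(V)$ with $g(x)=x\chi_{A_j}(x)$ and using the composition rule for Borel functional calculus), so the compression identity holds for every bounded symbol — in particular $f^{[n]}$ — by the built-in $w^*$-continuity of $\Gamma$ and the $w^*$-density of $L^\infty\otimes\cdots\otimes L^\infty$, with no approximation of $f$ at all. Your route through Lemma \ref{approxdivdiff2} also arrives there, but it contains one under-justified link: the identity for the symbols $f_\ell^{[n]}$ with $f_\ell\in C^n(\mathbb{T})$ does not follow ``by linearity'' from the monomial case, since these symbols are continuous but not trigonometric polynomials; and you cannot simply feed a sequence of general trigonometric polynomials converging uniformly to $f_\ell^{[n]}$ into Lemma \ref{LemmeUB}, because for symbols that are not divided differences Theorem \ref{MOISpBound} provides no uniform $\mathcal{B}_n(\mathcal{S}^p)$ bound. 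The standard repair — and what the paper effectively does — is to establish the algebraic identity first for $\mathcal{S}^2$ arguments, where $\Gamma$ is an $L^\infty$-isometry so uniform (or $w^*$) approximation of the symbol suffices, and only then extend to $\mathcal{S}^p$ arguments by density, using that both sides are $\mathcal{S}^p$-bounded once the symbol is a divided difference. With that step inserted your argument is complete; your final $\mathcal{S}^p$-limit in (2) and the remark that $\lambda_{V_j}\ll\lambda_V$ reconciles the two $L^\infty$ spaces are both correct and match the paper.
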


\begin{proof}
Let us prove $(1)$. Recall that since $f^{[n-1]}$ is bounded, we have, by construction, $\Gamma^{(e^{iA_j}V_j)^{n}}(f^{[n-1]}) \in \mathcal{B}_{n-1}(\mathcal{S}^2(\hilh))$. Let us first etablish the formula when $K_1, \ldots, K_{n-1} \in \mathcal{S}^2(\hilh)$. Since $f^{[n-1]}$ is continuous on $\mathbb{T}^n$ it is sufficient, by a simple approximation argument, to prove the formula when $f^{[n-1]}$ is replaced by a trigonometric polynomial $\varphi$ on $\mathbb{T}^n$ and by linearity, we can assume that $\varphi = f_1 \otimes \cdots \otimes f_n$, where for any $1\leq i \leq n, f_i$ is a trigonometric polynomial on $\mathbb{T}$. Since $P_j$ commutes with $e^{iA_j}$ and with $V$, it a easy to check that for every $1\leq i \leq n$,
$$
f_i(e^{iA_j}V_j) P_j = P_jf_i(e^{iA_j}V_j)  = P_jf_i(e^{iA_j}V) = f_i(e^{iA_j}V)P_j.
$$
It follows that
\begin{align*}
& \left[ \Gamma^{(e^{iA_j}V_j)^{n}}(\varphi) \right](K_{1,j}, \ldots, K_{n-1,j})  \\
& \hspace*{0,5cm} = f_1(e^{iA_j}V_j)P_jK_1P_jf_2(e^{iA_j}V_j)P_jK_2P_j \cdots P_jK_{n-1}P_jf_n(e^{iA_m}V_j) \\
& \hspace*{0,5cm} = f_1(e^{iA_j}V)P_jK_1P_jf_2(e^{iA_j}V)P_jK_2P_j \cdots P_jK_{n-1}P_jf_n(e^{iA_j}V) \\
& \hspace*{0,5cm} =  \left[ \Gamma^{(e^{iA_j}V)^{n}}(\varphi) \right](K_{1,j}, \ldots, K_{n-1,j}).
\end{align*}
This proves the formula for $\varphi = f^{[n-1]}$ and $K_1, \ldots, K_{n-1} \in \mathcal{S}^2(\hilh)$. In particular, the formula holds true when $K_i \in \mathcal{S}^2 \cap \mathcal{S}^p$, and approximating (in the $\mathcal{S}^p$-norm) any $K_i\in \mathcal{S}^p$ by a sequence of elements of $\mathcal{S}^2 \cap \mathcal{S}^p$ and using the fact that $\Gamma^{(e^{iA_j}V_j)^{n}}(f^{[n-1]}), \Gamma^{(e^{iA_j}V)^{n}}(f^{[n-1]}) \in \mathcal{B}_{n-1}(\mathcal{S}^p(\hilh))$, we obtain the desired formula.

For the proof of $(2)$, we only make some with minor changes: we first etablish that for every $K_1, \ldots, K_n \in \mathcal{S}^2(\hilh)$,
\begin{equation}\label{cutprojformula}
\left[ \Gamma^{(VP_j)^{n+1}}(f^{[n]}) \right](K_{1,j},\ldots, K_{n,j}) = \left[ \Gamma^{(V)^{n}}(f^{[n]})  \right](K_{1,j},\ldots, K_{n,j}).
\end{equation}
By the $w^{\ast}$-continuity of multiple operator integrals (see the paragraph before Definition \ref{DefMOI}), it is sufficient to prove the identity when $f^{[n]}$ is replaced by $\varphi \in L^{\infty}(\lambda_{V})\otimes\cdots\otimes L^{\infty}(\lambda_V)$, and by linearity, we can further assume $\varphi =f_1 \otimes \cdots \otimes f_{n+1}$, where for any $1\leq i \leq n+1$, $f_i\in L^{\infty}(\lambda_{V})$. Note that $VP_j = V \chi_{A_j}(V) = g_i(V)$ where $g_i(x)=x\chi_{A_j}(x)$ and it is straightforward to check that $(f_i \circ g_i) \chi_{A_j} = f_i \chi_{A_j}$. By \cite[Corollary 5.6.29]{KadRin}, we have
$$
f_i(VP_j)P_j  = (f_i \circ g_i)(V) \chi_{A_j}(V) = ((f_i\circ g_i)  \chi_{A_j})(V) = f_i(V)\chi_{A_j}(V)=f_i(V)P_j,
$$
and similarly, $P_jf_i(VP_j)=P_jf_i(V)$. The same computations performed to prove $(1)$ show that \ref{cutprojformula} hold true. Moreover, this formula extends, as before, when $K_1, \ldots, K_n \in \mathcal{S}^p(\hilh)$. Finally, the fact that $K_{i,j} \to K_i$ in $\mathcal{S}^p(\hilh)$ for every $1\leq i \leq  n$ together with $\Gamma^{(V)^{n+1}}(f^{[n]})\in \mathcal{B}_n(\mathcal{S}^p(\hilh))$ yield
\begin{align*}
\left[ \Gamma^{(VP_j)^{n+1}}(f^{[n]}) \right](K_{1,j},\ldots, K_{n,j})
& = \left[ \Gamma^{(V)^{n+1}}(f^{[n]})  \right](K_{1,j},\ldots, K_{n,j}) \\
& \underset{m\to +\infty}{\longrightarrow} \left[ \Gamma^{(V)^{n+1}}(f^{[n]})  \right](K_1,\ldots, K_n)
\end{align*}
in $\mathcal{S}^p(\hilh)$, which concludes the proof.
\end{proof}

\subsection{Proof of the main result}

In this subsection, we will prove Theorem \ref{main}. First of all, we need the following Lemma. We postpone its proof at the end of the paper to avoid repeating certain arguments and computations which, for some of them, will be very similar to those in the proof of Theorem \ref{main}.

\begin{lma}\label{intermlemma} Let $1<p<\infty$ and $n\in \mathbb{N}$. Let $V\in \mathcal{U}(\hilh)$. Let $\mathcal{A} : \mathbb{R} \to \mathcal{S}^p_{sa}(\hilh)$ be $n$-times differentiable on $\mathbb{R}$ with $\mathcal{A}(0)=0$. Define
$$
U(t):=e^{i\mathcal{A}(t)}V \quad \text{and} \quad \tilde{U}(t):=e^{i\mathcal{A}(t)}V - V \in \mathcal{S}^p(\hilh),
$$
and, for every $j\in \mathbb{N}$,
$$\mathcal{A}_j(t) := P_j \mathcal{A}(t)P_j, \quad U_j(t):=e^{i\mathcal{A}_j(t)}V_j \quad \text{and} \quad \widetilde{U_j}(t):=U_j(t)-U_j(0) \in \mathcal{S}^p(\hilh),$$
where $P_j$ and $V_j=VP_j$ are defined at the beginning of Subsection \ref{subsecappruni}.
Then, we have the following properties: 
\begin{enumerate}
\item For every $\epsilon > 0$, there exists $J\in \mathbb{N}$ and $\alpha>0$ such that
\begin{equation}\label{AmtoA1}
\forall \, j\geq J, \, \forall \, |t| < \alpha, \ \left\| e^{i\mathcal{A}(t)}V - e^{i\mathcal{A}_j(t)}V \right\|_p \leq \epsilon |t|.
\end{equation}
\item There exist $\alpha>0$ and a constant $C>0$ such that
\begin{equation}\label{AmtoA2}
\forall \, j\in \mathbb{N}, \, \forall \, |t| < \alpha, \ \left\| e^{i\mathcal{A}(t)}V - V \right\|_p \leq C |t| \quad \text{and} \quad \left\| e^{i\mathcal{A}_j(t)}V - V \right\|_p \leq C |t|.
\end{equation}
\item For every $j\in \mathbb{N}$, $\tilde{U}$ and $\widetilde{U_j}$ are $n$-times differentiable on $\mathbb{R}$ and for every $0\leq k \leq n$ and every $t\in \mathbb{R}$,
\begin{equation}\label{AmtoA3}
P_j\widetilde{U_j}^{(k)}(t)P_j = \widetilde{U_j}^{(k)}(t).
\end{equation}
\item For every $\epsilon > 0$, there exists $J\in \mathbb{N}$ and $\alpha>0$ such that, for every $1\leq k \leq n-1$,
\begin{equation}\label{AmtoA4}
\forall \, j\geq J, \, \forall \, |t| < \alpha, \ \| \tilde{U}^{(k)}(t) - \widetilde{U_j}^{(k)}(t) \|_p \leq \epsilon.
\end{equation}
\item Let $0 \leq k \leq n-1$. Then we can write
$$
\tilde{U}^{(k)}(t) = \tilde{U}^{(k)}(0) + t R_{k}(t) \quad \text{and} \quad \widetilde{U_j}^{(k)}(t) = \widetilde{U_j}^{(k)}(0) + t R^j_{{k}}(t),
$$
where, for every $\epsilon > 0$, there exist $J\in \mathbb{N}$ and $\alpha > 0$ such that
\begin{equation}\label{AmtoA5}
\forall \, j\geq J, \, \forall \, |t| < \alpha, \ \left\| R_k(t)-R_k^j(t)  \right\|_p \leq \epsilon.
\end{equation}
\end{enumerate}
\end{lma}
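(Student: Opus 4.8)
The plan is to reduce everything to the selfadjoint theory of Section \ref{sectionselfadj} through the identity $\tilde{U}(t)=(e^{i\mathcal{A}(t)}-I)V$, which holds because $\mathcal{A}(0)=0$. The basic tool is the Duhamel bound $\|e^{iB_1}-e^{iB_2}\|_p\le\|B_1-B_2\|_p$ for selfadjoint $B_1,B_2$ with $B_1-B_2\in\mathcal{S}^p$ (write $e^{iB_1}-e^{iB_2}=\int_0^1 e^{isB_1}\,i(B_1-B_2)\,e^{i(1-s)B_2}\,ds$ and use unitarity). Since $\mathcal{A}$ is differentiable at $0$ with $\mathcal{A}(0)=0$, one has $\mathcal{A}(t)=t\mathcal{A}'(0)+t\rho(t)$ with $\rho(t)\to0$ in $\mathcal{S}^p$; together with $\|\mathcal{A}_j(t)\|_p=\|P_j\mathcal{A}(t)P_j\|_p\le\|\mathcal{A}(t)\|_p$ and $\|(e^{i\mathcal{A}(t)}-I)V\|_p\le\|\mathcal{A}(t)\|_p$ this gives $(2)$ at once. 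For $(1)$ I would bound $\|e^{i\mathcal{A}(t)}V-e^{i\mathcal{A}_j(t)}V\|_p\le\|\mathcal{A}(t)-P_j\mathcal{A}(t)P_j\|_p$ and split this as $t(\mathcal{A}'(0)-P_j\mathcal{A}'(0)P_j)+t(\rho(t)-P_j\rho(t)P_j)$: the first term is $\le\frac{\epsilon}{2}|t|$ for $j$ large since $P_j\mathcal{A}'(0)P_j\to\mathcal{A}'(0)$ in $\mathcal{S}^p$, and the second is $\le 2|t|\,\|\rho(t)\|_p\le\frac{\epsilon}{2}|t|$ for $|t|$ small. For $(3)$, applying Corollary \ref{FormulaSA3} to $g(x)=e^{ix}$ (bounded derivatives of every order) shows $t\mapsto e^{i\mathcal{A}(t)}-I$, hence $\tilde{U}$, is $n$-times $\mathcal{S}^p$-differentiable, and likewise $\widetilde{U_j}$; since $\mathcal{A}_j(t)=P_j\mathcal{A}(t)P_j$ is supported on $\mathcal{H}_j$, so is $e^{i\mathcal{A}_j(t)}-I$, whence $\widetilde{U_j}(t)\in P_j\mathcal{S}^p(\hilh)P_j$ for all $t$; as this is a closed subspace, the difference quotients and thus all derivatives stay in it, giving $(3)$. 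This also yields $\widetilde{U_j}^{(k)}(t)V_j=\widetilde{U_j}^{(k)}(t)V$.

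For $(4)$ I would use the derivative formula of Corollary \ref{FormulaSA3} with $g(x)=e^{ix}$: writing $W(t)=e^{i\mathcal{A}(t)}-I$,
$$W^{(k)}(t)=\sum_{m=1}^{k}\sum_{\substack{l_1+\cdots+l_m=k\\ l_i\ge1}}\frac{k!}{l_1!\cdots l_m!}\left[\Gamma^{(\mathcal{A}(t))^{m+1}}(g^{[m]})\right]\left(\mathcal{A}^{(l_1)}(t),\ldots,\mathcal{A}^{(l_m)}(t)\right),\quad \tilde{U}^{(k)}(t)=W^{(k)}(t)V,$$
with the analogous formula for $\widetilde{U_j}^{(k)}(t)$ built from $\mathcal{A}_j(t)$ and $\mathcal{A}_j^{(l_i)}(t)=P_j\mathcal{A}^{(l_i)}(t)P_j$. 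Comparing the two termwise, the difference of each summand is controlled by (i) the selfadjoint perturbation estimate (the analogue of Corollary \ref{PerturbationFormula1}, see \cite{Co22}) for changing the tuple $(\mathcal{A}(t))^{m+1}$ into $(\mathcal{A}_j(t))^{m+1}$, which costs $C\|\mathcal{A}(t)-\mathcal{A}_j(t)\|_p$, small by $(1)$, and (ii) multilinearity together with the uniform bound $\|\Gamma^{(\mathcal{A}(t))^{m+1}}(g^{[m]})\|\le C_{p,m}\|g^{(m)}\|_\infty$ (the selfadjoint analogue of Theorem \ref{MOISpBound}, i.e.\ \cite[Theorem 2.7]{Co22}) for changing each argument $\mathcal{A}^{(l_i)}(t)$ into $\mathcal{A}_j^{(l_i)}(t)$. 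The only delicate point is uniformity in $t$: since $l_i\le k\le n-1$, each $\mathcal{A}^{(l_i)}$ is continuous into $\mathcal{S}^p$, so $\{\mathcal{A}^{(l_i)}(t):|t|\le\alpha\}$ is compact; the uniformly bounded maps $X\mapsto X-P_jXP_j$ converge pointwise to $0$, hence uniformly on compacts, so $\sup_{|t|\le\alpha}\|\mathcal{A}^{(l_i)}(t)-\mathcal{A}_j^{(l_i)}(t)\|_p\to0$. This gives $(4)$.

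For $(5)$, when $0\le k\le n-2$ one writes $R_k(t)-R_k^{j}(t)=\frac1t\int_0^t\bigl(\tilde{U}^{(k+1)}(s)-\widetilde{U_j}^{(k+1)}(s)\bigr)\,ds$ and concludes from $(4)$ at order $k+1\le n-1$. The genuinely hard case, and the main obstacle of the lemma, is $k=n-1$: here $R_{n-1}(t)-R_{n-1}^{j}(t)=\frac1t\bigl(g(t)-g(0)\bigr)$ with $g(t)=\tilde{U}^{(n-1)}(t)-\widetilde{U_j}^{(n-1)}(t)$, and one \emph{cannot} differentiate once more and invoke $(4)$, since the $n$-th derivatives $\tilde{U}^{(n)},\widetilde{U_j}^{(n)}$ are precisely what is uncontrolled (they need not even be continuous). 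To circumvent this I would factor out a power of $t$ from the increment before dividing. For each summand $\Phi^{(m,l)}_{\mathcal{A}}(t)=[\Gamma^{(\mathcal{A}(t))^{m+1}}(g^{[m]})](\mathcal{A}^{(l_1)}(t),\ldots,\mathcal{A}^{(l_m)}(t))$ of $W^{(n-1)}(t)$, telescoping the increment $\Phi^{(m,l)}_{\mathcal{A}}(t)-\Phi^{(m,l)}_{\mathcal{A}}(0)$ produces, via multilinearity, terms carrying the factor $\mathcal{A}^{(l_i)}(t)-\mathcal{A}^{(l_i)}(0)=t\mu_i(t)$, and, via the selfadjoint analogue of Corollary \ref{PerturbationFormula2} for changing $(\mathcal{A}(0))^{m+1}\to(\mathcal{A}(t))^{m+1}$, terms carrying the factor $\mathcal{A}(t)-\mathcal{A}(0)=\mathcal{A}(t)=t\mu(t)$, where $\mu(t)=\mathcal{A}(t)/t$ and $\mu_i(t)=(\mathcal{A}^{(l_i)}(t)-\mathcal{A}^{(l_i)}(0))/t$ are bounded in $\mathcal{S}^p$ on $|t|\le\alpha$ and converge as $t\to0$ (as $l_i\le n-1$). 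Hence $\Phi^{(m,l)}_{\mathcal{A}}(t)-\Phi^{(m,l)}_{\mathcal{A}}(0)=t\,\Psi^{(m,l)}_{\mathcal{A}}(t)$ for a finite sum $\Psi^{(m,l)}_{\mathcal{A}}(t)$ of multiple operator integrals in $\mathcal{A}(t),\mathcal{A}(0)$, and similarly for $\mathcal{A}_j$, so the $t$ cancels the $1/t$ and
$$R_{n-1}(t)-R_{n-1}^{j}(t)=\sum_{m,l}c_{m,l}\left(\Psi^{(m,l)}_{\mathcal{A}}(t)-\Psi^{(m,l)}_{\mathcal{A}_j}(t)\right)V.$$

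It then remains to bound $\|\Psi^{(m,l)}_{\mathcal{A}}(t)-\Psi^{(m,l)}_{\mathcal{A}_j}(t)\|_p\le\epsilon$ uniformly in $t$, which I would do exactly as in $(4)$. The decisive gain is that the normalized increments behave well under the cutoff: $\mu_j(t)=P_j\mu(t)P_j$ and $\mu_i^{j}(t)=P_j\mu_i(t)P_j$, so $\|\mu(t)-\mu_j(t)\|_p=\frac1{|t|}\|\mathcal{A}(t)-\mathcal{A}_j(t)\|_p\le\epsilon$ by $(1)$ — here the $t$ cancels a second time — while the remaining tuple and argument differences are handled by the same compactness argument as in $(4)$ (the families $\{\mu(t)\}$, $\{\mu_i(t)\}$, extended by their limits at $t=0$, are compact in $\mathcal{S}^p$) together with the simplification $\mathcal{A}(0)=\mathcal{A}_j(0)=0$, which removes any difference in the $\mathcal{A}(0)$-slots. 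The whole difficulty of the lemma is thus concentrated in this $k=n-1$ case, namely estimating the remainder quotient without access to the $n$-th derivative; it is resolved by the perturbation-formula factorization of one power of $t$ combined with the exact cancellation $\|\mu(t)-\mu_j(t)\|_p=|t|^{-1}\|\mathcal{A}(t)-\mathcal{A}_j(t)\|_p$.
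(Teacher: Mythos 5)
Your proof is correct, and for items $(1)$--$(3)$ it coincides with the paper's argument: Duhamel's formula, the expansion $\mathcal{A}(t)=t\mathcal{A}'(0)+o(t)$ combined with $\|P_jXP_j\|_p\le\|X\|_p$, Corollary \ref{FormulaSA3} applied to $g(x)=e^{ix}$, and the commutation of $P_j$ with $e^{i\mathcal{A}_j(t)}$ and $V$. The remaining differences are organizational rather than substantive. In $(4)$, the paper obtains uniformity in $t$ by first moving each argument $\mathcal{A}^{(l_k)}(t)$ to $\mathcal{A}^{(l_k)}(0)$ (continuity at $0$) and then cutting off at that single operator, whereas you invoke uniform convergence of the uniformly bounded maps $X\mapsto X-P_jXP_j$ on the compact set $\{\mathcal{A}^{(l_k)}(t):|t|\le\alpha\}$; both work, and yours is marginally cleaner. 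In $(5)$, the paper treats all $0\le k\le n-1$ at once by splitting the double increment into three pieces $L_1^j+L_2^j+L_3^j$, extracting the factor $|t|$ from, respectively, the bound $\|\mathcal{A}(t)-\mathcal{A}_j(t)\|_p\le\epsilon|t|$ of item $(1)$, the Lipschitz estimate $\|T_t-T_0\|\le D_2\|\mathcal{A}(t)\|_p$, and the increments $\mathcal{A}^{(l_k)}(t)-\mathcal{A}^{(l_k)}(0)=tG_k(t)$; you instead dispose of $k\le n-2$ by the fundamental theorem of calculus plus item $(4)$ at order $k+1$ (a legitimate shortcut the paper does not take), and for $k=n-1$ you factor $t$ out of each increment first, via the perturbation formula and multilinearity, and only then compare the resulting expressions. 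The decisive cancellation you isolate, $\|\mu(t)-P_j\mu(t)P_j\|_p=|t|^{-1}\|\mathcal{A}(t)-\mathcal{A}_j(t)\|_p\le\epsilon$, is exactly how the paper's term $L_1^j$ is controlled, so the two proofs rest on identical quantitative inputs (item $(1)$, the $\mathcal{S}^p$-boundedness and perturbation estimates for multiple operator integrals in their selfadjoint form from \cite{Co22}, and $P_jXP_j\to X$) and differ only in bookkeeping.
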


\begin{proof}[Proof of Theorem \ref{main}] The proof will be divided into three steps. First, we show that we can rewrite the function $U$ in a convenient way. Next, we will approximate the unitary $U(0)$ in order to use Proposition \ref{simplecasemainthm}. Finally, thanks to several different estimates that will be using Lemma \ref{intermlemma}, we will obtain the result.\\

\textbf{Step 1. Simplification of the function U.} First of all, note that by translation, it is sufficient to prove the result for $t=0$. Let $V:=U(0)\in \mathcal{U}(\hilh)$. By continuity of $U$, $U(t)V^* \to I$ as $t\to 0$ in the operator norm, so that, for $t\in I$ where $I$ is a real interval centered at $t=0$, $\|U(t)V^* - I \| < \frac{1}{2}$. In particular, we can set $\mathcal{A}(t) := -i \log(U(t)V^*)$ and we get that $\mathcal{A}(t) \in \mathcal{B}_{sa}(\hilh)$. This function satisfies $\mathcal{A}(0)=0$ and $e^{i\mathcal{A}(t)}V = U(t)$. Moreover, the assumption $U(t) - U(0) \in \mathcal{S}^p(\hilh)$ implies that $\mathcal{A}(t)\in \mathcal{S}^p_{sa}(\hilh)$ and since $\mathcal{A}(t)$ is obtained by means of a power series, the fact that $\tilde{U} : \mathbb{R} \to \mathcal{S}^p(\hilh)$ is $n$-times differentiable on $\mathbb{R}$ implies that $\mathcal{A} : I \to \mathcal{S}^p_{sa}(\hilh)$ is $n$-times differentiable on $I$. Alternatively, since $\log$ is $C^{\infty}$ in a neighborhood of $1$, this follows as well from \cite[Theorem 3.5]{CCGP}. Hence, from now on, we will assume that
$$\forall \, t\in I, \ U(t)=e^{i\mathcal{A}(t)}V,$$
where $\mathcal{A}$ has the properties given above.\\

\textbf{Step 2. Initiation of the approximation process.} Define, for every $j\geq 1$, $A_j:=\{e^{2i\pi t} \mid 0\leq t \leq \frac{j-1}{j} \}$ and set $P_j:=E^V(A_j)$, as in Subsection \ref{subsecappruni}. Recall that the operator $V_j := P_jVP_j  = P_jV=VP_j$ is unitary on the Hilbert space $\mathcal{H}_j := P_j \mathcal{H}$ and its spectrum satisfies $\sigma(V_j) \subset A_j$. Define
$$\mathcal{A}_j(t) := P_j \mathcal{A}(t)P_j, \quad U_j(t):=e^{i\mathcal{A}_j(t)}V_j \quad \text{and} \quad \widetilde{U_j}(t):=U_j(t)-U_j(0) \in \mathcal{S}^p,$$
where $\mathcal{A}_j(t)$ and $\tilde{U_j}(t)$ can be seen either as elements of $\mathcal{S}^p(\mathcal{H}_j)$ or $\mathcal{S}^p(\mathcal{H})$.

Now, define
$$
\varphi_j : t\in \mathbb{R} \mapsto f(U_j(t)) - f(U_j(0)) \in \mathcal{S}^p(\mathcal{H}_j).
$$
The operator $\varphi_j(t)$ acts as well on $\mathcal{H}$ and is equal to $0$ on $\mathcal{H}_j^{\perp}$. Since $e^{i\mathcal{A}_j(t)}V_j \in \mathcal{U}(\mathcal{H}_j)$ and $\sigma(V_j) \subset A_j$ and hence $\sigma(V_j) \neq \mathbb{T}$, by Proposition \ref{simplecasemainthm} and Remark \ref{okspectrumdiffT}, $\varphi_j$ is $n$-times differentiable in a neighborhood of $0$, which we can assume to be equal to $I$, so that we can write, for every $t\in I$,
\begin{equation}\label{diffphim}
\varphi_j^{(n-1)}(t) - \varphi_j^{(n-1)}(0) - t\varphi_j^{(n)}(0) = o_j(t),
\end{equation}
where $o_j(t)=o(t)$ depends on $j$, and where $\varphi_j^{(n-1)}$ and $\varphi_j^{(n)}$ are given by Formula \eqref{Mainformuladerivative}.

Since $f \in C^{n-1}(\mathbb{T})$, by \cite[Theorem 3.5]{CCGP}, $\varphi$ is $(n-1)$-times differentiable on $\mathbb{R}$ and for every $t\in \mathbb{R}$,
\begin{align*}
\varphi^{(n-1)}(t)=\sum_{m=1}^{n-1} \sum_{\substack{l_1,\ldots,l_m\ge 1 \\ l_1+\cdots+l_m=n-1}}\dfrac{(n-1)!}{l_1!\cdots l_m!}\left[\Gamma^{(U(t))^{m+1}}(f^{[m]})\right]\left(\tilde{U}^{(l_1)}(t),\ldots,\tilde{U}^{(l_m)}(t)\right).
\end{align*}
Let us define
\begin{align*}
T := \sum_{m=1}^{n} \sum_{\substack{l_1,\ldots,l_m\ge 1 \\ l_1+\cdots+l_m=n}}\dfrac{n!}{l_1!\cdots l_m!}\left[\Gamma^{(V)^{m+1}}(f^{[m]})\right]\left(\tilde{U}^{(l_1)}(0),\ldots,\tilde{U}^{(l_m)}(0)\right)
\end{align*}
and
\begin{align*}
T_j := \sum_{m=1}^{n} \sum_{\substack{l_1,\ldots,l_m\ge 1 \\ l_1+\cdots+l_m=n}}\dfrac{n!}{l_1!\cdots l_m!}\left[\Gamma^{(V_j)^{m+1}}(f^{[m]})\right]\left(\widetilde{U_j}^{(l_1)}(0),\ldots,\widetilde{U_j}^{(l_m)}(0)\right).
\end{align*}
In particular $\varphi_j^{(n)}(0) = T_j$. To prove the Theorem, we have to show that
\begin{equation}\label{toprovediff}
\varphi^{(n-1)}(t) - \varphi^{(n-1)}(0) - tT = o(t)
\end{equation}
as $t\to 0$. Note that if $n=1$, we do not use \cite[Theorem 3.5]{CCGP} and \eqref{toprovediff} reduces to
$$
f(U(t))-f(V) - t\left[\Gamma^{V,V}(f^{[1]})\right](\tilde{U}'(0)) = o(t).
$$
To prove our claim, let us write, for every $j\in \mathbb{N}$,
\begin{align}
& \nonumber \varphi^{(n-1)}(t) - \varphi^{(n-1)}(0) - tT \\
&\label{reformulate1} \hspace*{0,5cm} = L_{j}(t) - t(T-T_j) + \left( \varphi_j^{(n-1)}(t) - \varphi_j^{(n-1)}(0) - t\varphi_j^{(n)}(0) \right),
\end{align}
where
$$
L_{j}(t) := \varphi^{(n-1)}(t) - \varphi_j^{(n-1)}(t) + \varphi_j^{(n-1)}(0) - \varphi^{(n-1)}(0).
$$
First, we will estimate the quantity $T-T_j$ uniformly for $j$ large enough, and secondly, we will estimate the term $L_j(t)$ for $t$ small enough and $j$ large enough. Eventually, we will use \eqref{diffphim} to estimate the last term appearing in \eqref{reformulate1}, for a fixed integer $j$. \\

\textbf{Step 3. Estimates in the approximation process.} Let us fix $\epsilon >0$.

\noindent \underline{Estimate of $T-T_j$}. Let $1\leq m \leq n$ and let $l_1,\ldots,l_m\ge 1$ be such that $l_1+\cdots+l_m=n$. According to Proposition \ref{Approxunit} $(2)$, if $j\geq J^1$ is large enough,
\begin{align*}
& \left\| \left[\Gamma^{(V)^{m+1}}(f^{[m]})\right]\left(\tilde{U}^{(l_1)}(0),\ldots,\tilde{U}^{(l_m)}(0)\right) \right. \\
& \hspace*{1,5cm} \left. - \left[\Gamma^{(V_j)^{m+1}}(f^{[m]})\right]\left(P_j\tilde{U}^{(l_1)}(0)P_j,\ldots,P_j \tilde{U}^{(l_m)}(0)P_j\right) \right\|_p \\
& \leq \epsilon,
\end{align*}
and, according to Lemma \ref{intermlemma} $(3)$ and $(4)$, and to the uniform boundedness of $\left( \Gamma^{(V_j)^{m+1}}(f^{[m]}) \right)_j$, if $j\geq J^2$ is large enough,
\begin{align*}
& \left\| \left[\Gamma^{(V_j)^{m+1}}(f^{[m]})\right]\left(P_j\tilde{U}^{(l_1)}(0)P_j,\ldots,P_j \tilde{U}^{(l_m)}(0)P_j\right) \right. \\
& \hspace*{1,5cm} \left. - \left[\Gamma^{(V_j)^{m+1}}(f^{[m]})\right]\left(\widetilde{U_j}^{(l_1)}(0),\ldots,\widetilde{U_j}^{(l_m)}(0)\right) \right\|_p \\
& \leq \epsilon.
\end{align*}
It follows that for every $j\geq \max \{ J^1,J^2 \} =: J$,
\begin{align*}
& \left\| \left[\Gamma^{(V)^{m+1}}(f^{[m]})\right]\left(\tilde{U}^{(l_1)}(0),\ldots,\tilde{U}^{(l_m)}(0)\right) - \left[\Gamma^{(V_j)^{m+1}}(f^{[m]})\right]\left(\widetilde{U_j}^{(l_1)}(0),\ldots,\widetilde{U_j}^{(l_m)}(0)\right) \right\|_p \leq 2\epsilon.
\end{align*}
Hence, since $T$ and $T_j$ are finite sums of such terms, there exist a constant $c_0$ and an integer $J_0$ such that
\begin{equation}\label{estimate3}
\forall \, j \geq J_0, \quad \|T-T_j\|_p \leq c_0 \epsilon.
\end{equation}

\noindent \underline{Estimate of $L_j(t)$}. Recall that
\begin{align*}
\varphi_j^{(n-1)}(t)
& = \sum_{m=1}^{n-1} \sum_{\substack{l_1,\ldots,l_m\ge 1 \\ l_1+\cdots+l_m=n-1}}\dfrac{(n-1)!}{l_1!\cdots l_m!}\left[\Gamma^{(U_j(t))^{m+1}}(f^{[m]})\right]\left(\widetilde{U_j}^{(l_1)}(t),\ldots,\widetilde{U_j}^{(l_m)}(t)\right) \\
& = \sum_{m=1}^{n-1} \sum_{\substack{l_1,\ldots,l_m\ge 1 \\ l_1+\cdots+l_m=n-1}}\dfrac{(n-1)!}{l_1!\cdots l_m!}\left[\Gamma^{(e^{i\mathcal{A}_j(t)}V)^{m+1}}(f^{[m]})\right]\left(\widetilde{U_j}^{(l_1)}(t),\ldots,\widetilde{U_j}^{(l_m)}(t)\right),
\end{align*}
where the last equality follows from Lemma \ref{intermlemma} $(3)$ and Proposition \ref{Approxunit} $(1)$. When $t=0$, $e^{i\mathcal{A}_j(0)}V = V$ so we have
$$
\varphi_j^{(n-1)}(0) = \sum_{m=1}^{n-1} \sum_{\substack{l_1,\ldots,l_m\ge 1 \\ l_1+\cdots+l_m=n-1}}\dfrac{(n-1)!}{l_1!\cdots l_m!}\left[\Gamma^{(V)^{m+1}}(f^{[m]})\right]\left(\widetilde{U_j}^{(l_1)}(0),\ldots,\widetilde{U_j}^{(l_m)}(0)\right).
$$
Fix $1\leq m \leq n-1$ and $l_1,\ldots,l_m\ge 1$ such that $l_1+\cdots+l_m=n-1$. Since 
$$
L_j(t) = \varphi^{(n-1)}(t) - \varphi_j^{(n-1)}(t) + \varphi_j^{(n-1)}(0) - \varphi^{(n-1)}(0),
$$
according to the latter and by linearity, if we show that
\begin{align*}
& R_j(t) \\
& := \left[\Gamma^{(e^{i\mathcal{A}(t)}V)^{m+1}}(f^{[m]})\right]\left(\tilde{U}^{(l_1)}(t),\ldots,\tilde{U}^{(l_m)}(t)\right) - \left[\Gamma^{(e^{i\mathcal{A}_j(t)}V)^{m+1})}(f^{[m]})\right]\left(\widetilde{U_j}^{(l_1)}(t),\ldots,\widetilde{U_j}^{(l_m)}(t)\right)\\
& \hspace*{1cm} + \left[\Gamma^{(V)^{m+1}}(f^{[m]})\right]\left(\widetilde{U_j}^{(l_1)}(0),\ldots,\widetilde{U_j}^{(l_m)}(0)\right) - \left[\Gamma^{(V)^{m+1}}(f^{[m]})\right]\left(\tilde{U}^{(l_1)}(0),\ldots,\tilde{U}^{(l_m)}(0)\right)
\end{align*}
satisfies $\| R_j(t) \|_p \leq c \epsilon |t|$ for some constant $c$ depending only on $p, m,l_1,\ldots, l_m, f$ and where $j$ is large enough and $t$ small enough, a similar inequality will hold true for $\|L_j(t)\|_p$.

Let us write
\begin{align*}
R_j(t) = S_{1,j}(t) + S_{2,j}(t) + S_{3,j}(t),
\end{align*}
where
$$
S_{1,j}(t) = \left[\Gamma^{(e^{i\mathcal{A}(t)}V)^{m+1}}(f^{[m]}) - \Gamma^{(e^{i\mathcal{A}_j(t)}V)^{m+1}}(f^{[m]})\right]\left(\tilde{U}^{(l_1)}(t),\ldots,\tilde{U}^{(l_m)}(t)\right),
$$
\begin{align*}
S_{2,j}(t)
& = \left[\Gamma^{(e^{i\mathcal{A}_j(t)}V)^{m+1}}(f^{[m]}) - \Gamma^{(V)^{m+1}}(f^{[m]}) \right]\left(\tilde{U}^{(l_1)}(t),\ldots,\tilde{U}^{(l_m)}(t)\right) \\
& \hspace*{0.5cm} + \left[\Gamma^{(V)^{m+1}}(f^{[m]})\right]\left(\widetilde{U_j}^{(l_1)}(t),\ldots,\widetilde{U_j}^{(l_m)}(t)\right) \\
& \hspace*{0.5cm} - \left[\Gamma^{(e^{i\mathcal{A}_j(t)}V)^{m+1}}(f^{[m]})\right]\left(\widetilde{U_j}^{(l_1)}(t),\ldots,\widetilde{U_j}^{(l_m)}(t)\right),
\end{align*}
and
\pagebreak
\begin{align*}
S_{3,j}(t)
& = \left[\Gamma^{(V)^{m+1}}(f^{[m]}) \right]\left(\tilde{U}^{(l_1)}(t),\ldots,\tilde{U}^{(l_m)}(t)\right) - \left[\Gamma^{(V)^{m+1}}(f^{[m]}) \right]\left(\tilde{U}^{(l_1)}(0),\ldots,\tilde{U}^{(l_m)}(0)\right) \\
& + \left[\Gamma^{(V)^{m+1}}(f^{[m]})\right]\left(\widetilde{U_j}^{(l_1)}(0),\ldots,\widetilde{U_j}^{(l_m)}(t)\right) - \left[\Gamma^{(V)^{m+1}}(f^{[m]})\right]\left(\widetilde{U_j}^{(l_1)}(t),\ldots,\widetilde{U_j}^{(l_m)}(t)\right).
\end{align*}

First, note that $\tilde{U}$ is $n$-times differentiable and $1\leq l_k \leq n-1$, so the derivatives $\tilde{U}^{(l_k)}$ are $\mathcal{S}^p$-bounded in a neighborhood of $0$. Hence, according to Corollary \ref{PerturbationFormula1} and Lemma \ref{intermlemma} $(1)$, there exist a constant $c_1$, an integer $J_1$ and $\alpha>0$ such that
\begin{equation}\label{estimate7}
\forall \, j\geq J_1, \ \forall \ |t| < \alpha, \ \| S_1(t) \|_p \leq c_1 \left\| e^{i\mathcal{A}_j(t)}V - e^{i\mathcal{A}(t)}V \right\| \leq c_1 \epsilon |t|.
\end{equation}

Next, according to Proposition \ref{PerturbationFormula2}, we have
\begin{align*}
& S_{2,j}(t) \\
& = t \sum_{q=1}^{m+1} \left( \left[\Gamma^{(e^{i\mathcal{A}_j(t)}V)^{q},(V)^{m-q+1}}(f^{[m+1]})\right]\left(\tilde{U}^{(l_1)}(t),\ldots,\tilde{U}^{(l_{q-1})}(t), \frac{e^{i\mathcal{A}_j(t)}V - V}{t}, \tilde{U}^{(l_{q})}(t) , \ldots, \tilde{U}^{(l_m)}(t) \right) \right. \\
&\hspace*{0.5cm} - \left. \left[\Gamma^{(e^{i\mathcal{A}_j(t)}V)^{q},(V)^{m-q+1}}(f^{[m+1]})\right]\left(\widetilde{U_j}^{(l_1)}(t),\ldots,\widetilde{U_j}^{(l_{q-1})}(t), \frac{e^{i\mathcal{A}_j(t)}V - V}{t}, \widetilde{U_j}^{(l_{q})}(t) , \ldots, \widetilde{U_j}^{(l_m)}(t) \right) \right).
\end{align*}
According to Lemma \ref{intermlemma} $(2)$ and $(4)$, there exist $\beta>0$ and an integer $J_2\in \mathbb{N}$, such that, for every $1\leq k\leq m$, for every $j\geq J_2$ and every $|t| < \beta$,
$$
\| \tilde{U}^{(l_k)}(t) - \widetilde{U_j}^{(l_k)}(t) \|_p \leq \epsilon \quad \text{and} \quad \left\| \frac{e^{i\mathcal{A}_j(t)}V - V}{t} \right\|_p \ \text{is bounded}.
$$
Since $\tilde{U}^{(l_k)}(t)$ and $\widetilde{U_j}^{(l_k)}(t)$, $1\leq k \leq m$, are locally bounded around $0$, and using the fact that the operators $\Gamma^{(e^{i\mathcal{A}_j(t)}V)^{q},(V)^{m-q+1}}(f^{[m+1]})$ are uniformly bounded with respect to $t$, there exists a constant $C$ depending on $p$, $m$, $f$ and $U$ such that
\begin{equation}\label{estimate8}
\forall j\geq J_2, \, \forall |t| < \beta, \ \| S_{2,j}(t) \|_p \leq |t| \sum_{i=1}^{m+1} C \epsilon =: c_2 \epsilon |t|.
\end{equation}
Finally, to estimate $S_{3,j}(t)$, let us write, according to Lemma \ref{intermlemma} $(5)$,
$$
\tilde{U}^{(l_k)}(t) = \tilde{U}^{(l_k)}(0) + t R_{l_k}(t) \quad \text{and} \quad \widetilde{U_j}^{(l_k)}(t) = \widetilde{U_j}^{(l_k)}(0) + t R^j_{{l_k}}(t).
$$
It follows that
\begin{align*}
& \left[\Gamma^{(V)^{m+1}}(f^{[m]}) \right]\left(\tilde{U}^{(l_1)}(t),\ldots,\tilde{U}^{(l_m)}(t)\right) - \left[\Gamma^{(V)^{m+1}}(f^{[m]}) \right]\left(\tilde{U}^{(l_1)}(0),\ldots,\tilde{U}^{(l_m)}(0)\right) \\
& = \sum_{\substack{A_i(t) = \tilde{U}^{(l_i)}(0) \ \text{or} \ t R_{l_i}(t) \\ \exists \, 1 \leq i \leq m, \ A_i(t) = t R_{l_i}(t)}} \left[\Gamma^{(V)^{m+1}}(f^{[m]}) \right]\left(A_1(t),\ldots, A_m(t)\right)
\end{align*}
and, similarly,
\begin{align*}
& \left[\Gamma^{(V)^{m+1}}(f^{[m]})\right]\left(\widetilde{U_j}^{(l_1)}(0),\ldots,\widetilde{U_j}^{(l_m)}(t)\right) - \left[\Gamma^{(V)^{m+1}}(f^{[m]})\right]\left(\widetilde{U_j}^{(l_1)}(t),\ldots,\widetilde{U_j}^{(l_m)}(t)\right) \\
& = \sum_{\substack{B_i^j(t) = \widetilde{U_j}^{(l_i)}(0) \ \text{or} \ t R^j_{{l_i}}(t) \\ \exists \, 1 \leq i \leq m, \ B_i^j(t) = t R^j_{{l_i}}(t)}} \left[\Gamma^{(V)^{m+1}}(f^{[m]}) \right]\left(B_1^j(t),\ldots, B_m^j(t)\right).
\end{align*}
Hence, we only have to estimate the terms
$$
\left[\Gamma^{(V)^{m+1}}(f^{[m]}) \right]\left(A_1(t),\ldots, A_m(t)\right) - \left[\Gamma^{(V)^{m+1}}(f^{[m]}) \right]\left(B_1^j(t),\ldots, B_m^j(t)\right)
$$
where $A_i(t)=\tilde{U}^{(l_i)}(0)$ if and only if $B_i^j(t) = \widetilde{U_j}^{(l_1)}(0)$. Moreover, to simplify the notations, we assume that $A_1(t) = t R_{{l_1}}(t)$ and $B_1^j(t) = t R^j_{{l_1}}(t)$. In that case,
\begin{align*}
& \left[\Gamma^{(V)^{m+1}}(f^{[m]}) \right]\left(A_1(t),\ldots, A_m(t)\right) - \left[\Gamma^{(V)^{m+1}}(f^{[m]}) \right]\left(B_1(t),\ldots, B_m(t)\right) \\
& = t \left( \left[\Gamma^{(V)^{m+1}}(f^{[m]}) \right]\left(R_{{l_1}}(t), A_2(t), \ldots, A_m(t)\right) - \left[\Gamma^{(V)^{m+1}}(f^{[m]}) \right]\left(R^j_{{l_1}}(t), B_2^j(t),\ldots, B_m^j(t)\right) \right).
\end{align*}
According to Lemma \ref{intermlemma} $(4)$ and $(5)$, there exists an integer $J_3'$ and $\gamma'>0$ such that, for every $2\leq i \leq m$,
$$
\forall \, j\geq J_3', \, \forall |t| < \gamma' , \ \| R_{{l_1}}(t) - R^j_{{l_1}}(t)\| \leq \epsilon \quad \text{and} \quad \| A_i(t) - B_i^j(t) \|_p \leq \epsilon.
$$
It follows that there exists a constant $C'>0$ such that
$$
\left\|
\left[\Gamma^{(V)^{m+1}}(f^{[m]}) \right]\left(A_1(t),\ldots, A_m(t)\right) - \left[\Gamma^{(V)^{m+1}}(f^{[m]}) \right]\left(B_1^j(t),\ldots, B_m^j(t)\right) \right\|_p \leq C'  \epsilon |t|.
$$
In particular, there exist an integer $J^3$, $\gamma>0$ and a constant $c_3 > 0$ such that
\begin{equation}\label{estimate9}
\forall \, j\geq J_3, \, \forall |t| < \gamma, \ \|S_{3,j}(t)\|_p \leq c_3 \epsilon |t|.
\end{equation}
Setting $ c = c_1+c_2+c_3$, $\delta = \min \{ \alpha, \beta, \gamma \}$ and $J:= \max \{ J_1, J_2, J_3 \}$, we get from \eqref{estimate7}, \eqref{estimate8} and \eqref{estimate9} that
\begin{equation}\label{estimate10}
\forall \, j\geq J, \, \forall |t| < \delta, \ \| R_j(t) \|_p \leq c \epsilon |t|.
\end{equation}
From the expression of $L_j(t)$, it follows that there exist $J'\in \mathbb{N}$, $\delta'>0$ and a constant $c' > 0$ such that
\begin{align}\label{estimate4}
\forall \, j\geq J', \, \forall |t| < \delta', \ \| L_j(t) \|_p \leq c' \epsilon |t|.
\end{align}

\noindent \underline{Conclusion}. Fix an integer $j_0 \geq \max \{J_0, J'  \}$. According to \eqref{diffphim}, there exists $\delta''>0$ such that
$$
\forall \, |t| < \delta'', \ \| \varphi_{j_0}^{(n-1)}(t) - \varphi_{j_0}^{(n-1)}(0) - t\varphi_{j_0}^{(n)}(0) \|_p \leq \epsilon |t|.
$$
According to \eqref{estimate3} and \eqref{estimate4}, we get from the equality \eqref{reformulate1} that, for every $t\in I$ such that $|t| < \min \{ \delta', \delta'' \}$,
\begin{align*}
\|\varphi^{(n-1)}(t) - \varphi^{(n-1)}(0) - tT\|_p \leq (c'+c_0+1) \epsilon |t|.
\end{align*}
Hence, we proved that
$$
\varphi^{(n-1)}(t) - \varphi^{(n-1)}(0) - tT = o(t),
$$
which shows that $\varphi^{(n-1)}$ is differentiable at $t=0$ with $\varphi^{(n)}(0) = T$, and finishes the proof.
\end{proof}

\pagebreak

We conclude this paper by proving Lemma \ref{intermlemma}.

\begin{proof}[Proof of Lemma \ref{intermlemma}] Throughout this proof, we fix $\epsilon > 0$.\\
- To prove $(1)$, note that by Duhamel's formula (see, e.g., \cite[Lemma 5.2]{AzCaDoSu09}), we have
\begin{align*}
\left\| e^{i\mathcal{A}(t)}V - e^{i\mathcal{A}_j(t)}V \right\|_p = \left\| e^{i\mathcal{A}(t)} - e^{i\mathcal{A}_j(t)} \right\|_p \leq \| \mathcal{A}(t) - \mathcal{A}_j(t) \|_p.
\end{align*}
Recall that $\mathcal{A}(0)=0$, so we can write $\mathcal{A}(t)=t\mathcal{A}'(0)+o(t)$ as $t\to 0$, and hence $\mathcal{A}_j(t) = tP_j \mathcal{A}'(0)P_j + P_jo(t) P_j$. It follows that
\begin{align*}
\| \mathcal{A}(t) - \mathcal{A}_j(t) \|_p \leq |t| \| \mathcal{A}'(0) - P_j \mathcal{A}'(0) P_j \|_p + \| o(t) - P_j o(t) P_j\|_p.
\end{align*}
Since $ \mathcal{A}'(0) \in \mathcal{S}^p(\hilh)$, for $j$ large enough, we have
$$
\| \mathcal{A}'(0) - P_j \mathcal{A}'(0) P_j \|_p \leq \epsilon,
$$
and for $|t|$ small enough, we have
$$
\| o(t) - P_j o(t) P_j\|_p \leq 2 \|o(t)\|_p \leq \epsilon |t|,
$$
which gives the desired inequality.

- The proof of $(2)$ is similar. Indeed, it suffices to write
\begin{align*}
\left\| e^{i\mathcal{A}(t)}V - V \right\|_p = \left\| e^{i\mathcal{A}(t)} - e^0 \right\|_p \leq \| \mathcal{A}(t) \|_p = |t| \left\| \frac{\mathcal{A}(t)}{t} \right\|_p \leq C|t|,
\end{align*}
where $C:=2\| \mathcal{A}'(0) \|_p$, for $t$ small enough. The proof of the second inequality is identical.

- For the rest of the proof, we let $g : t\mapsto e^{it}$. Then, we can write
$$
\tilde{U}(t) = \left[g(\mathcal{A}(t)) - g(\mathcal{A}(0))\right]V \quad \text{and} \quad \widetilde{U_j}(t) = \left[g(\mathcal{A}_j(t)) - g(\mathcal{A}_j(0))\right]VP_j.
$$
Since $g \in C^{\infty}(\mathbb{R})$ with bounded derivatives, by Corollary \ref{FormulaSA3}, $\tilde{U}$ and $\widetilde{U_j}$ are $n$-times differentiable on $\mathbb{R}$ and for every $1\leq k \leq n$ and every $t\in \mathbb{R}$,
\begin{align*}
\tilde{U}^{(k)}(t) = \left(\sum_{m=1}^{k} \sum_{\substack{l_1,\ldots,l_m\ge 1 \\ l_1+\cdots+l_m=k}}\dfrac{k!}{l_1!\cdots l_m!} \underbrace{\left[\Gamma^{(\mathcal{A}(t))^{m+1}}(g^{[m]})\right]\left(\mathcal{A}^{(l_1)}(t),\ldots,\mathcal{A}^{(l_m)}(t)\right)}_{:=D_{l_1,\ldots,l_m}(t)} \right) V,
\end{align*}
and
\begin{align*}
\widetilde{U_j}^{(k)}(t)
& = \left(\sum_{m=1}^{k} \sum_{\substack{l_1,\ldots,l_m\ge 1 \\ l_1+\cdots+l_m=k}}\dfrac{k!}{l_1!\cdots l_m!}\left[\Gamma^{(\mathcal{A}_j(t))^{m+1}}(g^{[m]})\right]\left((\mathcal{A}_j)^{(l_1)}(t),\ldots,(\mathcal{A}_j)^{(l_m)}(t)\right) \right) VP_j \\
& = \left(\sum_{m=1}^{k} \sum_{\substack{l_1,\ldots,l_m\ge 1 \\ l_1+\cdots+l_m=k}}\dfrac{k!}{l_1!\cdots l_m!} \underbrace{\left[\Gamma^{(\mathcal{A}_j(t))^{m+1}}(g^{[m]})\right]\left(P_j\mathcal{A}^{(l_1)}(t)P_j,\ldots,P_j\mathcal{A}^{(l_m)}(t)P_j\right)}_{:=D^j_{l_1,\ldots,l_m}(t)}\right) VP_j.
\end{align*}
To complete the proof of $(3)$, note that for every $t\in \mathbb{R}$,
$$
P_j\widetilde{U_j}(t)P_j = P_j(e^{i\mathcal{A}_j(t)}VP_j - VP_j)P_j = e^{i\mathcal{A}_j(t)}VP_j - VP_j = \widetilde{U_j}(t),
$$
which follows from the fact that $P_j$ commute with $e^{i\mathcal{A}_j(t)}$ and $V$. Hence, differentiating this formula $k$ times gives the result.

- Next, according to the latter, to prove $(4)$, we only have to estimate
$$
\| D_{l_1,\ldots,l_m}(t)V - D^j_{l_1,\ldots,l_m}(t)VP_j \|_p
$$
for some $1\leq m \leq n-1$ and $l_1, \ldots, l_m \geq 1$ such that $l_1+\cdots+l_m \leq n-1$. But it is easy to check that
$$D^j_{l_1,\ldots,l_m}(t)VP_j = D^j_{l_1,\ldots,l_m}(t)P_j V = D^j_{l_1,\ldots,l_m}(t)V,$$
so that
\begin{equation}\label{Lemmeetapesimpl}
\| D_{l_1,\ldots,l_m}(t)V - D^j_{l_1,\ldots,l_m}(t)VP_j \|_p = \| D_{l_1,\ldots,l_m}(t) - D^j_{l_1,\ldots,l_m}(t) \|_p.
\end{equation}
Denote $T_t:=\left[\Gamma^{(\mathcal{A}(t))^{m+1}}(g^{[m]})\right]$ and $T_{t,j} := \left[\Gamma^{(\mathcal{A}_j(t))^{m+1}}(g^{[m]})\right]$. We have
\begin{align*}
& D_{l_1,\ldots,l_m}(t) - D^j_{l_1,\ldots,l_m}(t) \\
& \hspace*{0,5cm} = \left( T_t\left(\mathcal{A}^{(l_1)}(t),\ldots,\mathcal{A}^{(l_m)}(t)\right) - T_t\left(\mathcal{A}^{(l_1)}(0),\ldots,\mathcal{A}^{(l_m)}(0)\right) \right) \\
& \hspace*{1cm} + \left( T_t\left(\mathcal{A}^{(l_1)}(0),\ldots,\mathcal{A}^{(l_m)}(0)\right) - T_t\left(P_j\mathcal{A}^{(l_1)}(0)P_j,\ldots,P_j\mathcal{A}^{(l_m)}(0)P_j\right)  \right) \\
& \hspace*{1cm} + \left( T_t\left(P_j\mathcal{A}^{(l_1)}(0)P_j,\ldots,P_j\mathcal{A}^{(l_m)}(0)P_j\right)  - T_{t,j}\left(P_j\mathcal{A}^{(l_1)}(0)P_j,\ldots,P_j\mathcal{A}^{(l_m)}(0)P_j\right)  \right) \\
& \hspace*{1cm} + \left( T_{t,j}\left(P_j\mathcal{A}^{(l_1)}(0)P_j,\ldots,P_j\mathcal{A}^{(l_m)}(0)P_j\right)  - T_{t,j}\left(P_j\mathcal{A}^{(l_1)}(t)P_j,\ldots,P_j\mathcal{A}^{(l_m)}(t)P_j\right)  \right) \\
& \hspace*{1cm} := K_1(t)+K_{2,j}(t)+K_{3,j}(t)+K_{4,j}(t).
\end{align*}
The continuity of $\mathcal{A}^{(l_k)}$ at $t=0$, $1\leq k \leq m$, and the uniform boundedness of $(T_t)_{t\in \mathbb{R}}$ give the existence of $C_1$ (depending on $f$, $\mathcal{A}$ and $p$) and $\alpha_1>0$ such that
$$
\forall \, |t| < \alpha_1, \ \|K_1(t)\|_p \leq C_1\max_{1\leq k \leq m} \|\mathcal{A}^{(l_k)}(t) - \mathcal{A}^{(l_k)}(0) \|_p \leq \epsilon.
$$
To estimate $K_{2,j}(t)$, it is enough to notice that since $\mathcal{A}^{(l_k)}(0) \in \mathcal{S}^p(\hilh)$, $1\leq k \leq m$,
$$
P_j\mathcal{A}^{(l_k)}(0)P_j \to \mathcal{A}^{(l_k)}(0),
$$
in $\mathcal{S}^p$ as $j\to +\infty$, so that
$$
\|K_{2,j}(t)\| \leq C_1 \max_{1\leq k \leq m} \| \mathcal{A}^{(l_k)}(0) - P_j\mathcal{A}^{(l_k)}(0)P_j \|_p \leq \epsilon
$$
for $j\geq J$ large enough. For the third term, there exists, by Corollary \ref{PerturbationFormula1}, a constant $C_2$ (depending on $f$ and $p$) such that
\begin{align*}
\|K_{3,j}(t) \|_p \leq C_2 \| \mathcal{A}(t) - \mathcal{A}_j(t) \|_p \|P_j\mathcal{A}^{(l_1)}(0)P_j\| \cdots \|P_j\mathcal{A}^{(l_m)}(0)P_j \|_p
& \leq \epsilon
\end{align*}
for $j$ large enough and $|t| < \alpha_2$ small enough, according to the proof of $(1)$. Since $K_{4,j}(t)$ can be estimated like $K_1(t)$, this concludes the proof of $(4)$.

- Finally, to prove $(5)$, write, for $0\leq k \leq n-1$ and $t\neq 0$,
$$
R_{k}(t) := \frac{\tilde{U}^{(k)}(t) - \tilde{U}^{(k)}(0)}{t} \quad \text{and} \quad R^j_{{k}}(t) := \frac{\widetilde{U_j}^{(k)}(t) - \widetilde{U_j}^{(k)}(0)}{t},
$$
so that
$$
\tilde{U}^{(k)}(t) = \tilde{U}^{(k)}(0) + t R_{k}(t) \quad \text{and} \quad \widetilde{U_j}^{(k)}(t) = \widetilde{U_j}^{(k)}(0) + t R^j_{{k}}(t).
$$
Using the same notations as before, it follows from the expressions of $\tilde{U}^{(k)}(t)$ and $\widetilde{U_j}^{(k)}(t)$ and from \eqref{Lemmeetapesimpl}, that to estimate
$$\| R_{k}(t) - R^j_{{k}}(t) \|_p,$$
it suffices to estimate the quantity 
$$
\frac{1}{|t|}\| (D_{l_1,\ldots,l_m}(t) - D_{l_1,\ldots,l_m}(0)) - (D^j_{l_1,\ldots,l_m}(t) - D^j_{l_1,\ldots,l_m}(0)) \|_p,
$$
for some $1\leq m \leq n-1$ and $l_1, \ldots, l_m \geq 1$ such that $l_1+\cdots+l_m \leq n-1$. To do so, let us write, with the notations $T_t$ and $T_{t,j}$ introduced above,
\begin{align*}
& \left((D_{l_1,\ldots,l_m}(t) - D_{l_1,\ldots,l_m}(0)) - (D^j_{l_1,\ldots,l_m}(t) - D^j_{l_1,\ldots,l_m}(0)) \right)\\
& = \left((T_t-T_{t,j})(P_j\mathcal{A}^{(l_1)}(t)P_j,\ldots,P_j\mathcal{A}^{(l_m)}(t)P_j)\right)\\
& \hspace*{0,5cm} + \left((T_t-T_0)(\mathcal{A}^{(l_1)}(t),\ldots,\mathcal{A}^{(l_m)}(t)) - (T_t-T_0)(P_j\mathcal{A}^{(l_1)}(t)P_j,\ldots,P_j\mathcal{A}^{(l_m)}(t)P_j)\right)\\
& \hspace*{0,5cm} + \left[T_0(P_j\mathcal{A}^{(l_1)}(0)P_j,\ldots,P_j\mathcal{A}^{(l_m)}(0)P_j) - T_0(P_j\mathcal{A}^{(l_1)}(t)P_j,\ldots,P_j\mathcal{A}^{(l_m)}(t)P_j)\right. \\
& \hspace*{1cm} + \left. T_0(\mathcal{A}^{(l_1)}(t),\ldots,\mathcal{A}^{(l_m)}(t)) - T_0(\mathcal{A}^{(l_1)}(0),\ldots,\mathcal{A}^{(l_m)}(0))\right]
\end{align*}
Denote by $L_1^j(t)$ and $L_2^j(t)$ the quantities on the first two lines in the last equality, and by $L_3^j(t)$ the quantity on the last two lines.

For $L_1^j$, by the boundedness of $\mathcal{A}^{(l_k)}(t)$, $1\leq k \leq m$, in a neighborhood of $0$ and by Corollary \ref{PerturbationFormula1}, we have the existence of $D_1>0$ such that
$$
\| L_1^j(t) \|_p \leq D_1 \| \mathcal{A}_j(t) - \mathcal{A}(t) \|_p \leq \epsilon |t|
$$
for $j$ large enough and $|t|$ small enough, according to the item $(1)$.

For the term $L_2^j$, Corollary \ref{PerturbationFormula1} gives the existence of $D_2>0$ such that
$$
\|T_t-T_0|\|_{\mathcal{B}_m(\mathcal{S}^p(\hilh)} \leq D_2 \| \mathcal{A}(t)\|_p \leq D_2' |t|
$$
for $t$ small enough and for some constant $D_2'$. Using again the boundedness $\mathcal{A}^{(l_k)}(t)$, $1\leq k \leq m$, in a neighborhood of $0$, we get the existence of $D_2''$ such that
$$
\| L_2^j(t) \|_p \leq D_2'' |t| \max_{1\leq k \leq m} \| \mathcal{A}^{(l_k)}(t) - P_j\mathcal{A}^{(l_k)}(t)P_j \|_p \leq \epsilon |t|,
$$
where the last inequality is obtained by writing $\mathcal{A}^{(l_k)}(t) =  \mathcal{A}^{(l_k)}(0) + t \mathcal{A}^{(l_k+1)}(0) + o(t)$ and applying the same computations as in the item $(1)$.

 Finally, let us write, for each $1\leq k\leq m$,
 $$
  \mathcal{A}^{(l_k)}(t) =  \mathcal{A}^{(l_k)}(0) + t \frac{ \mathcal{A}^{(l_k)}(t) -  \mathcal{A}^{(l_k)}(0)}{t} =: \mathcal{A}^{(l_k)}(0) + t G_k(t),
 $$
and
$$
P_j\mathcal{A}^{(l_k)}(t)P_j = P_j\mathcal{A}^{(l_k)}(0)P_j + t P_jG_k(t)P_j.
$$
We have
\begin{align*}
\| G_k(t) - P_jG_k(t)P_j \|_p
& \leq \left\| \frac{ \mathcal{A}^{(l_k)}(t) -  \mathcal{A}^{(l_k)}(0)}{t} - \mathcal{A}^{(l_k+1)}(0) \right\|_p + \| \mathcal{A}^{(l_k+1)}(0) - P_j \mathcal{A}^{(l_k+1)}(0) P_j \|_p \\
& \hspace*{0,5cm} + \left\| P_j\frac{\mathcal{A}^{(l_k)}(t) -  \mathcal{A}^{(l_k)}(0)}{t}P_j - P_j\mathcal{A}^{(l_k+1)}(0)P_j \right\|_p \\
& \leq 2 \left\| \frac{ \mathcal{A}^{(l_k)}(t) -  \mathcal{A}^{(l_k)}(0)}{t} - \mathcal{A}^{(l_k+1)}(0) \right\|_p + \| \mathcal{A}^{(l_k+1)}(0) - P_j \mathcal{A}^{(l_k+1)}(0) P_j \|_p\\
& \leq \epsilon,
\end{align*}
for $j$ large enough and $t$ small enough. Now, following the same computations used to estimate the term $S_{3,j}(t)$ in the proof of Theorem \ref{main}, we obtain, taking larger $j$ and smaller $|t|$ if necessary, the estimate
$$
\| L_3^j(t)\| \leq \epsilon |t|.
$$
In particular, we proved that there exist $J\in \mathbb{N}$ and $\alpha>0$ such that
$$
\forall \, j\geq J, \, \forall \, |t| < \alpha, \ \| R_{k}(t) - R^j_{{k}}(t) \|_p \leq \epsilon.
$$
This concludes the proof of the Lemma.
\end{proof}

\end{document}